\DeclareMathSymbol{\shortminus}{\mathbin}{AMSa}{"39} 
\tikzset{%
    symbol/.style={%
        draw=none,
        every to/.append style={%
            edge node={node [sloped, allow upside down, auto=false]{$#1$}}}
    }
}
\newcommand{\Ab}{\ensuremath{\mathrm{\textbf{Ab}}}}
\newcommand{\Set}{\ensuremath{\mathrm{\textbf{Set}}}}
\newcommand{\Ch}{\mathbf{Ch}_{\geq 0}} 
\newcommand{\Cat}{\mathbf{Cat}} 
\renewcommand{\C}{\ensuremath{\mathcal{C}}} 
\newcommand{\D}{\ensuremath{\mathcal{D}}} 
\newcommand{\W}{\ensuremath{\mathcal{W}}} 
\newcommand{\sD}{\ensuremath{\mathbb{D}}} 
\newcommand{\sH}{\ensuremath{\mathbb{H}}} 
\newcommand{\sS}{\ensuremath{\mathbb{S}}} 
\newcommand{\Hom}{\ensuremath{\mathrm{Hom}}}
\newcommand{\Ob}{\ensuremath{\mathrm{Ob}}}
\newcommand{\Psh}[1]{\ensuremath{\widehat{#1}}} 
\def\colim{\mathop{\mathrm{colim}}} 
\def\hocolim{\mathop{\mathrm{hocolim}}} 
\newcommand{\LL}{\ensuremath{\mathbb{L}}} 
\newcommand{\RR}{\ensuremath{\mathbb{R}}} 
\newcommand{\Fib}{\ensuremath{\mathrm{Fib}}} 
\newcommand{\Cof}{\ensuremath{\mathrm{Cof}}} 
\newcommand{\Ho}{\ensuremath{\mathrm{Ho}}} 
\newcommand{\nbd}{\nobreakdash-\hspace{0pt}} 
\newcommand{\Or}{\ensuremath{\mathcal{O}}} 
\theoremstyle{plain}
\newtheorem{theorem}{Theorem}[section]
\newtheorem{lemma}[theorem]{Lemma}
\newtheorem{proposition}[theorem]{Proposition}
\theoremstyle{definition}
\newtheorem{definition}[theorem]{Definition}
\newtheorem{remark}[theorem]{Remark}
\newtheorem{remarksub}{Remark}[theorem] 
\newtheorem{paragr}[theorem]{}
\theoremstyle{plain}
\newtheorem*{theorem*}{Theorem} 
\newtheorem{theoremintro}{Theorem} 
\theoremstyle{definition}
\newtheorem*{definition*}{Definition} 
\newtheorem*{nb*}{NB} 
\title{Homology of categories via polygraphic resolutions}
\author{L\'eonard Guetta}
\email{guetta@irif.fr}
\date{\today}
\address{Université de Paris, CNRS, IRIF, F-75006, Paris, France}
\begin{document}
\keywords{Homology, Category, Polygraph, Computad}
\subjclass[2020]{18N30,18G99}
\begin{abstract}
In this paper, we prove that the polygraphic homology of a small category, defined in terms of polygraphic resolutions in the category $\omega\Cat$ of strict $\omega$\nbd{}categories, is naturally isomorphic to the homology of its nerve, thereby extending a result of Lafont and Métayer. Along the way, we investigate homotopy colimits with respect to the Folk model structure and deduce a theorem which formally resembles Quillen's Theorem A.
\end{abstract}
\maketitle

\section*{Introduction}
In \cite{street1987algebra}, Street defines a nerve functor
\[N_{\omega} : \omega\Cat \to \Psh{\Delta}\]
from the category of strict $\omega$\nbd{}categories (that we shall simply call \emph{$\omega$\nbd{}categories}) to the category of simplicial sets. This functor can be used to transfer the homotopy theory of simplicial sets to $\omega$\nbd{}categories as it is done in the articles \cite{ara2014vers,ara2018theoreme,gagna2018strict,ara2019quillen,ara2020comparaison,ara2020theoreme,ara2016joint}. In particular, we can set the following definition:
\begin{definition*}
  Let $C$ be an $\omega$\nbd{}category and $k \in \mathbb{N}$. The $k$\nbd{}th homology group $H_k(C)$ of $C$ is the $k$\nbd{}th homology group of its nerve $N_{\omega}(C)$.
\end{definition*}

On the other hand, in \cite{metayer2003resolutions} Métayer defines other homological invariants for $\omega$\nbd{}categories, which we call here \emph{polygraphic homology groups}. The definition of these homology groups is based on the notion of \emph{free $\omega$\nbd{}category on a polygraph}, also known as \emph{free $\omega$\nbd{}category on a computad}, which are $\omega$\nobreakdash-categories obtained recursively from the empty $\omega$\nbd{}category by freely adjoining cells. From now on, we will simply say \emph{free $\omega$\nbd{}category}.

Métayer observed \cite[Definition 4.1 and Proposition 4.3]{metayer2003resolutions} that every $\omega$\nbd{}category $C$ admits a \emph{polygraphic resolution}, that is an arrow
\[
u : P \to C
\]
of $\omega\Cat$, such that $P$ is a free $\omega$\nbd{}category and $u$ satisfies some properties bearing formal similarities with trivial fibrations of topological spaces (or of simplicial sets). Moreover, every free $\omega$\nbd{}category $P$ can be ``linearized'' to a chain complex $\lambda(P)$ and Métayer proved \cite[Theorem 6.1]{metayer2003resolutions} that given $P \to C$ and $P' \to C$ two polygraphic resolutions of the same $\omega$\nbd{}category, the chain complexes $\lambda(P)$ and $\lambda(P')$ have the same homology groups. This leads to the following definition:
\begin{definition*}
Let $C$ be an $\omega$\nbd{}category and $k\in\mathbb{N}$. The $k$\nbd{}th polygraphic homology group $H_k^{\mathrm{pol}}(C)$ of $C$ is the $k$\nbd{}th homology group of $\lambda(P)$ for any polygraphic resolution $P \to C$. 
\end{definition*}
In this article, we prove the following theorem:
\begin{theoremintro}\label{maintheoremintro}
  Let $C$ be a small category. For every $k \in \mathbb{N}$, we have
  \[
  H_k(C) \simeq H_k^{\mathrm{pol}}(C).
  \]
\end{theoremintro}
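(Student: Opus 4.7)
The strategy I would pursue is to exhibit a natural polygraphic resolution of $C$ whose linearization is exactly the normalized chain complex of $N(C)$; the desired isomorphism then drops out. The natural candidate is the counit $\epsilon_C : c_{\omega}(N(C)) \to C$ of Street's adjunction $c_{\omega} \dashv N_{\omega}$, where $c_{\omega} : \Psh{\Delta} \to \omega\Cat$ is the left Kan extension of the orientals $\mathcal{O}_n$ along the Yoneda embedding. Since $N_{\omega}(C) = N(C)$ for a $1$\nbd{}category, once $\epsilon_C$ is known to be a polygraphic resolution we only need to identify $\lambda(c_{\omega}(N(C)))$ with the nerve chain complex to conclude.

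The first (combinatorial) step is to check that $c_{\omega}(N(C))$ is a free $\omega$\nbd{}category: because each oriental is itself free (after Street) and $c_{\omega}(N(C))$ is obtained by successively attaching orientals along their boundaries following the skeletal filtration of $N(C)$, one obtains a polygraph whose generating $k$\nbd{}cells are in bijection with the non\nbd{}degenerate $k$\nbd{}simplices of $N(C)$. From the definition of $\lambda$ on polygraphs, together with the identification of $\lambda(\mathcal{O}_n)$ with the standard $n$\nbd{}simplex chain complex, it then follows that $\lambda(c_{\omega}(N(C)))$ is precisely the normalized chain complex of $N(C)$, whose homology is $H_*(N(C)) = H_*(C)$.

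The main obstacle, and the step where the Quillen Theorem~A-like theorem advertised in the introduction should enter, is showing that $\epsilon_C$ is a Folk weak equivalence. My plan is to reduce this to verifying that certain slices or homotopy fibers of $\epsilon_C$ over each cell of $C$ are Folk\nbd{}contractible, and then to check this condition using the combinatorics of orientals together with the fact that $N_{\omega}(\mathcal{O}_n) = \Delta^n$ is contractible. The delicate point is arranging the infrastructure on homotopy colimits in the Folk model structure so that a clean Theorem~A-type statement is available and applies to the comparison between $\Psh{\Delta}$ with its Thomason homotopy theory and $\omega\Cat$ with the Folk one; once that is in place, identifying $\epsilon_C$ as a Folk weak equivalence is essentially formal, and combining it with cofibrancy of $c_{\omega}(N(C))$ and the linearization computation above yields the theorem.
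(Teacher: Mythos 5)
Your first two steps are sound: $c_{\omega}(N_{\omega}(C))$ is indeed a free $\omega$\nbd{}category whose generating $k$\nbd{}cells correspond to the non\nbd{}degenerate $k$\nbd{}simplices of the nerve (because $c_{\omega}$ sends the inclusions $\partial\Delta_n \to \Delta_n$ to folk cofibrations), and $\lambda c_{\omega} \simeq \kappa$, so $\lambda(c_{\omega}(N_{\omega}(C)))$ is the normalized chain complex of $N_{\omega}(C)$. The gap is in the third step, and it is not one that more Theorem~A infrastructure can close: the counit $\epsilon_C : c_{\omega}(N_{\omega}(C)) \to C$ is in general \emph{not} an equivalence of $\omega$\nbd{}categories, hence neither a folk weak equivalence nor a polygraphic resolution. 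Already for $C = [2]$ one has $c_{\omega}(N_{\omega}([2])) = \Or_2$, and the counit $\Or_2 \to [2]$ collapses the generating $2$\nbd{}cell $\langle 012 \rangle : \langle 02 \rangle \Rightarrow \langle 12 \rangle \ast_0 \langle 01 \rangle$ to an identity. Take the parallel $1$\nbd{}cells $x = \langle 12 \rangle \ast_0 \langle 01 \rangle$ and $y = \langle 02 \rangle$ of $\Or_2$, which have the same image in $[2]$, and let $\beta$ be the identity $2$\nbd{}cell on that image. The definition of equivalence of $\omega$\nbd{}categories demands a $2$\nbd{}cell $\alpha : x \to y$ of $\Or_2$, but in the free $2$\nbd{}category $\Or_2$ the only non\nbd{}identity $2$\nbd{}cell goes from $\langle 02 \rangle$ to $\langle 12 \rangle \ast_0 \langle 01 \rangle$, never the other way; so no such $\alpha$ exists and $\epsilon_{[2]}$ is not a folk weak equivalence. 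What is true is that $\epsilon_C$ is a \emph{Thomason} equivalence (its nerve is a simplicial weak equivalence), but the entire difficulty of the theorem lies in the fact that Thomason equivalences need not be folk equivalences --- if the counit were always a folk equivalence, polygraphic homology would coincide with nerve homology for \emph{every} $\omega$\nbd{}category, contradicting the $(\mathbb{N},+)$ counterexample recalled in the introduction. For the same reason your proposed reduction fails at the verification stage: it would require the slices $c_{\omega}(N_{\omega}(C))/c$ to be folk contractible, which they are not.

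The paper's route is structured precisely to avoid asking for a folk resolution built out of the nerve. It writes $C \simeq \colim_{c \in C} C/c$, shows this colimit is a folk \emph{homotopy} colimit (Theorem \ref{thm:homcolfolk}, via projective cofibrancy of the diagram $c \mapsto P/c$ for an honest polygraphic resolution $P \to C$), computes $\sH^{\mathrm{pol}}(C/c) \simeq \mathbb{Z}$ from the terminal object of $C/c$, and only then invokes the Thomason side, through Thomason's homotopy colimit theorem applied to the single diagram $c \mapsto C/c$. The Thomason/folk comparison is thus confined to one diagram whose two homotopy colimits can both be identified with $C$, rather than being asserted as a global statement about the counit.
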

For the statement of this theorem to make sense, we have to consider small categories as particular cases of $\omega$\nbd{}categories. Namely, a (small) category can be defined as an $\omega$\nbd{}category with only trivial cells in dimension greater than~$1$. Beware that this last property doesn't imply that the previous theorem is trivial: given  $P \to C$, a polygraphic resolution of a small category $C$, $P$ need \emph{not} have only trivial cells in dimension greater than $1$. 

The restriction of the previous theorem to the case of monoids seen as small
categories is exactly Corollary 3 of \cite[Section 3.4]{lafont2009polygraphic}.
As such, Theorem \ref{maintheoremintro} is only a small generalization of Lafont
and Métayer's result. However, the novelty lies in the new proof we give, which
is more conceptual than the one in \emph{loc.~cit.}

Note also that the actual result we shall obtain in this article (Theorem \ref{maintheorem}) is more precise than Theorem $1$. The first reason is that the homology of an $\omega$\nbd{}category (polygraphic or of the nerve) will be considered as a chain complex up to quasi-isomorphism and not only a sequence of abelian groups. The second and more important reason is that we will prove that the polygraphic homology and homology of the nerve of a small category are \emph{naturally} isomorphic and even explicitly construct the natural isomorphism. This last point was not addressed at all in \cite{lafont2009polygraphic}.

We shall now give a sketch of the proof of Theorem \ref{maintheoremintro}. It is slightly simpler than the proof of Theorem \ref{maintheorem} but has the same key ingredients. The simplification mainly results from avoiding questions of naturality.

The first step is to give a more abstract definition of the polygraphic homology. By a variation of the Dold--Kan equivalence (see for example \cite{bourn1990another}), the category $\omega\Cat(\Ab)$ of $\omega$\nbd{}categories internal to abelian groups is equivalent to the category $\Ch$ of chain complexes in non-negative degree. Thus, we have a forgetful functor
\[
\Ch \simeq \omega\Cat(\Ab) \to \omega\Cat,
\]
which has a left adjoint $\lambda : \omega\Cat \to \Ch$. Moreover, when $X$ is a
free $\omega$\nbd{}category, $\lambda(X)$ is exactly the linearization of $X$
considered in the definition of polygraphic homology by Métayer. Now,
$\omega\Cat$ admits a model structure, commonly referred to as the \emph{Folk
  model structure} \cite{lafont2010folk}, with the equivalences of
$\omega$\nbd{}categories (a generalization of the usual notion of equivalence of
categories) as weak equivalences and the free $\omega$\nbd{}categories as cofibrant objects \cite{metayer2008cofibrant}. As it turns out, if we equip $\omega\Cat$ with the Folk model structure and $\Ch$ with the projective model structure, then $\lambda$ is a left Quillen functor and hence admits a left derived functor
\[
\LL \lambda : \Ho(\omega\Cat) \to \Ho(\Ch).
\]
The polygraphic homology groups of an $\omega$\nbd{}category $X$ are exactly the homology groups of $\LL \lambda (X)$.\footnote{The description of polygraphic homology as a left derived functor has been around in the folklore for quite some time and I claim no originality for this result. For example, it will appear in \cite{polybook}.} We shall now simply write $H^{\mathrm{pol}}(X) := \LL \lambda(X)$.

Recall now that for every (small) category $C$, we have a canonical isomorphism
\[
\colim_{c \in C}C/c \simeq C,
\]
where $C/c$ is the slice over $c$. The realization that this colimit is a homotopy colimit with respect to the weak equivalences of the Folk model structure (Theorem \ref{thm:homcolfolk}) is at the origin of this paper. Since $\LL \lambda$ is the left derived functor of a left Quillen functor, it commutes with homotopy colimits. In particular, we have:
\[
H^{\mathrm{pol}}(C) \simeq \hocolim_{c \in C}H^{\mathrm{pol}} (C/c).
\]
Then, we can show that the polygraphic homology of a small category with a final
object is isomorphic to (the homology of) $\mathbb{Z}$ concentrated in
degree~$0$ (Lemma \ref{lemma:terminalobject} and Proposition \ref{prop:comparisoncontractible}). Hence, we have
\[
H^{\mathrm{pol}}(C) \simeq \hocolim_{c \in C}\mathbb{Z}.
\]
We conclude by remarking that the right-hand side of the previous equation is nothing but the homology of the nerve of $C$ (see for example \cite[Appendix II, Proposition 3.3]{gabriel1967calculus} or \cite[Section 1]{quillen1973higher}). Note that when $C$ is a monoid $M$, the category of functors $M \to \Ch$ is isomorphic to the category $\Ch(M)$ of chain complexes of left $\mathbb{Z}M$-modules and the $\colim_M$ functor can be identified with the functor
\[
\shortminus \bigotimes_{\mathbb{Z}M}\mathbb{Z} : \Ch(M) \to \Ch.
\]
Hence, in that case we also recover the definition of homology of a monoid in terms of $\mathrm{Tor}$ functors.

Let us end this introduction by mentioning that this paper is part of an ongoing program carried out by the author, which aims at understanding for which $\omega$\nbd{}categories $C$ the following holds:

\[
H^{\mathrm{pol}}_k(C) \simeq H_k(C) \text{ for every }k\geq 0.
\]

Theorem \ref{maintheoremintro} may lead us to think that all $\omega$\nbd{}categories satisfy this property but a counter-example discovered by Ara and Maltsiniotis shows that it is not the case: Let $C$ be the commutative monoid $(\mathbb{N},+)$ considered as a $2$\nobreakdash -category with only one object and no non-trivial cells in dimension $1$. This $2$-category is free (as an $\omega$\nbd{}category) and a quick computation shows that
\[
H^{\mathrm{pol}}_k(C) \simeq
\begin{cases}
  \mathbb{Z} \text{ for } k=0, 2\\
  0 \text{ otherwise. }
\end{cases}
\]
But, as proved in \cite[Theorem 4.9 and Example 4.10]{ara2019quillen}, the nerve of $C$ is a $K(\mathbb{Z},2)$ which has non-trivial homology groups in arbitrarily high even dimension.  

\section{Generalities on homotopy colimits}
\emph{The goal of this section is to provide a short summary on homotopy
  colimits. The reader familiar with the subject may skip it and refer to it if needed.}
\begin{paragr}
  A \emph{localizer} is a pair $(\C,\W)$ where $\C$ is a category and $\W$ is a class of arrows of $\C$, which we usually refer to as the weak equivalences.
We denote by $\Ho(\C)$, the localization of $\C$ with respect to $\W$ and by
  \[
  \gamma : \C \to \Ho(\C)
  \]
  the localization functor \cite[1.1]{gabriel1967calculus}. Recall the universal property of the localization: for every category $\D$, the functor induced by pre-composition
  \[
  \gamma^* : \underline{\Hom}(\Ho(\C),\D) \to \underline{\Hom}(\C,\D)
  \]
  is fully faithful and its essential image consists of the functors $F : \C
  \to \D$ that send the morphisms of $\W$ to isomorphisms of $\D$.
  
  We shall always consider that $\C$ and $\Ho(\C)$ have the same class of objects and implicitly use the equality
  \[
  \gamma(X)=X
  \]
  for every object $X$ of $\C$.
\end{paragr}
\begin{paragr}
Let $(\C,\W)$ and $(\C',\W')$ be two localizers and  $F : \C \to \C'$ be a functor. If $F$ \emph{preserves weak equivalences}, i.e.\ $F(\W) \subseteq \W'$, then the universal property of the localization implies that there is a canonical functor
  \[
  \overline{F} : \Ho(\C) \to \Ho(\C')
  \]
  such that the square
    \[
  \begin{tikzcd}
    \C \ar[r,"F"] \ar[d,"\gamma"] & \C' \ar[d,"\gamma'"]\\
    \Ho(\C) \ar[r,"\overline{F}"] & \Ho(\C').
  \end{tikzcd}
  \]
  is commutative.
\end{paragr}
\begin{remarksub}\label{remark:homotopicalfunctor}
  Since we always consider that localization functors are the identity on
  objects, we have the
  equality
  \[
   \overline{F}(X)=F(X)
 \]
 for every object $X$ of $\C$.
\end{remarksub}
\begin{paragr}\label{paragr:defleftderived}
  Let $(\C,\W)$ and $(\C',\W')$ be two localizers. A functor $F : \C \to \C'$ is \emph{totally left derivable} when there exists a functor
  \[
  \LL F : \Ho(\C) \to \Ho(\C')
  \]
  and a natural transformation 
  \[
  \alpha : \LL F \circ \gamma \Rightarrow \gamma'\circ F
  \]
  that makes $\LL F$ the \emph{right} Kan extension of $\gamma' \circ F$ along $\gamma$:
  \[
  \begin{tikzcd}
    \C \ar[r,"F"] \ar[d,"\gamma"] & \C' \ar[d,"\gamma'"]\\
    \Ho(\C) \ar[r,"\LL F"'] & \Ho(\C').
    \arrow[from=2-1, to=1-2,"\alpha",Rightarrow]
  \end{tikzcd}
  \]
 When this right Kan extension is \emph{absolute}, we say that $F$ is \emph{absolutely totally left derivable}.

  Note that when a functor $F$ is totally left derivable, the pair $(\LL F,\alpha)$ is unique up to a unique natural isomorphism and thus we shall refer to $\LL F$ as \emph{the} total left derived functor of $F$.

  The notion of (absolute) total right derivable functor is defined dually and
  the notation $\RR F$ is used.
\end{paragr}
\begin{remarksub}\label{rem:homotopicalisleftder}
    If $F : \C \to \C'$ preserves weak equivalences, then it follows from the
    universal property of the localization that $F$ is absolutely totally left
    and right derivable and $\LL F \simeq \RR F \simeq \overline{F}$.
  \end{remarksub}
\begin{paragr}\label{paragr:diagfun}
  Let $(\C,\W)$ be a localizer and $A$ be a small category. We denote by $\C^A$ the category of functors from $A$ to $\C$ and natural transformations between them. An arrow $\alpha : d \to d'$ of $\C^A$ is a \emph{pointwise weak equivalence} when $\alpha_a : d(a) \to d'(a)$ belongs to $\W$ for every $a \in A$. We denote by $\W_A$ the class of pointwise weak equivalences. This defines a localizer $(\C^A,\W_A)$.

  Let 
  \[
    k : \C \to \C^A
  \]
  be the diagonal functor, i.e.\ for an object $X$ of $\C$, $k(X) : A \to \C$ is the constant functor with value $X$.  This functor preserves weak equivalences, whence a functor
  \[
  \overline{k} : \Ho(\C) \to \Ho(\C^A).
  \]
\end{paragr}
\begin{definition}
  A localizer $(\C,\W)$ has \emph{homotopy colimits} when for every small category $A$, the functor
  \[
  \overline{k} : \Ho(\C) \to \Ho(\C^A)
  \]
  has a left adjoint.
\end{definition}
\begin{paragr}\label{paragr:hocolim}
  When a localizer $(\C,\W)$ has homotopy colimits, we denote by \[\hocolim_A : \Ho(\C) \to \Ho(\C^A)\] the left adjoint of $\overline{k} : \Ho(\C) \to \Ho(\C^A)$. For an object $d$ of $\C^A$, the object
  \[
  \hocolim_A (d)
  \]
  of $\Ho(\C)$ is the \emph{homotopy colimit} of $d$. For consistency, we also use the notation
  \[
  \hocolim_{a \in A}d(a).
  \]

  Note that when $\C$ has colimits and $(\C,\W)$ has homotopy colimits, it
  follows from Remark \ref{rem:homotopicalisleftder} and the dual of
  \cite[Theorem 3.4]{gonzalez2012derivability} that $\hocolim_A$ is the total left derived functor of $\colim_A$. In particular, for every functor $d : A \to \C$, there is a canonical arrow of $\Ho(\C)$
  \[
  \hocolim_A (d) \to \colim_A (d).
  \]
\end{paragr}

\begin{paragr}\label{paragr:canmaphocolim}
  Let $(\C,\W)$ and $(\C',\W')$ be two localizers and $F : \C \to \C'$ be a functor that preserves weak equivalences. For every small category $A$, the functor induced by post-composition, which we abusively denote by
  \[
  F: \C^A \to \C'^A,
  \]
  again preserves weak equivalences and we have a commutative square
  \[
  \begin{tikzcd}
    \Ho(\C^A) \ar[r,"\overline{F}"] & \Ho(\C'^A) \\
    \Ho(\C) \ar[u,"\overline{k}"] \ar[r,"\overline{F}"] & \Ho(\C') \ar[u,"\overline{k}"].
  \end{tikzcd}
  \]
  Suppose now that $(\C,\W)$ and $(\C',\W')$ have homotopy colimits. Using the unit and co-unit of the adjunctions $\hocolim_A \dashv \overline{k}$,
  \[
  \begin{tikzcd}[column sep=large]
    &\Ho(\C^A) \ar[r,"\overline{F}"] & \Ho(\C'^A)\ar[r,"\hocolim_A"]& \Ho(\C')\\
    \Ho(\C^A)\ar[r,"\hocolim_A"'] \ar[ru,"\mathrm{id}",""{name=U,below},bend left]&\Ho(\C) \ar[u,"\overline{k}"] \ar[r,"\overline{F}"] & \Ho(\C') \ar[u,"\overline{k}"]\ar[ru,"\mathrm{id}"',bend right,""{name=C,above}]&,
    \ar[Rightarrow,from=U,to=2-2,"\eta"]\ar[Rightarrow,from=1-3,to=C,"\epsilon"]
  \end{tikzcd}
  \]
  we obtain a natural transformation
  \[
      \hocolim_A \circ \, \overline{F} \Rightarrow \overline{F} \circ \hocolim_A.
  \]
  Hence, for every object $d : A \to \C$ of $\C^A$, a canonical map
  \[
  \hocolim_A(F(d)) \to F(\hocolim_A(d)).
  \]
\end{paragr}
\section{Homotopy colimits in combinatorial model categories}
\emph{In this section, we recall a few useful results on
  homotopy colimits in the context of model categories. As for the previous
  section, it might be skipped at first reading and referred to when needed. We suppose however
  that the reader is familiar with the basics of model category theory. Recall that a combinatorial model category is a cofibrantly generated model category such that the underlying category is locally presentable.}
\begin{paragr}\label{paragr:quilfun}
 Let $(\C,\W,\Cof,\Fib)$ and $(\C',\W',\Cof',\Fib')$ be two model categories and
 $F : \C \to \C'$ be a functor.  Recall that if $F$ is a left Quillen functor (i.e.\ the left adjoint in a Quillen adjunction), then $F$ is absolutely totally left derivable and for every cofibrant object $X$ of $\C$, the canonical arrow
  \[
  \LL F(X) \to F(X)
  \]
  is an isomorphism of $\Ho(\C')$.
  
\end{paragr}
\begin{remarksub} Note that the definition of totally left derivable functor
  only depends on the weak equivalences. In particular, if there are several model structures for which a functor $F$ is left Quillen, then for any cofibrant object $X$ of \emph{any} such model structure, the canonical map
  \[
  \LL F(X) \to F(X)
  \]
  is an isomorphism.
  \end{remarksub}

  \begin{proposition}\label{prop:modprs}
    Let $(\C,\W,\Cof,\Fib)$ be a combinatorial model category. For every small category $A$, there exists:
    \begin{enumerate}
    \item A model structure on $\C^A$, called the \emph{projective model
        structure}, with the pointwise weak equivalences and the pointwise fibrations as weak equivalences and fibrations respectively.
    \item A model structure on $\C^A$, called the \emph{injective model
        structure}, with the pointwise weak equivalences and the pointwise cofibrations as weak equivalences and cofibrations respectively.
      \end{enumerate}
  \end{proposition}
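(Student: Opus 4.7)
The plan is to verify the hypotheses of the usual recognition and transfer theorems for combinatorial model categories, rather than construct the factorizations from scratch.

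For (1), I would apply Kan's transfer theorem along the family of adjunctions
\[
a_! : \C \rightleftarrows \C^A : \mathrm{ev}_a, \qquad a \in \Ob(A),
\]
where $\mathrm{ev}_a$ is evaluation at $a$ and $a_!$ is its left adjoint, given explicitly by $a_!(X)(b) = \coprod_{A(a,b)} X$. Declaring $p : d \to d'$ to be a fibration or a weak equivalence precisely when each component $p_a$ is, the candidate generating (trivial) cofibrations are $\{a_!(i) : a \in \Ob(A),\ i \in I\}$ and $\{a_!(j) : a \in \Ob(A),\ j \in J\}$, where $I$, $J$ are the generating (trivial) cofibrations of $\C$. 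Local presentability of $\C^A$ --- which follows from that of $\C$ since $A$ is small --- licenses the small object argument, and the acyclicity condition reduces pointwise to the corresponding statement in $\C$, using the explicit formula for $a_!$ as a coproduct and stability of trivial cofibrations under coproducts, pushouts, and transfinite composition.

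For (2), direct transfer is not available, so I would invoke Jeff Smith's recognition theorem for combinatorial model structures. Take $\W_A$ to be the pointwise weak equivalences and the cofibrations to be the pointwise cofibrations. The verifications are then: two-out-of-three and retract closure for $\W_A$ (immediate from the analogous properties in $\C$); accessibility of $\W_A$ as a full subcategory of the arrow category (inherited from accessibility of $\W$ in the combinatorial category $\C$); closure of pointwise trivial cofibrations under pushouts and transfinite composition; the existence of a \emph{set} of generators for pointwise cofibrations; and the acyclicity condition that cofibrations with the right lifting property against all fibrations are weak equivalences.

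The main technical obstacle is producing a set of generators for pointwise cofibrations, since a priori they form a proper class. The standard way around this is a cardinality argument: in a locally presentable category, every cofibration is a transfinite composition of pushouts of cofibrations between $\kappa$\nbd{}presentable objects for some regular $\kappa$, and up to isomorphism there is only a set of such maps. Once this generating set is produced, Smith's theorem assembles the remaining data into the injective model structure. In practice, I would expect the author simply to invoke the packaged versions of these results --- for instance the combinatorial projective and injective existence theorems found in Lurie's \emph{Higher Topos Theory}, Appendix A.2.8, or in Barwick's work on combinatorial model categories.
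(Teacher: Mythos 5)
Your proposal is correct and matches the paper's approach exactly: the paper simply cites Lurie's \emph{Higher Topos Theory}, Proposition A.2.8.2, which is precisely the packaged existence theorem you anticipated invoking at the end. The details you sketch (transfer along the evaluation adjunctions for the projective structure, Smith's recognition theorem plus a cardinality argument for the injective structure) are the standard content behind that citation.
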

  \begin{proof}
    See \cite[Proposition A.2.8.2]{lurie2009higher}
  \end{proof}
  \begin{proposition}
    Let $(\C,\W,\Cof,\Fib)$ be a combinatorial model category and $A$ a small category. The adjunction
    \[
    \begin{tikzcd}
      \colim_A : \C^A \ar[r,shift left]& \ar[l,shift left] \C : k
      \end{tikzcd}
    \]
    is a Quillen adjunction with respect to the projective model structure on $\C^A$.
    \end{proposition}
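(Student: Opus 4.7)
The plan is a direct verification that the right adjoint preserves fibrations and trivial fibrations, after which Quillen's standard criterion gives the result.

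First, I would observe that the adjunction $\colim_A \dashv k$ exists as an ordinary adjunction of categories: since any model category is, by convention, cocomplete, $\C$ admits all $A$-indexed colimits, and the familiar adjunction between the colimit functor and the constant diagram functor applies.

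Next, to upgrade this to a Quillen adjunction with respect to the projective model structure on $\C^A$, I would invoke the standard equivalent characterization: it suffices to show that the right adjoint $k$ sends fibrations to fibrations and trivial fibrations to trivial fibrations. By Proposition \ref{prop:modprs}, fibrations in $\C^A$ are exactly the pointwise fibrations and weak equivalences are exactly the pointwise weak equivalences; consequently, trivial fibrations in $\C^A$ are pointwise trivial fibrations. Given any morphism $f : X \to Y$ in $\C$, the natural transformation $k(f) : k(X) \to k(Y)$ has component $k(f)_a = f$ at every $a \in A$. Hence if $f$ is a fibration (resp.\ trivial fibration) in $\C$, then every component of $k(f)$ is a fibration (resp.\ trivial fibration), so $k(f)$ is a fibration (resp.\ trivial fibration) in the projective model structure.

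There is no real obstacle in the argument; the whole content lies in the fact that the projective model structure is designed precisely so that fibrations and weak equivalences are tested pointwise, which makes the preservation property for the constant functor tautological. The role of the combinatoriality hypothesis is solely to guarantee, via Proposition \ref{prop:modprs}, the existence of the projective model structure in the first place.
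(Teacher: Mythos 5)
Your argument is correct and is essentially the paper's own proof, which simply notes that by definition of the projective model structure the right adjoint $k$ preserves (pointwise) fibrations and weak equivalences, hence trivial fibrations. You have merely spelled out the tautology in slightly more detail.
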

  \begin{proof}
By definition of the projective model structure, $k$ preserve weak equivalences and fibrations.
  \end{proof}
  \begin{paragr}\label{paragr:hocolimms}
    We deduce from the previous proposition that if a localizer $(\C,\W)$ can be extended to a combinatorial model category $(\C,\W,\Cof,\Fib)$, then it has homotopy colimits and
    \[
    \hocolim_A \simeq \LL \colim_A.
    \]
    Since $\colim_A$ is left Quillen with respect to the projective model structure, it is particularly interesting to detect the cofibrant objects of this model structure in order to compute homotopy colimits. This is the goal of the paragraph below and the lemma that follows.
    \end{paragr}
    \begin{paragr}\label{paragr:cofprojms}
    Let $\C$ be a category with coproducts and $A$ a small category. For every
    object $X$ of $\C$ and every object $a$ of $A$, we define $X\otimes a$ as the functor
  \[
  \begin{aligned}
    X \otimes a : A &\to \C \\
    b &\mapsto \coprod_{\Hom_A(a,b)}X.
    \end{aligned}
  \]
  For every object $a$ of $A$, this gives rise to a functor
   \[
  \begin{aligned}
  \shortminus \otimes a : \C &\to \C^A\\
  X &\mapsto X \otimes a.
  \end{aligned}
  \]
  \end{paragr}
  \begin{lemma}\label{lemma:cofprojms}
    Let $\C=(\C,\W,\Cof,\Fib)$ be a combinatorial model category and $A$ a small category. For every object $a$ of $A$ and every cofibration $f : X \to Y$ of $\C$, the arrow
    \[
    f \otimes a : X \otimes a \to Y \otimes a
    \]
    is a cofibration of the projective model structure on $\C^A$.
  \end{lemma}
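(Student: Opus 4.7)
The plan is to exhibit $\shortminus \otimes a : \C \to \C^A$ as a left Quillen functor for the projective model structure; the conclusion is then immediate since left Quillen functors preserve cofibrations.

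The first step, and essentially the only content of the proof, is to identify a right adjoint to $\shortminus \otimes a$. I would show that this right adjoint is the evaluation functor $\mathrm{ev}_a : \C^A \to \C$, $d \mapsto d(a)$, by constructing a natural bijection
\[
\Hom_{\C^A}(X \otimes a, d) \cong \Hom_{\C}(X, d(a)).
\]
In one direction, a natural transformation $\alpha : X \otimes a \Rightarrow d$ has a component $\alpha_a$ whose restriction to the summand indexed by $\id_a \in \Hom_A(a,a)$ yields a map $X \to d(a)$. In the other direction, a map $\varphi : X \to d(a)$ determines a natural transformation whose component at $b$ is obtained, summand by summand, by setting the $f$\nbd{}th component (for $f \in \Hom_A(a,b)$) to the composite $d(f) \circ \varphi$. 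Naturality in $b$ follows at once from functoriality of $d$, and the two constructions are mutually inverse.

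Once the adjunction is in place, by Proposition \ref{prop:modprs} the fibrations and trivial fibrations of the projective model structure on $\C^A$ are defined pointwise, so $\mathrm{ev}_a$ tautologically preserves them. Hence $\shortminus \otimes a \dashv \mathrm{ev}_a$ is a Quillen adjunction, and in particular $f \otimes a$ is a cofibration of the projective model structure whenever $f$ is a cofibration of $\C$. I do not expect any real obstacle: the whole argument reduces to the adjunction computation, which is purely formal and uses nothing about the model structure beyond the pointwise definition of the (trivial) fibrations.
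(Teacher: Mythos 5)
Your proposal is correct and follows essentially the same route as the paper: both proofs rest on identifying the adjunction $\shortminus \otimes a \dashv \mathrm{ev}_a$ and then transferring the lifting property across it, using that the (trivial) fibrations of the projective model structure are detected by $\mathrm{ev}_a$. Packaging this as ``$\mathrm{ev}_a$ is right Quillen, hence $\shortminus \otimes a$ preserves cofibrations'' is just a reformulation of the paper's direct adjunction-of-lifting-properties argument.
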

  \begin{proof}
  We leave it to the reader to check that the functor $\shortminus \otimes a$ is left adjoint to
  \[
  \begin{aligned}
    \mathrm{ev}_a : \C^A &\to \C\\
    F &\mapsto F(a).
    \end{aligned}
  \]
 Let $\alpha$ be a fibration of the projective model structure on $\C^A$. By definition, $f$ has the left lifting property with respect to $\mathrm{ev}_a(\alpha)$. Hence, by adjunction, $f\otimes a$ has the left lifting property with respect to $\alpha$, which is what we needed to prove.
  \end{proof}
  \begin{paragr}
    Let $(\C,\W,\Cof,\Fib)$ and $(\C',\W',\Cof',\Fib')$ be two combinatorial
    model categories and let $F : \C \to \C'$ be a left Quillen functor. For every small category $A$, the functor induced by post-composition
    \[
    F : \C^A \to \C'^A
    \]
    is left Quillen both with respect to the projective model structure \emph{and} the injective model structure. In particular, all arrows of the commutative square
    \[
    \begin{tikzcd}
          \C^A \ar[r,"F"] & \C'^A \\
          \C \ar[u,"k"] \ar[r,"F"] & \C' \ar[u,"k"]
      \end{tikzcd}
    \]
    are left Quillen functors when $\C^A$ and $\C'^A$ are equipped with the injective model structure. Hence, by composition of left Quillen functors, we obtain a commutative square up to a canonical isomorphism 
    \[
        \begin{tikzcd}
          \Ho(\C^A) \ar[r,"\LL F"] & \Ho(\C'^A) \\
          \Ho(\C) \ar[u,"\overline{k}"] \ar[r,"\LL F"] & \Ho(\C') \ar[u,"\overline{k}"].
            \ar[phantom,from=1-1, to=2-2,"\simeq",description]
      \end{tikzcd}
        \]
        In a similar fashion as in Paragraph \ref{paragr:canmaphocolim}, we obtain a natural transformation
        \[
        \hocolim_A \circ \,\LL F \Rightarrow \LL F \circ \hocolim_A.
        \]
        The next proposition tells us that left Quillen functors are ``homotopy cocontinuous''. 
  \end{paragr}
  
  \begin{proposition}\label{prop:leftquilcocontinuous}
    Let $(\C,\W,\Cof,\Fib)$ and $(\C',\W',\Cof',\Fib')$ be two combinatorial
    model categories, $F : \C \to \C'$ be a left Quillen functor and $A$ be a small category. For every object $d : A \to \C$ of $\C^A$, the canonical map
    \[
    \hocolim_A (\LL F(d)) \to \LL F (\hocolim_A(d))
    \]
    is an isomorphism of $\Ho(\C')$.
  \end{proposition}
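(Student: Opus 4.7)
The plan is to evaluate the canonical map on a projectively cofibrant replacement of $d$ and observe that both sides reduce to the same object. I equip $\C^A$ and $\C'^A$ with the projective model structure, for which $\hocolim_A \simeq \LL \colim_A$ by Paragraph~\ref{paragr:hocolimms}, and I choose a projectively cofibrant replacement $q : d' \to d$ in $\C^A$.

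By the paragraph preceding this proposition, the post-composition functor $F : \C^A \to \C'^A$ is left Quillen for the projective structure, so it preserves projectively cofibrant objects; in particular $F(d')$ is projectively cofibrant and $\LL F(d) \simeq F(d')$ by Paragraph~\ref{paragr:quilfun}. Hence
\[
\hocolim_A(\LL F(d)) \simeq \hocolim_A(F(d')) \simeq \colim_A(F(d')).
\]
On the other side, $\hocolim_A(d) \simeq \colim_A(d')$, and the object $\colim_A(d')$ is cofibrant in $\C$ since $\colim_A$ is left Quillen for the projective structure, so $\LL F(\colim_A(d')) \simeq F(\colim_A(d'))$. Because $F$ is a left adjoint it commutes with colimits, whence
\[
\LL F(\hocolim_A(d)) \simeq F(\colim_A(d')) = \colim_A(F(d')),
\]
identifying both sides with $\colim_A(F(d'))$.

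The main obstacle is checking that this composite isomorphism coincides with the canonical map built via the mate construction of the preceding paragraph. The cleanest route is to observe that $\colim_A \circ F = F \circ \colim_A$ on the nose as functors $\C^A \to \C'$, expressed in two ways as compositions of left Quillen functors for the projective structure; the standard fact that the left derived functor of a composition of left Quillen functors is canonically the composition of their derived functors then yields a natural isomorphism $\hocolim_A \circ \LL F \simeq \LL F \circ \hocolim_A$, and a direct unwinding of the mate construction shows that this is precisely the canonical map in the statement.
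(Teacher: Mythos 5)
Your proof is correct and follows essentially the same route as the paper's: pass to the projective model structures, reduce to a projectively cofibrant diagram, and identify the canonical map with $\colim_A(F(d')) \to F(\colim_A(d'))$, which is an isomorphism because $F$ preserves colimits. Your final paragraph just makes explicit the compatibility check that the paper treats as an identification.
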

  \begin{proof}
    We use here the projective model structures on $\C^A$ and $\C'^A$. Since every object $d$ of $\C^A$ is isomorphic in $\Ho(\C^A)$ to a cofibrant one, it suffices to show that the map
    \[
    \hocolim_A(\LL F(d)) \to \LL F(\hocolim_A(d))
    \]
    is an isomorphism when $d$ is cofibrant. But in this case, the previous map can be identified with the image of the map
    \[
    \colim_A(F(d))\to F(\colim_A(d))
    \]
    in the localization of $\C'^A$. Since $F$ is a left adjoint, this map is indeed an isomorphism.
  \end{proof}
  \section{\texorpdfstring{$\omega$}{\textomega}-categories}
  \begin{paragr}
    We denote by $\omega \Cat$ the category of (small) \emph{strict} $\omega$\nbd{}categories and \emph{strict} $\omega$\nbd{}functors. Since we shall never deal with non-strict $\omega$\nbd{}categories, we omit the word ``strict'' and simply say $\omega$\nbd{}categories and $\omega$\nbd{}functors. For an $\omega$\nbd{}category $C$, we denote by $C_n$ the set of $n$\nbd{}cells of $C$. For $x \in C_n$, we refer to the integer $n$ as the \emph{dimension of $x$}.

    For an $n$\nbd{}cell $x$ with $n>0$, $s(x)$ and $t(x)$ are respectively the source and target of $x$ (which are $(n \shortminus 1)$-cells). More generally, for every $k<n$, $s_k(x)$ and $t_k(x)$ are respectively the $k$\nbd{}dimensional source of $x$ and the $k$\nbd{}dimensional target of $x$, which are obtained by iteration. We say that two $n$\nbd{}cells $x$ and $y$ are \emph{parallel} if either $n=0$ or $n>0$ and
    \[
    s(x)=s(y) \text{ and }t(x)=t(y).
    \]
    Let $k<n \in \mathbb{N}$. Two $n$\nbd{}cells $x$ and $y$ are \emph{$k$\nbd{}composable} if $s_k(x)=t_k(y)$, in which case we denote their $k$\nbd{}composition by $x\ast_k y$.
    
    For an $n$\nbd{}cell $x$, $1_x$ is the unit on $x$, which is an $(n+1)$\nbd{}cell. More generally, for every $k>n$ we denote by $1^{(k)}_x$ the $k$\nbd{}dimensional unit on $n$ obtained by iteration. (Note that the superscript denotes the dimension and not the number of iterations.) For consistency, we also define $1^{(n)}_x:=x$ for every $n$\nbd{}cell $x$. An $n$\nbd{}cell $x$ is \emph{degenerate} if there exists a cell $y$ of dimension strictly lower than $n$ and such that $x=1^{(n)}_y$.

    We extend the notion of $k$\nbd{}composition for cells of different dimension in the following way. Let $x$ be an $n$\nbd{}cell, $y$ be an $m$\nbd{}cell and $k < \min\{m,n\}$. The cells $x$ and $y$ are \emph{$k$\nbd{}composable} if $s_k(x)=t_k(y)$, in which case we define $x\ast_k y$ as the cell of dimension $l := \max\{m,n\}$
    \[
    x\ast_ky:= 1^{(l)}_x\ast_k 1^{(l)}_y.
    \]

    Finally, we use the convention that for $n<m$ the operation $\ast_n$ has priority over $\ast_m$, which means that
    \[
    x\ast_n y \ast_m z = (x \ast_n y) \ast_m z \text{ and } x \ast_m y \ast_n z = x \ast_m (y \ast_n z)
    \]
    whenever these equations make sense.
  \end{paragr}
  \begin{paragr}
    Let $n \in \mathbb{N}$. An \emph{$n$\nbd{}category} is an $\omega$\nbd{}category such that all cells of dimension strictly greater than $n$ are degenerate. An \emph{$n$\nbd{}functor} is an $\omega$\nbd{}functor between $n$\nbd{}categories. We denote by $n\Cat$ the category of $n$\nbd{}categories and $n$\nbd{}functors.
    For an $\omega$\nbd{}category $C$, we denote by $C_{\leq n}$ the $n$\nbd{}category obtained from $C$ by removing all non-degenerate cells of dimension strictly greater than $n$. The functor
    \[
    \begin{aligned}
      \omega\Cat &\to n\Cat\\
      C &\mapsto C_{\leq n}
      \end{aligned}
    \]
    is right adjoint to the canonical inclusion functor
    \[
    n\Cat \hookrightarrow \omega\Cat.
    \]
    For an $\omega$\nbd{}category $C$, we define $C_{\leq -1}$ to be the empty $\omega$\nbd{}category (which is a $(\shortminus 1)$-category).
  \end{paragr}
  \begin{remark}\label{rem:1cat}
    When $n=1$, we have a canonical functor
    \[
    1\Cat \to \Cat
    \]
    from the category of $1$-categories to the category of small categories, which simply forgets the $k$\nbd{}cells for $k>1$. Since this functor is an isomorphism of categories, we usually identify the categories $\Cat$ and $1\Cat$ and consider that the terms ``$1$-category'' and ``(small) category'' are synonyms. 
  \end{remark}
  \begin{paragr}\label{paragr:defglobe}
    Let $n \in \mathbb{N}$. The functor
    \[
    \begin{aligned}
      \omega\Cat &\to \Set\\
      C &\mapsto C_n
      \end{aligned}
    \]
    is representable and we define the $n$\nbd{}globe $\sD_n$ as the $\omega$\nbd{}category representing this functor. ($\sD_n$ is in fact an $n$\nbd{}category.) 
    For an $n$\nbd{}cell $x$ of an $\omega$\nbd{}category $C$, we denote by
    \[
    \langle x \rangle : \sD_n \to C
    \]
    the associated morphism of $\omega\Cat$. Here are some pictures of $\sD_n$ in low dimension:
     \[
  \sD_0= \begin{tikzcd}\bullet\end{tikzcd},
  \]
  \[
  \sD_1 = \begin{tikzcd} \bullet \ar[r] &\bullet \end{tikzcd},
  \]
  \[
  \sD_2 = \begin{tikzcd}
    \bullet \ar[r,bend left=50,""{name = U,below}] \ar[r,bend right=50,""{name=D}]&\bullet \ar[Rightarrow, from=U,to=D]
  \end{tikzcd},
  \]
    \[
    \sD_3 = \begin{tikzcd}
         \bullet \ar[r,bend left=50,""{name = U,below,near start},""{name = V,below,near end}] \ar[r,bend right=50,""{name=D,near start},""{name = E,near end}]&\bullet \ar[Rightarrow, from=U,to=D, bend right,""{name= L,above}]\ar[Rightarrow, from=V,to=E, bend left,""{name= R,above}]
    \arrow[phantom,"\Rrightarrow",from=L,to=R]
  \end{tikzcd}.
    \]
    For an $\omega$\nbd{}category $C$, let us denote by $\mathrm{Par}_n(C)$ the set of pair of parallel $n$\nbd{}cells of $C$. The functor
    \[
    \begin{aligned}
      \omega\Cat &\to \Set\\
      C &\mapsto \mathrm{Par}_n(C)
      \end{aligned}
    \]
    is representable and we define the $n$\nbd{}sphere $\sS_n$ as the $\omega$\nbd{}category representing this functor. ($\sS_n$ is in fact an $n$\nbd{}category.) If $x$ and $y$ are parallel $n$\nbd{}cells of $C$, we denote by
    \[
    \langle x,y\rangle : \sS_n \to C
    \]
    the associated morphism of $\omega\Cat$. Here are some pictures of $\sS_n$ in low dimension:
       \[
  \sS_0= \begin{tikzcd}\bullet & \bullet \end{tikzcd},
  \]
  \[
  \sS_1 = \begin{tikzcd}     \bullet \ar[r,bend left=50,""{name = U,below}] \ar[r,bend right=50,""{name=D}]&\bullet  \end{tikzcd},
  \]
  \[
  \sS_2 = \begin{tikzcd}
         \bullet \ar[r,bend left=50,""{name = U,below,near start},""{name = V,below,near end}] \ar[r,bend right=50,""{name=D,near start},""{name = E,near end}]&\bullet \ar[Rightarrow, from=U,to=D, bend right,""{name= L,above}]\ar[Rightarrow, from=V,to=E, bend left,""{name= R,above}]
    \end{tikzcd}.
  \]
  Suppose now that $n>0$, and let $C$ be an $\omega$\nbd{}category. The canonical morphism
  \[
  \begin{aligned}
    C_n &\to \mathrm{Par}_{n-1}(C)\\
    x &\mapsto (s(x),t(x)),
    \end{aligned}
  \]
 is natural in $C$. Hence, a canonical morphism
  \[
  i_n : \sS_{n-1}\to \sD_n.
  \]
  We also define $\sS_{- 1}$ to be the empty $\omega$\nbd{}category (which is the initial object of $\omega\Cat$), and $i_0$ to be the unique morphism
  \[
  i_0 : \sS_{- 1} \to \sD_0.
  \]
  \end{paragr}
  \begin{paragr}
    A \emph{basis} of an $\omega$\nbd{}category $C$ is a graded set of cells of $C$
    \[
    \Sigma = (\Sigma_n \subseteq C_n)_{n\in \mathbb{N}}
    \]
   such that for every $n\in\mathbb{N}$, the commutative square
        \[
    \begin{tikzcd}[column sep=huge,row sep=huge]
      \displaystyle\coprod_{x \in \Sigma_n}\sS_{n-1} \ar[d,"\displaystyle\coprod_{x \in \Sigma_n} i_n"'] \ar[r,"{\langle s(x) , t(x) \rangle}"] & C_{\leq n-1}\ar[d]\\
      \displaystyle\coprod_{x \in \Sigma_n}\sD_n \ar[r,"\langle x \rangle"] & C_{\leq n},
      \end{tikzcd}
    \]
    where the anonymous arrow is the canonical inclusion, is a pushout square. That is, $C_{\leq n}$ is obtained from $C_{\leq n-1}$ by freely adjoining the $n$\nbd{}cells that belongs to $\Sigma_n$. An $\omega$\nbd{}category is \emph{free} when it has a basis.
    Note that a free $\omega$\nbd{}category has a \emph{unique} basis (see \cite[Section 4, Proposition 8.3]{makkai2005word}). This allows us to speak of \emph{the} basis of a free $\omega$\nbd{}category. Cells that belong to $\Sigma_n$ are referred to as \emph{generating $n$\nbd{}cells} of $C$.
  \end{paragr}
   Recall from \cite{guetta2020polygraphs} the following definition:
    \begin{definition}\label{def:condfun}
      An $\omega$\nbd{}functor $f : C \to D$ is a \emph{discrete Conduché
        $\omega$\nbd{}functor} if for every  $n\geq 0$, for every
      $n$\nbd{}cell $x$ of $C$ and for every pair $(y_1,y_2)$ of
      $k$\nbd{}composable $n$\nbd{}cells of $D$ with $k<n$ such that
      \[
      f(x)=y_1\ast_ky_2,
      \]
      there exists a \emph{unique} pair $(x_1,x_2)$ of $k$\nbd{}composable $n$\nbd{}cells of $C$ such that
      \begin{enumerate}
      \item $x=x_1\ast_kx_2$
        \item $f(x_1)=y_1$ and $f(x_2)=y_2$.
      \end{enumerate}
      \end{definition}
      \begin{lemma}\label{lemma:condfunpullback}
        Let
        \[
        \begin{tikzcd}
          C' \ar[r,"u"] \ar[d,"f'"] & C \ar[d,"f"] \\
          D' \ar[r,"v"] & D
          \end{tikzcd}
        \]
        be a pullback square in $\omega\Cat$. If $f$ is a discrete Conduché $\omega$\nbd{}functor, then so is $f'$.
      \end{lemma}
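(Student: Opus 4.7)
The plan is to exploit the concrete description of the pullback in $\omega\Cat$: since limits in $\omega\Cat$ are computed dimensionwise on the underlying globular sets (with composition defined componentwise), an $n$-cell of $C'$ is exactly a pair $(c,d')$ consisting of an $n$-cell $c$ of $C$ and an $n$-cell $d'$ of $D'$ satisfying $f(c) = v(d')$, with $u$ and $f'$ being the two projections. Two cells of $C'$ are $k$-composable iff both projections are $k$-composable, and composition is performed componentwise.

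Given this, I would start the verification of Definition \ref{def:condfun} for $f'$ as follows. Fix an $n$-cell $x$ of $C'$, corresponding to a pair $(c,d')$, and a decomposition $f'(x) = y_1 \ast_k y_2$ in $D'$ with $k < n$. Applying $v$ gives $f(c) = v(d') = v(y_1) \ast_k v(y_2)$ in $D$, so the discrete Conduché property of $f$ provides a unique pair $(c_1,c_2)$ of $k$-composable $n$-cells of $C$ with $c = c_1 \ast_k c_2$ and $f(c_i) = v(y_i)$ for $i=1,2$. The latter equality makes each pair $(c_i, y_i)$ a legitimate $n$-cell of the pullback $C'$; call it $x_i$. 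By construction $x_1$ and $x_2$ are $k$-composable (each projection is), $f'(x_i) = y_i$, and componentwise composition yields $x_1 \ast_k x_2 = (c_1 \ast_k c_2, y_1 \ast_k y_2) = (c,d') = x$, as required.

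For uniqueness, suppose $(x_1', x_2')$ is another such pair in $C'$. Then $(u(x_1'), u(x_2'))$ is a $k$\nbd{}composable pair in $C$ satisfying $u(x_1') \ast_k u(x_2') = u(x) = c$ and $f(u(x_i')) = v(f'(x_i')) = v(y_i)$, so by uniqueness for $f$ we get $u(x_i') = c_i$; combined with $f'(x_i') = y_i = f'(x_i)$, the universal property of the pullback forces $x_i' = x_i$. I don't anticipate any real obstacle here since everything reduces to the componentwise description of pullbacks and of composition in $\omega\Cat$; the only point requiring a touch of care is checking that composition of the constructed pairs $(c_i,y_i)$ is well-defined as a composition in $C'$, which is precisely the content of the componentwise composition rule.
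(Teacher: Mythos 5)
Your proof is correct and is exactly the direct verification the paper has in mind: the paper itself leaves this lemma to the reader (citing a remark in \cite{guetta2020polygraphs}), and your argument via the componentwise description of pullbacks in $\omega\Cat$ — lifting the decomposition along $f$ after applying $v$, pairing the resulting cells with the $y_i$, and using that a cell of the pullback is determined by its two projections for uniqueness — is the intended one. No gaps.
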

      \begin{proof}
        Left to the reader. See \cite[remark 4.5]{guetta2020polygraphs}.
      \end{proof}
      \begin{proposition}\label{prop:condfunfree}
        Let $f : C \to D$ be a discrete Conduché $\omega$\nbd{}functor. If $D$ is a free $\omega$\nbd{}category, then so is $C$.
        
        More precisely, if we denote by $\Sigma^D_n$ the set of generating $n$\nbd{}cells of $D$, then the set of generating $n$\nbd{}cells of $C$ is
        \[
        \Sigma^C_n=\{x \in C_n | f(x) \in \Sigma^D_n\}.
        \]
      \end{proposition}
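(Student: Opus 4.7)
The plan is to argue by induction on $n$ that, for each $n \geq 0$, the $n$\nbd{}truncation $C_{\leq n}$ fits into the pushout square characterising a basis, with the set $\Sigma^C_n := \{x \in C_n \mid f(x) \in \Sigma^D_n\}$ indexing the generating cells. Uniqueness of the basis of a free $\omega$\nbd{}category will then yield the ``more precisely'' clause automatically. The base case $n=0$ is immediate, since every $0$\nbd{}cell of a free $\omega$\nbd{}category is a generator, hence $\Sigma^C_0 = C_0$ and $C_{\leq 0}$ is tautologically the coproduct of copies of $\sD_0$ indexed by $C_0$.

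For the inductive step, fix $n \geq 1$ and assume $C_{\leq n-1}$ is free with basis $\Sigma^C_0, \dots, \Sigma^C_{n-1}$. To check that the candidate square
\[
\begin{tikzcd}[column sep=large]
\coprod_{x \in \Sigma^C_n}\sS_{n-1} \ar[r,"{\langle s(x),t(x)\rangle}"] \ar[d] & C_{\leq n-1} \ar[d] \\
\coprod_{x \in \Sigma^C_n}\sD_n \ar[r,"\langle x\rangle"] & C_{\leq n}
\end{tikzcd}
\]
is a pushout in $\omega\Cat$, I would verify its universal property directly. Given an $\omega$\nbd{}functor $g : C_{\leq n-1} \to E$ together with a family $\{e_x \in E_n\}_{x \in \Sigma^C_n}$ matching source and target with $g$, one must construct a unique extension $\tilde g : C_{\leq n} \to E$ with $\tilde g(x) = e_x$ for all $x \in \Sigma^C_n$.

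The value of $\tilde g$ on an arbitrary $n$\nbd{}cell $c$ of $C$ is defined as follows: pick a decomposition of $f(c) \in D_n$ as an iterated $\ast_k$\nbd{}composite of units on cells of $\Sigma^D_{\leq n}$, which exists because $D$ is free; then apply Definition \ref{def:condfun} repeatedly to lift each factor of this decomposition uniquely to $C$, obtaining a decomposition of $c$ whose constituents all lie in $\Sigma^C_{\leq n}$; finally, substitute $e_x$ for each lifted top-dimensional generator $x \in \Sigma^C_n$ and $g(\cdot)$ for each lift of a lower-dimensional generator (the latter being forced by the inductive hypothesis).

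The main obstacle is to verify that $\tilde g$ is well-defined and is genuinely an $\omega$\nbd{}functor. Two decompositions of the same $n$\nbd{}cell of $D$ are related by the $\omega$\nbd{}category axioms, so they give the same element of $E_n$ once one replaces the generators of $\Sigma^D_{\leq n}$ by their chosen images; the uniqueness clause in the discrete Conduché condition transports every such relation to the corresponding lifted decomposition in $C$, forcing equality there too. Compatibility with sources, targets, units and compositions is then immediate from the construction, and uniqueness of $\tilde g$ holds because any extension is forced on generators and the previous step shows every cell of $C_{\leq n}$ is a (unique up to the axioms) composite of such lifted generators.
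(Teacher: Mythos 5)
The paper does not actually prove this proposition: it cites Theorem 5.12(1) of \cite{guetta2020polygraphs}, which is the main result of a separate, dedicated article. Your outline is the natural first attempt and correctly locates the difficulty, but the step you label ``the main obstacle'' --- well-definedness of $\tilde g$ --- is precisely the content of that theorem, and your two-sentence treatment of it does not close the gap. Two distinct things are being asserted there without proof. First, that any two decompositions of a cell of the free $\omega$\nbd{}category $D$ into generators ``are related by the $\omega$\nbd{}category axioms'': making this precise requires a syntactic calculus of well-formed expressions and the identification of equality in a free $\omega$\nbd{}category with the congruence generated by contextual instances of the axioms (the word problem for free $\omega$\nbd{}categories, cf.\ the Makkai reference in the text); this is a theorem about polygraphs, not a tautology. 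Second, that ``the uniqueness clause \dots transports every such relation'': the uniqueness clause of Definition \ref{def:condfun} only concerns a single binary decomposition $f(x)=y_1\ast_k y_2$, whereas an elementary rewrite (associativity, unit laws, interchange, functoriality of units) is applied deep inside an expression. Transporting it requires lifting the ambient context node by node, lifting the redex, and checking that the two resulting parse-tree lifts of the whole expression coincide --- an induction on the structure of the expression that has to be carried out, with the interchange law as the genuinely delicate case. Until this is done, $\tilde g$ is only defined on expressions, not on cells of $C$.

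Two smaller points. Your construction lifts each leaf of a decomposition of $f(c)$, i.e.\ a unit $1^{(n)}_{d}$ on a lower-dimensional generator $d$ of $D$, to a unit on a cell of $C$ over $d$; this ``unit divisibility'' is not literally part of Definition \ref{def:condfun} as stated and is used without comment (it does follow from the uniqueness clause, by comparing the two lifts $(x,1_{s(x)})$ and $(1_{t(x)},x)$ of $f(x)=1_y\ast_{n-1}1_y$, but that needs to be said). Also, note that the existence part of your argument --- every $n$\nbd{}cell of $C$ admits \emph{some} decomposition into cells of $\Sigma^C_{\leq n}$, obtained by lifting a decomposition of its image --- is fine, and uniqueness of the basis (\cite{makkai2005word}) does give the ``more precisely'' clause once freeness is known. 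The proposal is therefore a reasonable plan of attack, but as written it assumes the hard part rather than proving it; the honest options are to carry out the syntactic induction sketched above or to cite \cite{guetta2020polygraphs} as the paper does.
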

      \begin{proof}
      This is Theorem 5.12(1) from \cite{guetta2020polygraphs}.
      \end{proof}
   
  \begin{paragr}
    Let $C$ be an $\omega$\nbd{}category. We define the equivalence relation $\sim_{\omega}$ on the set $C_n$ by co-induction on $n \in \mathbb{N}$. Let $x, y \in C_n$, then $x \sim_{\omega} y $ when:
    \begin{itemize}
    \item[-] $x$ and $y$ are parallel,
    \item[-] there exist $r, s \in C_{n+1}$ such that $r : x \to y$, $s : y \to x$,
      \[
      r\ast_{n}s \sim_{\omega} 1_y
      \]
      and
      \[
      s\ast_nr \sim_{\omega} 1_x.
      \]

    \end{itemize}
    For details on this definition and the proof that it is an equivalence relation, see \cite[section 4.2]{lafont2010folk}.
    \end{paragr}
  \begin{paragr}\label{paragr:defeqomegacat}
    An $\omega$\nbd{}functor $f : C \to D$ is an \emph{equivalence of $\omega$\nbd{}categories} when:
    \begin{itemize}
      \item[-] for every $y \in D_0$, there exists a $x \in C_0$ such that
      \[f(x)\sim_{\omega}y,\]
    \item[-] for every $x,y \in C_n$ that are \emph{parallel} and every $\beta \in D_{n+1}$ such that \[\beta : f(x) \to f(y),\] there exists $\alpha \in C_{n+1}$ such that
      \[\alpha : x \to y
      \]
      and
      \[f(\alpha)\sim_{\omega}\beta.\]
      \end{itemize}
  \end{paragr}
  \begin{theorem}\label{thm:folkms}
    There exists a combinatorial model structure on $\omega\Cat$ whose weak
    equivalences are the equivalences of $\omega$\nbd{}categories, and such that
    the set $\{i_n : \sS_{n-1} \to \sD_n \vert n \in \mathbb{N}\}$ is a set of generating cofibrations.
  \end{theorem}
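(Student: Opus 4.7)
The plan is to apply Jeff Smith's recognition theorem for combinatorial model categories, which produces such a structure from a set $I$ of generating cofibrations and a class $\W$ of weak equivalences on a locally presentable category, provided: (a) $\W$ has the two-out-of-three property, is closed under retracts, and is accessible and accessibly embedded in the arrow category; (b) every morphism with the right lifting property against $I$ lies in $\W$; and (c) the class $\W \cap \mathrm{cof}(I)$ is stable under pushout and transfinite composition.

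First, $\omega\Cat$ is locally presentable since strict $\omega$\nbd{}categories are models of an essentially algebraic theory. Next, by transposition along the representability of $\sD_n$ and $\sS_{n-1}$, a morphism $f : C \to D$ has the right lifting property against $\{i_n\}_{n \in \mathbb{N}}$ if and only if, for every parallel pair $(x,y)$ of $(n-1)$\nbd{}cells of $C$ and every $n$\nbd{}cell $\beta : f(x) \to f(y)$ of $D$, there exists an $n$\nbd{}cell $\alpha : x \to y$ of $C$ with $f(\alpha) = \beta$ (the case $n=0$ reducing to surjectivity on objects, since $\sS_{-1} = \emptyset$). Any such $f$ is tautologically an equivalence in the sense of Paragraph \ref{paragr:defeqomegacat}, since strict lifting is stronger than lifting up to $\sim_{\omega}$; this verifies hypothesis (b).

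The abstract hypotheses on $\W$ in (a) are verified by unwinding the co-inductive definition of $\sim_{\omega}$: two-out-of-three and stability under retracts follow from the compatibility of $\sim_{\omega}$ with the $\omega$\nbd{}categorical operations, while accessibility rests on the observation that $\sim_{\omega}$ is cut out by a countable family of conditions on cells, each preserving sufficiently filtered colimits in $\omega\Cat$.

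The main obstacle is hypothesis (c). The strategy, due to Lafont, Métayer, and Worytkiewicz, is to exhibit an explicit set $J$ of trivial cofibrations — built from Gray-like cylinder objects over the globes $\sD_n$ together with their endpoint inclusions — and to verify, by a direct combinatorial analysis of pushouts of free $\omega$\nbd{}categories, that every $J$\nbd{}cell complex is an equivalence and that every morphism of $\mathrm{cof}(I) \cap \W$ is a retract of a $J$\nbd{}cell complex. This step is the technical heart of the proof in \cite{lafont2010folk}, to which we refer for the complete verification.
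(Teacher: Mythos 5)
The paper offers no argument of its own here: its proof of Theorem \ref{thm:folkms} is a one-line citation of \cite{lafont2010folk}. Your sketch is a reasonable reconstruction of what that reference establishes, and your verification of hypothesis (b) — transposing the lifting problem against $i_n : \sS_{n-1} \to \sD_n$ via the representability of $C \mapsto C_n$ and $C \mapsto \mathrm{Par}_{n-1}(C)$, and observing that strict lifting implies lifting up to $\sim_{\omega}$ — is correct. The substantive difference is the choice of recognition theorem. Lafont, Métayer and Worytkiewicz do not invoke Smith's theorem: they exhibit an explicit set $J$ of generating trivial cofibrations (inclusions that are oplax deformation retracts, the very set this paper later uses to show $\lambda$ is left Quillen) and apply the standard recognition theorem for cofibrantly generated model categories. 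Your Smith-theorem packaging trades the construction of $J$ for the obligation to prove that the class $\W$ of equivalences is accessible and accessibly embedded, and this is the soft spot of your sketch: $\sim_{\omega}$ is defined coinductively, as a greatest fixed point, and such relations do not ``obviously'' commute with filtered colimits. What one actually shows is that a witness for $x \sim_{\omega} y$ involves only countably much data, so membership is detected by sufficiently filtered ($\aleph_1$-filtered) colimits; alternatively accessibility of $\W$ follows \emph{a posteriori} from combinatoriality, but that would be circular in your setup. Since you defer the genuinely hard step — closure of $\W \cap \mathrm{cof}(I)$ under pushout and transfinite composition, equivalently the construction and analysis of $J$ — to \cite{lafont2010folk} in any case, the sketch is acceptable, but you should either establish accessibility of $\W$ honestly or follow the reference's route, which avoids needing it.
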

  \begin{proof}
    This is the main result of \cite{lafont2010folk}.
  \end{proof}
  \begin{paragr}\label{paragr:folkms}
    We refer to the model structure of the previous theorem as the \emph{Folk
      model structure} on $\omega\Cat$. Data of this model structure will often
    be referred to by using the adjective folk, \emph{e.g.\ folk cofibration}. From now on, unless otherwise explicitly specified, we will always consider that $\omega\Cat$ is equipped with this model structure. In particular, $\Ho(\omega\Cat)$ will always be the localization of $\omega\Cat$ with respect to the class of equivalences of $\omega$\nbd{}categories.
  \end{paragr}
  \begin{proposition}
    An $\omega$\nbd{}category is folk cofibrant if and only if it is free.
  \end{proposition}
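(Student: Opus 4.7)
The plan is to treat the two implications separately. For the \emph{free implies cofibrant} direction, suppose $C$ is a free $\omega$\nbd{}category with basis $\Sigma = (\Sigma_n)_{n \in \mathbb{N}}$. The defining pushout squares of a basis express each inclusion $C_{\leq n-1} \hookrightarrow C_{\leq n}$ as a pushout of a coproduct of the generating folk cofibrations $i_n : \sS_{n-1} \to \sD_n$ listed in Theorem \ref{thm:folkms}. Since $C$ is obtained as the colimit of the tower of the $C_{\leq n}$'s starting from the empty $\omega$\nbd{}category (which is the initial object of $\omega\Cat$), the canonical map from the initial object to $C$ is a transfinite composition of pushouts of generating folk cofibrations. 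It is therefore a folk cofibration, and $C$ is cofibrant.

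For the converse, I would use the general fact that in any cofibrantly generated model category every cofibrant object is a retract of a relative cell complex built from the generating cofibrations. Combined with the previous paragraph, this means that every folk cofibrant $\omega$\nbd{}category is a retract of a free one. It therefore suffices to prove that a retract of a free $\omega$\nbd{}category is itself free. This is the main obstacle, and it is precisely the content of the main theorem of \cite{metayer2008cofibrant}, which I would invoke rather than reprove.

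For the benefit of the reader, I would indicate the shape of Métayer's argument: given $s : C \to P$ with retraction $r : P \to C$ and $P$ free with basis $\Sigma$, one constructs a basis of $C$ by induction on dimension, starting from $r(\Sigma_0)$. In higher dimensions one isolates, among the cells of $P$ in the image of $s$, those that are \emph{indecomposable} in a precise sense, exploiting the unique factorisation of cells in a free $\omega$\nbd{}category (this is closely related to the discrete Conduché property of Definition \ref{def:condfun} and Proposition \ref{prop:condfunfree}). The delicate point, and the reason this step is not formal, is to check that the graded set so constructed actually satisfies the pushout property of a basis; the intricate combinatorics of the various compositions $\ast_k$ in a strict $\omega$\nbd{}category is what makes this verification non-trivial and occupies the bulk of \cite{metayer2008cofibrant}.
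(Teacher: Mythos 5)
Your proof is correct and follows essentially the same route as the paper: the ``free implies cofibrant'' direction via the skeletal tower $\emptyset = C_{\leq -1} \to C_{\leq 0} \to \cdots$ whose successive maps are pushouts of coproducts of the generating cofibrations $i_n$, and the converse by appeal to \cite{metayer2008cofibrant}. The additional detail you supply (the retract argument and the sketch of Métayer's construction) is accurate but not needed beyond the citation.
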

  \begin{proof}
    The fact that every free $\omega$\nbd{}category is cofibrant follows immediately from the fact that the $i_n : \sS_{n-1} \to \sD_n$ are cofibrations and that every $\omega$\nbd{}category $C$ is the colimit of the canonical diagram
    \[
    \emptyset = C_{\leq -1} \to C_{\leq 0} \to \cdots \to C_{\leq n} \to C_{\leq n+1} \cdots
    \]
    For the converse, see \cite{metayer2008cofibrant}.
  \end{proof}
  \section{Polygraphic homology}
  \begin{paragr}
    Let $C$ be an $\omega$\nbd{}category. We define a chain complex in non-negative degree $\lambda(C)$ in the following way:
        \begin{itemize}
    \item[-] for $n \in \mathbb{N}$, $\lambda(C)_n$ is the abelian group obtained by quotienting the free abelian group $\mathbb{Z}C_n$ by the congruence generated by the relations
      \[
      x\ast_ky \sim x+y
      \]
      for all $x,y \in C_n$ that are $k$\nbd{}composable,
    \item[-] the differential $\partial : \lambda(C)_n \to \lambda(C)_{n-1}$ is induced by the map
      \[
      \begin{aligned}
        \mathbb{Z}C_n &\to \mathbb{Z}C_{n-1} \\
        x \in C_n &\mapsto t(x)-s(x).
      \end{aligned}
      \]
      \end{itemize}
      The axioms of $\omega$\nbd{}categories imply that $\partial \circ \partial = 0$. 
    Now let $f : C \to D$ be an $\omega$\nbd{}functor. The map
    \[
    \begin{aligned}
      \mathbb{Z}C_n &\to \mathbb{Z}D_{n}\\
      x \in C_n &\mapsto f(x)
      \end{aligned}
    \]
    induces a map
    \[
    \lambda(f)_n : \lambda(C)_n \to \lambda(D)_n.
    \]
    Since $f$ commutes with source and target, we obtain a morphism of chain complexes $(\lambda(f)_n)_{n\in \mathbb{N}}$. This defines a functor
      \[
      \lambda : \omega\Cat \to \Ch,
      \]
      where $\Ch$ is the category of chain complexes in non-negative degree, which we call the \emph{abelianization functor}. 
  \end{paragr}
  \begin{lemma}\label{lemma:abelpol}
    Let $C$ be a \emph{free $\omega$\nbd{}category} and let $\Sigma=(\Sigma_n)_{n\mathbb{N}}$ be its basis. Then for every $n\in\mathbb{N}$, $\lambda(C)_n$ is isomorphic to the free abelian group $\mathbb{Z}\Sigma_n$.
  \end{lemma}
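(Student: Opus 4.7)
The plan is to combine two observations. First, $\lambda$ is a left adjoint (to the forgetful functor $\Ch \simeq \omega\Cat(\Ab) \to \omega\Cat$), so it preserves all colimits. Second, the basis $\Sigma$ presents $C$ as a sequential colimit $C = \colim_k C_{\leq k}$ in which each stage is obtained from the previous one by a pushout along a coproduct of the inclusions $i_n : \sS_{n-1} \to \sD_n$. Thus I can reduce the computation of $\lambda(C)_n$ to an explicit computation of $\lambda$ on the globes and spheres, followed by a colimit in $\Ch$.

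The first step is to record that $\lambda$ annihilates every degenerate cell: for a $k$\nbd{}cell $y$ and $n > k$, the identity $1^{(n)}_y = 1^{(n)}_y \ast_k 1^{(n)}_y$ forces $1^{(n)}_y = 2 \cdot 1^{(n)}_y$ in $\lambda(C)_n$, hence $1^{(n)}_y = 0$. Applied to $\sD_n$, this yields $\lambda(\sD_n)_n \simeq \mathbb{Z}$ (generated by the top cell), $\lambda(\sD_n)_k \simeq \mathbb{Z}^2$ for $0 \leq k < n$ (generated by iterated source and target), and $\lambda(\sD_n)_k = 0$ for $k > n$. The analogous computation gives $\lambda(\sS_{n-1})_k \simeq \mathbb{Z}^2$ for $0 \leq k \leq n-1$ and $0$ otherwise, and $\lambda(i_n) : \lambda(\sS_{n-1}) \to \lambda(\sD_n)$ is the identity in every degree $\leq n-1$ and the zero map $0 \to \mathbb{Z}$ in degree $n$.

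Next, I apply $\lambda$ to the defining pushout square for the basis at stage $n$; since $\lambda$ preserves colimits and pushouts in $\Ch$ are computed degreewise in $\Ab$, the result is a degreewise pushout of abelian groups. All $n$\nbd{}cells of the $(n \shortminus 1)$\nbd{}category $C_{\leq n-1}$ are degenerate, so $\lambda(C_{\leq n-1})_n = 0$, and in degree $n$ the square collapses to the pushout of $0 \leftarrow 0 \to \mathbb{Z}\Sigma_n$. Hence $\lambda(C_{\leq n})_n \simeq \mathbb{Z}\Sigma_n$.

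Finally, I argue that attaching cells in dimension $k+1 > n$ leaves $\lambda(-)_n$ unchanged: repeating the pushout analysis along $i_{k+1}$, the map $\lambda(i_{k+1})$ is an isomorphism in every degree $\leq k$, so $\lambda(C_{\leq k})_n \to \lambda(C_{\leq k+1})_n$ is an isomorphism for $k \geq n$. Since $\lambda$ commutes with the sequential colimit $C = \colim_k C_{\leq k}$, I obtain $\lambda(C)_n \simeq \lambda(C_{\leq n})_n \simeq \mathbb{Z}\Sigma_n$. The only step demanding genuine care is the explicit description of $\lambda$ on the cellular models $\sD_n$ and $\sS_{n-1}$; once degenerates are known to vanish, this is a direct inspection and everything else is a formal manipulation of colimits.
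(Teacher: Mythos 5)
Your argument is correct, but it is genuinely different from the one the paper gives. You compute $\lambda$ cellwise: first kill the degenerate cells via $1^{(n)}_y \ast_k 1^{(n)}_y = 1^{(n)}_y$, then evaluate $\lambda$ explicitly on $\sS_{n-1}$ and $\sD_n$, and finally push the pushout squares defining the basis and the skeletal colimit $C=\colim_k C_{\leq k}$ through the cocontinuous functor $\lambda$. The paper argues instead by representability: for each abelian group $G$ it builds an $n$\nbd{}category $B^nG$ with a single cell in every dimension $<n$ and with $G$ as set of $n$\nbd{}cells (all compositions being addition), checks that $G\mapsto B^nG$ is right adjoint to $C\mapsto\lambda(C)_n$ on $n\Cat$, and observes that an $n$\nbd{}functor $C_{\leq n}\to B^nG$ is the same thing as a set map $\Sigma_n\to\vert G\vert$; the chain of natural isomorphisms $\Hom_{\Ab}(\lambda(C)_n,G)\simeq\Hom_{\Ab}(\mathbb{Z}\Sigma_n,G)$ then concludes by Yoneda. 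The adjunction route is shorter and sidesteps the bookkeeping of degenerate cells and of the congruence on $\mathbb{Z}C_n$, which your route has to carry out by hand on the globes and spheres (you do it correctly, including the point that no relation other than the vanishing of degenerates is imposed there). In exchange, your computation yields strictly more than the statement: the explicit complexes $\lambda(\sS_{n-1})$ and $\lambda(\sD_n)$, and the fact that $\lambda(i_n)$ is an isomorphism in degrees $<n$ and $0\to\mathbb{Z}$ in degree $n$ --- which is precisely the ``simple computation'' the paper later defers to the reader when proving that $\lambda$ is left Quillen. Both arguments ultimately rest on the same input, namely the pushout squares expressing that $C_{\leq n}$ is obtained from $C_{\leq n-1}$ by freely attaching the cells of $\Sigma_n$. (One small remark: the left-adjointness of $\lambda$ that you invoke is only stated in the paper \emph{after} this lemma, but its proof is independent, so there is no circularity.)
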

  \begin{proof}
    Let $G$ be an abelian group. For every $n \in \mathbb{N}$, we define an $n$\nbd{}category $B^nG$ with:
    \begin{itemize}
    \item[-] $(B^nG)_{k}$ is a singleton set for every $k < n$,
    \item[-] $(B^nG)_n = G$
    \item[-] for all $x$ and $y$ in $G$ and $i<n$,
      \[x \ast_i y := x +y.\]
    \end{itemize}
    It is straightforward to check that this defines an $n$\nbd{}category. Note that when $n=1$, the previous definition would still make sense without the hypothesis that $G$ be abelian, but for $n\geq 2$ this hypothesis is necessary because of the Eckmann--Hilton argument. For $n=0$, we only needed that $G$ was a set.

    This defines a functor
    \[
    \begin{aligned}
      B^n : \Ab &\to n\Cat\\
      G &\mapsto B^nG,
    \end{aligned}
    \]
    which is easily seen to be right adjoint to the functor
    \[
    \begin{aligned}
      n\Cat &\to \Ab\\
      C &\mapsto \lambda(C)_n.
    \end{aligned}
    \]
    Now, if $C$ is an $\omega$\nbd{}category then $\lambda(C_{\leq n })_n=\lambda(C)_n$ and if $C$ is free with basis $\Sigma=(\Sigma_n)_{n\in \mathbb{N}}$, then for every abelian group $G$ there is a natural isomorphism
    \[
    \Hom_{n\Cat}(C_{\leq n},B^nG) \simeq \Hom_{\Set}(\Sigma_n,\vert G \vert ),
    \]
    where $|G|$ is the underlying set of $G$. Altogether, we have
    \[
    \begin{aligned}
      \Hom_{\Ab}(\lambda(C)_n,G) &\simeq \Hom_{\Ab}(\lambda(C_{\leq n})_n,G)\\
      &\simeq \Hom_{n\Cat}(C_{\leq n},B^nG)\\
      &\simeq \Hom_{\Set}(\Sigma_n,\vert G \vert)\\
      &\simeq \Hom_{\Ab}(\mathbb{Z}\Sigma_n,G).\qedhere
    \end{aligned}
    \]
    \end{proof}
  \begin{lemma}\label{lemma:adjlambda}
    The functor $\lambda$ is a left adjoint.
  \end{lemma}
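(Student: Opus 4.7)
The strategy is to construct a right adjoint $\Gamma : \Ch \to \omega\Cat$ explicitly, generalizing the functors $B^n$ from the previous lemma. As hinted in the introduction, this right adjoint is, up to the Dold--Kan-type equivalence $\Ch \simeq \omega\Cat(\Ab)$ of \cite{bourn1990another}, just the forgetful functor from $\omega$-categories internal to abelian groups; we give a direct description.

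Given $K = (K_n,\partial)_{n\geq 0}\in\Ch$, the plan is to define $\Gamma(K)$ by letting its set of $n$-cells be $\prod_{i=0}^n K_i$, with source and target of $(a_0,\ldots,a_n)$ equal to $(a_0,\ldots,a_{n-1})$ and $(a_0,\ldots,a_{n-2},a_{n-1}+\partial a_n)$ respectively, with units given by appending a zero, and with $k$-composition (when defined) keeping the bottom $k{+}1$ coordinates of the right-hand cell and summing the remaining coordinates. The relation $\partial^2=0$ is precisely what ensures the globular identities $ss=st$ and $ts=tt$. One then checks that $\Gamma(K)$ satisfies the $\omega$-category axioms: associativity, unitality, and the interchange law all reduce to identities in abelian groups using associativity and commutativity of addition.

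Once $\Gamma$ is constructed, the adjunction bijection $\Hom_{\omega\Cat}(C,\Gamma(K))\simeq\Hom_{\Ch}(\lambda(C),K)$ is obtained as follows. Given an $\omega$-functor $f:C\to\Gamma(K)$, let $\phi_n:C_n\to K_n$ be its top-coordinate component. Functoriality of $f$ with respect to $\ast_k$ for every $k<n$ forces $\phi_n(x\ast_k y)=\phi_n(x)+\phi_n(y)$, so $\phi_n$ descends to a group homomorphism $\lambda(C)_n\to K_n$; compatibility of $f$ with source and target yields exactly the chain-map condition $\partial\phi_n=\phi_{n-1}\circ(t-s)$. Conversely, a chain map $\lambda(C)\to K$ determines, by induction on dimension, a unique $\omega$-functor into $\Gamma(K)$, and these constructions are mutually inverse. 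Naturality in $C$ and $K$ is immediate.

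The main obstacle is the verification in the second paragraph: choosing the composition rules so that every $\omega$-category axiom holds. This is a routine but lengthy bookkeeping exercise, and it is the point where $\partial^2=0$ and the commutativity of the $K_i$ both intervene essentially, reflecting the Eckmann--Hilton phenomenon that underlies the equivalence $\omega\Cat(\Ab)\simeq\Ch$.
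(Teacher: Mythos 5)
Your proof is correct, but it takes a different route from the paper. The paper's proof is a two-line citation: it invokes Bourn's equivalence $\Ch \simeq \omega\Cat(\Ab)$ and asserts that, under this identification, $\lambda$ \emph{is} the left adjoint of the forgetful functor $\omega\Cat(\Ab) \to \omega\Cat$ (the existence of that left adjoint, and its identification with the abelianization functor as defined in the paper, are not spelled out). You instead unwind what the composite $\Ch \simeq \omega\Cat(\Ab) \to \omega\Cat$ actually is and verify the adjunction by hand: your $\Gamma(K)$, with $n$-cells $\prod_{i\leq n}K_i$, target given by adding $\partial a_n$ to the top remaining coordinate, and $\ast_k$ keeping the bottom $k{+}1$ coordinates of the cell whose target is being matched and summing the rest, is exactly the underlying $\omega$-category of the internal $\omega$-category corresponding to $K$. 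I checked the key points: $\partial^2=0$ gives globularity, your choice of which factor contributes the bottom coordinates is the right one for the convention $s_k(x)=t_k(y)$, interchange and units reduce to additivity, and the unit/counit analysis correctly recovers both the defining relations $x\ast_k y\sim x+y$ of $\lambda(C)_n$ and the chain-map condition $\partial\phi_n=\phi_{n-1}\circ(t-s)$. What your approach buys is self-containedness and an explicit description of the adjunction, which in particular makes transparent the identification of the left adjoint with the paper's concrete $\lambda$ — a point the paper glosses over; what it costs is the lengthy axiom verification you defer as bookkeeping, which is genuinely routine and acceptable to leave to the reader (the paper does the same for the analogous functors $B^n$ in Lemma \ref{lemma:abelpol}).
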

  \begin{proof}
    The category of chain complexes is equivalent to the category $\omega\Cat(\Ab)$ of $\omega$\nbd{}categories internal to abelian groups (see \cite[Theorem 3.3]{bourn1990another}) and with this identification, the functor $\lambda : \omega\Cat \to \omega\Cat(\Ab)$ is nothing but the left adjoint of the canonical forgetful functor $\omega\Cat(\Ab) \to \omega\Cat$.
  \end{proof}
  \begin{paragr}
    Let $u, v : C \to D$ be two $\omega$\nbd{}functors. An \emph{oplax transformation} $\alpha$ from $u$ to $v$ consists of the following data:
    \begin{itemize}
    \item[-] for every $0$-cell $x$ of $C$, a $1$-cell of $D$
      \[
      \alpha_x : u(x) \to v(x),
      \]
      \item[-]for every $n$\nbd{}cell of $x$ of $C$ with $n>0$, an $(n+1)$\nbd{}cell of $D$
    \[
    \alpha_x : \alpha_{t_{n-1}(x)}\ast_{n-1}\cdots\ast_1\alpha_{t_0(x)}\ast_0u(x) \to v(x)\ast_0\alpha_{s_0(x)}\ast_1\cdots\ast_{n-1}\alpha_{s_{n-1}(x)}
    \]
    subject to the following axioms:
    \begin{enumerate}
    \item for every $n$\nbd{}cell $x$,
      \[\alpha_{1_x}=1_{\alpha_x},\]
    \item for all $0\leq k < n$, for all $n$\nbd{}cells $x$ and $y$ that are $k$\nbd{}composable,
      \[
      \begin{multlined}
      \alpha_{x \ast_k y}={\left(v(t_{k+1}(x))\ast_0\alpha_{s_0(x)}\ast_1\cdots\ast_{n-1}\alpha_{s_{n-1}(x)}\ast_k\alpha_y\right)}\\
            {\ast_{k+1}\left(\alpha_{t_{n-1}(x)}\ast_{n-1}\cdots\ast_1\alpha_{t_0(x)}\ast_0u(s_{k+1}(y))\right)}.
            \end{multlined}
      \]
      \end{enumerate}
    \end{itemize}
    We use the notation $\alpha : u \Rightarrow v$ to say that $\alpha$ is an oplax transformation from $u$ to $v$.
  \end{paragr}
  \begin{paragr}
    Let
    \[
    \begin{tikzcd}
      B \ar[r,"f"] & C \ar[r,shift left,"u"]  \ar[r,shift right,"v"']&D \ar[r,"g"] &E
    \end{tikzcd}
    \]
    be a diagram in $\omega\Cat$ and $\alpha : u \Rightarrow v$ be an oplax transformation. 
    The data of
    \[
    (g\star \alpha)_x := g(\alpha_x)
    \]
    for each cell $x$ of $C$ (resp. 
    \[
    (\alpha \star f)_x :=\alpha_{f(x)}
    \]
    for each cell $x$ of $B$) defines an oplax transformation from $g u$ to $g
    v$ (resp.\ $u f$ to $v f$) which we denote by $g\star \alpha$ (resp.\ $\alpha \star f$).
  \end{paragr}
   \begin{lemma}\label{lemma:abeloplax}
    Let $u, v : C \to D$ be two $\omega$\nbd{}functors. If there is an oplax transformation $\alpha : u \Rightarrow v$, then there is a homotopy of chain complexes from $\lambda(u)$ to $\lambda(v)$.
   \end{lemma}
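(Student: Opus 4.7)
The plan is to construct an explicit chain homotopy $h \colon \lambda(u) \simeq \lambda(v)$ by setting
\[
h_n([x]) := [\alpha_x] \in \lambda(D)_{n+1}
\]
on each generating $n$-cell $x$ of $C$, and extending $\mathbb{Z}$-linearly to $\lambda(C)_n \to \lambda(D)_{n+1}$. The verification then splits into two parts: first checking well-definedness of $h_n$ on the quotient defining $\lambda(C)_n$, and then establishing the chain-homotopy identity $\partial_{n+1} \circ h_n + h_{n-1} \circ \partial_n = \lambda(v)_n - \lambda(u)_n$.

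For well-definedness, the key mechanism is that in any $\lambda(E)_m$ every composition becomes a sum and every degenerate cell vanishes. The latter is immediate: for a degenerate $m$-cell $1_z$ one has $1_z = 1_z \ast_k 1_z$, whence $[1_z] = 2[1_z]$ in the abelian group $\lambda(E)_m$ and therefore $[1_z] = 0$. Applying axiom~(2) of oplax transformations to a pair of $k$-composable $n$-cells $x, y$, I would expand $\alpha_{x \ast_k y}$ as the $\ast_{k+1}$-composite of two multi-compositions; after abelianizing, the outer $\ast_{k+1}$ becomes a sum, each inner multi-composition becomes a sum of its factors, and every factor of dimension strictly less than $n+1$ (all the $\alpha_{s_i(x)}, \alpha_{t_i(x)}$ with $i<n$, together with the $u(\text{-})$ and $v(\text{-})$ whiskers) vanishes as a degenerate $(n+1)$-cell. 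What survives is $[\alpha_x] + [\alpha_y]$, which is exactly what is needed to conclude that $h_n$ factors through the relation $[x \ast_k y] = [x] + [y]$ defining $\lambda(C)_n$. Axiom~(1) likewise ensures $[\alpha_{1_x}] = [1_{\alpha_x}] = 0$, consistent with $[1_x] = 0$ in $\lambda(C)$.

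For the chain-homotopy identity, I would read off the source and target of $\alpha_x$ from the definition,
\[
s(\alpha_x) = \alpha_{t_{n-1}(x)} \ast_{n-1} \cdots \ast_0 u(x), \qquad t(\alpha_x) = v(x) \ast_0 \alpha_{s_0(x)} \ast_1 \cdots \ast_{n-1} \alpha_{s_{n-1}(x)},
\]
and compute $\partial_{n+1} h_n([x]) = [t(\alpha_x)] - [s(\alpha_x)]$ in $\lambda(D)_n$. Every $\alpha_{s_i(x)}, \alpha_{t_i(x)}$ with $i < n-1$ has dimension strictly less than $n$ and so contributes $0$ in $\lambda(D)_n$ by the same degeneracy argument, leaving
\[
\partial_{n+1} h_n([x]) = [v(x)] + [\alpha_{s_{n-1}(x)}] - [\alpha_{t_{n-1}(x)}] - [u(x)].
\]
Combined with $h_{n-1} \partial_n([x]) = [\alpha_{t(x)}] - [\alpha_{s(x)}] = [\alpha_{t_{n-1}(x)}] - [\alpha_{s_{n-1}(x)}]$, the $\alpha$-terms cancel and one recovers precisely $\lambda(v)_n([x]) - \lambda(u)_n([x])$. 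The case $n = 0$ is handled separately with the convention $h_{-1} = 0$, using that $\alpha_x$ for a $0$-cell goes from $u(x)$ to $v(x)$. The main obstacle throughout is simply the bookkeeping of the degeneracy vanishing argument applied to the explicit (and lengthy) formulas of axioms~(1)--(2): once one is willing to systematically drop every term of dimension strictly smaller than the ambient degree, everything collapses to the expected identities.
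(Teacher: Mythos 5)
Your proof is correct and is precisely the paper's argument: the paper defines the same map $h_n([x]) := [\alpha_x]$ and leaves the verification of its linearity and of the identity $\partial h_n + h_{n-1}\partial = \lambda(v)-\lambda(u)$ to the reader, which is exactly the content you have supplied. Your degeneracy-vanishing mechanism ($[1^{(m)}_z]=0$ because $1^{(m)}_z = 1^{(m)}_z \ast_k 1^{(m)}_z$), applied to collapse the oplax-transformation axioms and the source/target formulas for $\alpha_x$, is the intended way to fill in those details.
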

   \begin{proof}
     For every $n$\nbd{}cell $x$ of $C$ (resp.\ $D$), let us use the notation $[x]$ for the image of $x$ in $\lambda(C)_n$ (resp.\ $\lambda(D)_n$).

  Let $h_n$ be the map
     \[
     \begin{aligned}
       h_n : \lambda(C)_n &\to \lambda(D)_{n+1}\\
       [x] & \mapsto [\alpha_x].
       \end{aligned}
     \]
     The definition of oplax transformations implies that $h_n$ is linear and that for every $n$\nbd{}cell $x$ of $C$,
     \[
     \partial (h_n(x)) + h_{n-1}(\partial(x)) = [v(x)] - [u(x)].
     \]
     Details are left to the reader. 
   \end{proof}
   \begin{paragr}
     Recall that the category of chain complexes in non-negative degree $\Ch$
     has a cofibrantly generated model structure where:
     \begin{itemize}
     \item[-] the weak equivalences are the quasi-isomorphisms, i.e.\ the morphisms of chain complexes that induce an isomorphism on homology groups,
              \item[-] the cofibrations are the morphisms of chain complexes $f: X\to Y$ such that for every $n\geq 0$, $f_n : X_n \to Y_n$ is a monomorphism with projective cokernel,
     \item[-] the fibrations are the morphisms of chain complexes $f : X \to Y$ such that for every $n>0$, $f_n : X_n \to Y_n$ is an epimorphism.
     \end{itemize}
     (See for example \cite[Section 7]{dwyer1995homotopy}.)
     From now on, we will implicitly consider that the category $\Ch$ is equipped with this model structure. 
   \end{paragr}
   \begin{proposition}
     The functor $ \lambda : \omega\Cat \to \Ch$ is left Quillen.
   \end{proposition}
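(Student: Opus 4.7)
The plan is to verify the two defining conditions of a left Quillen functor. Since $\lambda$ is already a left adjoint (Lemma \ref{lemma:adjlambda}), it remains to show that $\lambda$ preserves cofibrations and trivial cofibrations.

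For cofibrations, the Folk structure is cofibrantly generated by $\{i_n : \sS_{n-1} \to \sD_n\}_{n\in\mathbb{N}}$ (Theorem \ref{thm:folkms}), so because $\lambda$ is a left adjoint (hence preserves colimits and retracts), it suffices to verify that each $\lambda(i_n)$ is a cofibration in $\Ch$. Both $\sS_{n-1}$ and $\sD_n$ are free $\omega$\nbd{}categories whose bases consist of two parallel $k$\nbd{}cells (the iterated sources and targets of the top cell) for each dimension $k \leq n-1$, together with, in the case of $\sD_n$, one additional $n$\nbd{}cell. By Lemma \ref{lemma:abelpol}, $\lambda$ applied to each of these is free abelian degreewise on the corresponding basis, and since $i_n$ identifies each generator of $\sS_{n-1}$ with the same-named generator of $\sD_n$, the map $\lambda(i_n)$ is an isomorphism of free abelian groups in each degree $<n$, the inclusion $0 \hookrightarrow \mathbb{Z}$ in degree $n$, and zero in higher degrees. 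This is a degreewise monomorphism with free (hence projective) cokernel --- precisely the defining condition of a cofibration of $\Ch$.

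For trivial cofibrations, the key tool is Lemma \ref{lemma:abeloplax}. I would prove the stronger statement that $\lambda$ sends folk weak equivalences between free (equivalently, folk-cofibrant) $\omega$\nbd{}categories to quasi-isomorphisms; combined with preservation of cofibrations and with the fact that generating folk trivial cofibrations can be chosen with free source and target (as arranged in \cite{lafont2010folk}), this will imply preservation of trivial cofibrations via the stability of trivial cofibrations in $\Ch$ under pushouts, transfinite compositions, and retracts. All objects of $\omega\Cat$ being folk-fibrant, a folk weak equivalence $f : C \to D$ between free $\omega$\nbd{}categories admits a homotopy inverse $g : D \to C$ together with folk homotopies $gf \sim \id_C$ and $fg \sim \id_D$. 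Since the folk cylinder is arranged so that homotopies with free source correspond to oplax transformations, Lemma \ref{lemma:abeloplax} produces chain homotopies $\lambda(g) \lambda(f) \sim \id_{\lambda(C)}$ and $\lambda(f) \lambda(g) \sim \id_{\lambda(D)}$, exhibiting $\lambda(f)$ as a chain-homotopy equivalence, hence a quasi-isomorphism.

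The main obstacle is the dictionary between folk homotopies and oplax transformations --- the ingredient that makes Lemma \ref{lemma:abeloplax} applicable to the homotopy inverse. This dictionary comes from the explicit construction of the folk cylinder object in \cite{lafont2010folk}; once it is in place, the rest of the argument is a mechanical verification, and the two preservation conditions together establish that $\lambda$ is left Quillen.
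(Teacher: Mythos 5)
Your treatment of the adjoint condition and of cofibrations is essentially the paper's: the paper likewise reduces to the generators $i_n$ and invokes Lemma \ref{lemma:abelpol} to see that $\lambda(i_n)$ is an isomorphism in degrees below $n$ and $0\to\mathbb{Z}$ in degree $n$, hence a monomorphism with projective cokernel. Where you diverge is on trivial cofibrations. You prove the stronger statement that $\lambda$ sends every folk weak equivalence between free $\omega$\nbd{}categories to a quasi-isomorphism, by producing a homotopy inverse (using that all objects are folk fibrant) and translating the resulting homotopies into oplax transformations so that Lemma \ref{lemma:abeloplax} applies. The paper instead cites a sharper fact about the generators themselves: one can choose a set $J$ of generating trivial cofibrations such that each $j : X \to Y$ in $J$ comes equipped with an explicit retraction $r$ satisfying $r\circ j = 1_X$ and an explicit oplax transformation $j\circ r \Rightarrow 1_Y$ (from \cite[Sections 4.6 and 4.7]{lafont2010folk} and \cite[Remarque B.1.16]{ara2016joint}). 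Lemma \ref{lemma:abeloplax} then immediately exhibits $\lambda(j)$ as a chain homotopy equivalence, with no cylinder-object theory needed. Your route buys a more general statement (invariance of $\lambda$ under arbitrary weak equivalences of cofibrant objects, which is essentially Métayer's invariance theorem), while the paper's buys economy: it needs only the cited form of the generators.

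The one point you should not leave as a black box is the ``dictionary'' you flag at the end. For your argument you need to know that left homotopies in the folk model structure, taken with respect to a cylinder on a cofibrant object, are (or can be converted into) oplax transformations in the sense of Lemma \ref{lemma:abeloplax}. The cylinder constructed in \cite{lafont2010folk} is built from \emph{reversible} cells, and the identification of its homotopies with (special) oplax transformations is precisely the content of the references the paper invokes (\cite[Remarque B.1.16]{ara2016joint}, \cite{ara2019folk}); it also requires knowing that the relevant cylinder really is a cylinder object, i.e.\ that its projection to $C$ is a folk equivalence. None of this is false, but it is the genuinely nontrivial input of your argument, and as written it is asserted rather than located. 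Citing the specific statement about the generating trivial cofibrations, as the paper does, closes this gap with less machinery.
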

   \begin{proof}
     The fact that $\lambda$ is a left adjoint is Lemma \ref{lemma:adjlambda}.

     A simple computation using Lemma \ref{lemma:abelpol} shows that for every $n\in \mathbb{N}$,
     \[
     \lambda(i_n) : \lambda(\sS_{n-1}) \to \lambda(\sD_{n})
     \]
     is a monomorphism with projective cokernel. This shows that $\lambda$ preserves cofibrations.

     Then, we know from \cite[Sections 4.6 and 4.7]{lafont2010folk} and \cite[Remarque B.1.16]{ara2016joint} (see also \cite[Paragraph 3.11]{ara2019folk}) that there exists a set of generating trivial cofibrations $J$ of the Folk model structure on $\omega\Cat$ such that every $j : X \to Y$ in $J$ satisfies the following conditions:
     \begin{itemize}
     \item[-] there exists $r : Y \to X$ such that $r\circ j = 1_X$,
       \item[-] there exists an oplax transformation $\alpha : j\circ r \Rightarrow 1_Y$.
     \end{itemize}
     From Lemma \ref{lemma:abeloplax}, we conclude that $\lambda$ preserves trivial cofibrations. 
   \end{proof}
     The previous proposition leads the following definition:
   \begin{definition}\label{def:polhom}
     We define the \emph{polygraphic homology functor}
     \[
     \sH^{\mathrm{pol}} : \Ho(\omega\Cat) \to \Ho(\Ch)
     \]
     as the total left derived functor of $\lambda : \omega\Cat \to \Ch$.
     \end{definition}
   \section{Nerve of \texorpdfstring{$\omega$}{\textomega}-categories and the comparison map}
     \begin{paragr}\label{paragr:simpset}
    We denote by $\Delta$ the category whose objects are the finite non-empty totally ordered sets $[n]=\{0<\cdots<n\}$ and whose morphisms are the non-decreasing maps. For $n \in \mathbb{N}$ and $0\leq i\leq n$, we denote by
    \[
    \delta^i : [n-1] \to [n] 
    \]
    the only injective increasing map whose image does not contain $i$.

    The category $\Psh{\Delta}$ of simplicial sets is the category of presheaves on $\Delta$. For a simplicial set $X$, we use the notations
    \[
    \begin{aligned}
      X_n &:= X([n]) \\
      \partial_i &:= X(\delta^i): X_n \to X_{n\shortminus 1}.
    \end{aligned}
    \]
    Elements of $X_n$ are referred to as \emph{$n$\nbd{}simplices of $X$}. 
     \end{paragr}
       \begin{paragr}
From now on, we will consider that the category $\Psh{\Delta}$ is equipped with the model structure defined by Quillen in \cite{quillen1967homotopical}. A \emph{weak equivalence of simplicial sets} is a weak equivalence for this model structure.
  \end{paragr}
   \begin{paragr}
     We denote by $\Or : \Delta \to \omega\Cat $ the cosimplicial object introduced by Street in \cite{street1987algebra}. The $\omega$\nbd{}category $\Or_n$ is the \emph{$n$\nbd{}oriental}. For a definition and basic properties of this cosimplicial object we refer to \emph{op.~cit.}, \cite{steiner2004omega} and \cite[Chapitre 7]{ara2016joint}.

     For every $n\in \mathbb{N}$, the $\omega$\nbd{}category $\Or_n$ is free and the set of generating $k$\nbd{}cells is canonically isomorphic to the set of increasing sequences \[0 \leq i_0 < i_1 < \cdots < i_{k} \leq n.\] We will denote such a generating cell by $\langle i_0i_1\cdots i_{k} \rangle$. In particular, $\Or_n$ is an $n$\nbd{}category and it has a unique generating $n$\nbd{}cell, namely $\langle 012\cdots n\rangle$, which we call the \emph{principal cell} of $\Or_n$.
   \end{paragr}
   Here are some pictures in low dimension:
\[
  \Or_0 = \langle 0 \rangle,
  \]
  \[
  \Or_1=\begin{tikzcd}
    \langle 0 \rangle \ar[r,"\langle 01 \rangle"] &\langle 1 \rangle,
    \end{tikzcd}
  \]
  \[
  \Or_2=
  \begin{tikzcd}
    &\langle 1 \rangle \ar[rd,"\langle 12 \rangle"]& \\
    \langle 0 \rangle \ar[ru,"\langle 01 \rangle"]\ar[rr,"\langle 02 \rangle"',""{name=A,above}]&&\langle 2 \rangle.
    \ar[Rightarrow,from=A,to=1-2,"\langle 012 \rangle"]
    \end{tikzcd}
  \]
  \begin{paragr}\label{paragr:nerve}
    For every $\omega$\nbd{}category $X$, the \emph{nerve of $X$} is the simplicial set $N_{\omega}(X)$ defined as
    \[
    \begin{aligned}
     N_{\omega}(X) : \Delta^{\mathrm{op}} &\to \Set\\
      [n] &\mapsto \Hom_{\omega\Cat}(\Or_n,X).
      \end{aligned}
    \]
   By post-composition, this yields a functor
  \[
  \begin{aligned}
  N_{\omega} : \omega\Cat &\to \Psh{\Delta} \\
  X &\mapsto N_{\omega}(X).
  \end{aligned}
  \]
  Note that when $X$ is a $1$-category, $N_{\omega}(X)$ is canonically isomorphic to the usual nerve of $X$, that is, the simplicial set
      \[
    \begin{aligned}
     \Delta^{\mathrm{op}} &\to \Set\\
      [n] &\mapsto \Hom_{\Cat}([n],X),
      \end{aligned}
    \]
    where $[n]$ is seen as a $1$-category.
    
    By the usual Kan extension technique, $\Or : \Delta \to \omega\Cat$ can be extended to a functor \[c_{\omega} : \Psh{\Delta} \to \omega\Cat,\]
    which is left adjoint to the nerve functor $N_{\omega}$.
  \end{paragr}
  \begin{lemma}\label{lemma:nervehomotopical}
The nerve functor $N_{\omega} : \omega\Cat \to \Psh{\Delta}$ sends the equivalences of $\omega$\nbd{}categories to weak equivalences of simplicial sets.    
  \end{lemma}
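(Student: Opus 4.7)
The plan is to factor any equivalence of $\omega$\nbd{}categories $f : C \to D$, using the Folk model structure of Theorem \ref{thm:folkms}, as $C \xrightarrow{j} E \xrightarrow{p} D$ where $j$ is a Folk trivial cofibration and $p$ is a Folk trivial fibration. By two\nbd{}out\nbd{}of\nbd{}three for simplicial weak equivalences, it will suffice to show that both $N_{\omega}(j)$ and $N_{\omega}(p)$ are weak equivalences of simplicial sets, and I will handle each class separately.

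For a Folk trivial fibration $p : E \to D$, I will prove the stronger statement that $N_{\omega}(p)$ is a trivial Kan fibration. By adjointness, an extension problem for $N_{\omega}(p)$ against $\partial \Delta^n \hookrightarrow \Delta^n$ transposes to a lifting problem in $\omega\Cat$ against the inclusion $\partial \Or_n \hookrightarrow \Or_n$ of the sub\nbd{}$\omega$\nbd{}category generated by the proper faces $\langle i_0 \cdots i_k \rangle$ with $\{i_0,\ldots,i_k\} \subsetneq \{0,\ldots,n\}$. Since $\Or_n$ is free with principal generator $\langle 01 \cdots n \rangle$, this inclusion is a pushout of $i_n : \sS_{n-1} \to \sD_n$ along the cellular boundary of the principal cell, so a diagonal lift exists by the very definition of a Folk trivial fibration.

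For a Folk trivial cofibration $j$, the plan hinges on a nerve\nbd{}analogue of Lemma \ref{lemma:abeloplax}: any oplax transformation $\alpha : u \Rightarrow v$ between $\omega$\nbd{}functors $u,v : C \to D$ should induce a simplicial homotopy $N_{\omega}(u) \simeq N_{\omega}(v)$. Concretely, each $n$\nbd{}simplex $\sigma : \Or_n \to C$ will be sent to a family of $(n+1)$\nbd{}simplices of $N_{\omega}(D)$ obtained by pasting the cells $\alpha_{\sigma(\langle i_0 \cdots i_k \rangle)}$ using axioms (1)--(2) of an oplax transformation; this is essentially the image under $N_{\omega}$ of the oplax cylinder on $\sigma$, reflecting the relation between $\Or_{n+1}$ and the Gray tensor product $\Or_n \otimes \Or_1$. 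Granted this, I invoke the set $J$ of generating trivial cofibrations used in the proof that $\lambda$ is left Quillen: each $j \in J$ admits a retraction $r$ and an oplax transformation $jr \Rightarrow 1$, so $N_{\omega}(j)$ is a simplicial homotopy equivalence, and in particular a simplicial weak equivalence. To propagate from $J$ to all Folk trivial cofibrations (which are retracts of transfinite compositions of pushouts of elements of $J$), one uses closure of simplicial weak equivalences under retracts, together with the fact that cellular attachments of polygraphs interact well with $N_{\omega}$ via the free generation of orientals, and that $N_{\omega}$ preserves filtered colimits (each $\Or_n$ being finitely presentable in $\omega\Cat$).

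The main obstacle is the nerve\nbd{}analogue of Lemma \ref{lemma:abeloplax}: producing an explicit simplicial homotopy from the data of an oplax transformation requires a combinatorially delicate construction of the required $(n+1)$\nbd{}simplices, which in substance amounts to exhibiting $\Or_n \otimes \Or_1$ as an appropriate pasting of copies of $\Or_{n+1}$ and tracking the compatibility with Street's face maps. Once this technical lemma is secured, the remainder of the argument is essentially bookkeeping in cellular model\nbd{}category style.
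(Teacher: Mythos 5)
Your treatment of trivial fibrations is correct and is essentially the paper's own argument: by adjunction one reduces to showing that $c_{\omega}$ sends the boundary inclusions $\partial\Delta_n \to \Delta_n$ to folk cofibrations, and $c_{\omega}(\partial\Delta_n \to \Delta_n)$ is the inclusion $(\Or_n)_{\leq n-1}\to \Or_n$, which is a pushout of $i_n$ because $\Or_n$ is free with a single generating $n$\nbd{}cell. The trouble lies in everything after that. First, your entire trivial-cofibration half is unnecessary: every $\omega$\nbd{}category is fibrant for the Folk model structure, so the dual form of Ken Brown's Lemma says that a functor sending folk trivial fibrations to simplicial weak equivalences automatically sends \emph{all} folk weak equivalences to simplicial weak equivalences. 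This is exactly how the paper concludes, and it lets you dispense with the factorization, the oplax-transformation lemma, and the cellular induction in one stroke.

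Second, and more seriously, the trivial-cofibration half as you describe it contains a genuine gap. Even granting the (true but substantial) fact that an oplax transformation induces a simplicial homotopy on nerves — which would give that $N_{\omega}(j)$ is a weak equivalence for $j$ in the generating set $J$ — your proposed propagation to all folk trivial cofibrations via ``retracts of transfinite compositions of pushouts of elements of $J$'' does not go through. The functor $N_{\omega}$ is a \emph{right} adjoint: it preserves neither pushouts nor transfinite compositions, so the class of $\omega$\nbd{}functors whose nerve is a weak equivalence, while closed under retracts and satisfying 2-out-of-3, is not closed under pushout or transfinite composition for any formal reason. The phrase ``cellular attachments of polygraphs interact well with $N_{\omega}$'' is precisely the missing content; making it precise would require showing, for instance, that $N_{\omega}$ carries pushouts along members of $J$ to homotopy pushouts, which is nothing like a bookkeeping step. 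Ken Brown's Lemma is the standard device for sidestepping exactly this obstruction, and it applies here because all objects are folk-fibrant.
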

  In particular, this means that the nerve functor induces a functor
  \[
  \overline{N}_{\omega}: \Ho(\omega\Cat) \to \Ho(\Psh{\Delta}).
  \]
  \begin{proof}
    Since every $\omega$\nbd{}category is fibrant for the Folk model structure
    \cite[Proposition 9]{lafont2010folk}, it follows from Ken Brown's Lemma
    \cite[Lemma 1.1.12]{hovey2007model} that it suffices to show that the nerve
    sends the folk trivial fibrations to weak equivalences of simplicial sets.
    In particular, it suffices to show the stronger condition that the nerve
    sends the folk trivial fibrations to trivial fibrations of simplicial sets.

    By adjunction, this is equivalent to showing that the functor $c_{\omega} :
    \Psh{\Delta} \to \omega\Cat$ sends the cofibrations of simplicial sets to folk cofibrations. Since $c_{\omega}$ is cocontinuous and the cofibrations of simplicial sets are generated by the canonical inclusions
    \[
    \partial \Delta_n \to \Delta_n
    \]
    for $n \in \mathbb{N}$, it suffices to show that $c_{\omega}$ sends these inclusions to folk cofibrations.

    Now, it follows from any reference on orientals previously cited that the image of the inclusion $\partial \Delta_n \to \Delta_n$ by $c_{\omega}$ can be identified with the canonical inclusion
    \[
    (\Or_n)_{\leq n-1} \to \Or_n.
    \]
    Since $\Or_n$ is free, this last morphism is by definition a pushout of a coproduct of folk cofibrations, hence a folk cofibration.
  \end{proof}
  \begin{paragr}
    Let $X$ be a simplicial set. We denote by $K_n(X)$ the abelian group of \emph{$n$\nbd{}chains of $X$}, that is the free abelian group on $X_n$, and by $\partial : K_n(X) \to K_{n \shortminus 1}(X)$ the linear map defined for $x \in X_n$ by
    \[
    \partial(x)=\sum_{0\leq i \leq n}(\shortminus 1)^{i}\partial_i(x).
    \]
    It follows from the simplicial identities \cite[Section I.1]{goerss2009simplicial} that $\partial \circ \partial = 0$ and thus, the previous data defines a chain complex $K(X)$. 
    This canonically defines a functor
    \[
    K : \Psh{\Delta} \to \Ch
  \]
  in the expected way.
  \end{paragr}
  \begin{paragr}
    Recall that an $n$\nbd{}simplex $x$ of a simplicial set $X$ is \emph{degenerate} if there exists a surjective non-decreasing map $\varphi : [n] \to [k]$ with $k<n$ and a $k$\nbd{}simplex $x'$ of $X$ such that $X(\varphi)(x')=x$. For every $n \in \mathbb{N}$, we define $D_n(X)$ as the subgroup of $K_n(X)$ spanned by the degenerate $n$\nbd{}simplices.

    We denote by $\kappa_n(X)$ the abelian group of \emph{normalized chain complex},
    \[
    \kappa_n(X)=K_n(X)/{D_n(X)}.
    \]
    It follows from the simplicial identities that $\partial(D_n(X))\subseteq D_{n\shortminus 1}(X)$ and we denote by
    \[\partial : \kappa_n(X) \to \kappa_{n\shortminus 1}(X)\]
    the map induced by the differential of $K(X)$. This data defines a chain complex $\kappa(X)$ that we call the \emph{normalized chain complex of $X$}. This yields a functor
    \[
    \kappa : \Psh{\Delta} \to \Ch.
    \]
  \end{paragr}
  \begin{lemma}\label{lemma:normalizedquillen}
    The functor $\kappa : \Psh{\Delta} \to \Ch$ is left Quillen and sends the weak equivalences of simplicial sets to quasi-isomorphisms.
  \end{lemma}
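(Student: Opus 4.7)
The plan is to verify three separate properties: that $\kappa$ is a left adjoint, that it preserves cofibrations, and that it sends every weak equivalence of simplicial sets to a quasi-isomorphism. The third assertion is the second claim of the lemma, and together with the other two it also yields preservation of trivial cofibrations, so $\kappa$ is left Quillen.

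First I would factor $\kappa$ as the composite of the free simplicial abelian group functor $\mathbb{Z}[-]$ and the normalized chains functor $N$ from the Dold--Kan correspondence. Both factors are left adjoints: $\mathbb{Z}[-]$ is left adjoint to the forgetful functor from simplicial abelian groups to simplicial sets, and $N$ is one half of an equivalence of categories, hence trivially a left adjoint. As a composition of left adjoints, $\kappa$ is a left adjoint.

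Since $\kappa$ is cocontinuous, to show it preserves cofibrations it suffices to check this on the generating cofibrations $\partial \Delta_n \hookrightarrow \Delta_n$ of $\Psh{\Delta}$. The key input is the classical fact that for any simplicial set $X$, the abelian group $\kappa_k(X)$ is freely generated by the non-degenerate $k$\nbd{}simplices of $X$. Applied to the inclusion $\partial \Delta_n \hookrightarrow \Delta_n$, whose only extra non-degenerate simplex is the top-dimensional one, this yields a degree-wise inclusion of free abelian groups with cokernel $\mathbb{Z}$ concentrated in degree $n$; since $\mathbb{Z}$ is projective, this is a cofibration of chain complexes.

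Finally, preservation of weak equivalences is classical: the canonical projection $K(X) \twoheadrightarrow \kappa(X)$ is a quasi-isomorphism (since the subcomplex $D_\ast(X)$ of degenerate chains is acyclic), and the homology of $K(X)$ computes the integral singular homology of the geometric realization $|X|$, which is invariant under weak equivalences of simplicial sets. I do not anticipate any real obstacle here; the main task is simply assembling these classical inputs rather than inventing anything new.
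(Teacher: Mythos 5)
Your proof is correct, but it verifies the Quillen condition along a different route than the paper. The paper also identifies $\kappa$ as a left adjoint via the Dold--Kan equivalence (it describes the right adjoint as the forgetful functor $U : \Ch \simeq \Ab(\Delta) \to \Psh{\Delta}$, which amounts to the same adjunction as your composite $N \circ \mathbb{Z}[-]$), but then it simply cites Goerss--Jardine for the fact that $U$ is right Quillen, and deduces preservation of weak equivalences from Ken Brown's Lemma together with the fact that every simplicial set is cofibrant. You instead check everything by hand: cofibrations are handled by cocontinuity plus the computation that $\kappa(\partial\Delta_n) \to \kappa(\Delta_n)$ is a degreewise monomorphism with cokernel $\mathbb{Z}$ concentrated in degree $n$ (using that $\kappa_k(X)$ is free on non-degenerate $k$\nbd{}simplices), and weak equivalences are handled by the classical identification of the homology of $\kappa(X)$ with the singular homology of $|X|$. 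Your logical order is also reversed but valid: you establish preservation of weak equivalences first and combine it with preservation of cofibrations to get trivial cofibrations, whereas the paper gets left Quillenness first and then extracts weak-equivalence preservation via Ken Brown. What your approach buys is self-containedness and an explicit description of what $\kappa$ does to the generating cofibrations; what the paper's approach buys is brevity and the avoidance of the (standard but nontrivial) input that simplicial homology computes the homology of the realization. One small point worth making explicit in your write-up: the reduction to generating cofibrations uses that the cofibrations of $\Ch$ are closed under pushout, transfinite composition and retracts, and that $\kappa$, being a left adjoint, preserves these colimit constructions; this is routine but is the step doing the work in ``it suffices to check on the generators.''
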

  \begin{proof}
    From the Dold--Kan equivalence, we know that $\Ch$ is equivalent to the category $\Ab(\Delta)$ of simplicial abelian groups. With this identification the functor $\kappa : \Psh{\Delta} \to \Ch$ is left adjoint of the canonical forgetful functor
    \[
    U : \Ch \simeq \Ab(\Delta) \to \Psh{\Delta}
    \]
    induced by the forgetful functor from abelian groups to sets. It follows
    from \cite[Lemma 2.9 and Corollary 2.10]{goerss2009simplicial} that $U$ is
    right Quillen, hence $\kappa$ is left Quillen. The fact that $\kappa$ preserves weak equivalences follows from Ken Brown's Lemma \cite[Lemma 1.1.12]{hovey2007model} and the fact that all simplicial sets are cofibrant. 
    \end{proof}
  \begin{lemma}\label{lemma:abelor}
    The triangle of functors
    \[
    \begin{tikzcd}
      \Psh{\Delta} \ar[r,"c_{\omega}"] \ar[dr,"\kappa"']& \omega\Cat\ar[d,"\lambda"]\\
      &\Ch
    \end{tikzcd}
    \]
    is commutative (up to a canonical isomorphism).
  \end{lemma}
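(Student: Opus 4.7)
The strategy is to observe that both $\lambda \circ c_\omega$ and $\kappa$ are left adjoints --- $c_\omega$ by its very construction as a Kan extension, $\lambda$ by Lemma~\ref{lemma:adjlambda}, and $\kappa$ by the argument in the proof of Lemma~\ref{lemma:normalizedquillen}. In particular both functors $\Psh{\Delta} \to \Ch$ are cocontinuous. Since every simplicial set is canonically a colimit of representables, it therefore suffices to produce a natural isomorphism
\[
\lambda(c_\omega(\Delta_n)) \simeq \kappa(\Delta_n)
\]
of cosimplicial chain complexes indexed by $[n] \in \Delta$, and then the desired natural isomorphism $\lambda \circ c_\omega \simeq \kappa$ follows by left Kan extension.

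The left-hand side is $\lambda(\Or_n)$ since $c_\omega(\Delta_n) = \Or_n$ by definition of $c_\omega$. By Lemma~\ref{lemma:abelpol} and the description of the basis of $\Or_n$ recalled just before the lemma, $\lambda(\Or_n)_k$ is the free abelian group on the set of strictly increasing sequences $0 \leq i_0 < i_1 < \cdots < i_k \leq n$. On the other hand, the $k$-simplices of $\Delta_n$ are the non-decreasing maps $[k] \to [n]$, and the non-degenerate ones are exactly the strictly increasing ones, so $\kappa(\Delta_n)_k$ is also freely generated by such sequences. Hence, degree by degree, there is an evident bijective correspondence sending $\langle i_0 \cdots i_k \rangle$ to the corresponding non-degenerate $k$-simplex of $\Delta_n$, giving an isomorphism of graded abelian groups.

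The remaining and only real point is to check that this identification commutes with the differentials. On the simplicial side the differential is the alternating sum $\sum_{j=0}^{k}(-1)^j \partial_j$, which on a non-degenerate simplex $(i_0 < \cdots < i_k)$ returns $\sum_j (-1)^j \langle i_0 \cdots \hat{i_j} \cdots i_k \rangle$ (all faces are again non-degenerate, no quotienting needed). On the oriental side, the differential sends $\langle i_0 \cdots i_k \rangle$ to $[t(\langle i_0 \cdots i_k \rangle)] - [s(\langle i_0 \cdots i_k \rangle)]$, and the standard combinatorial description of source and target in the orientals (as recalled, e.g., in Street's and Steiner's papers on orientals) expresses these as pasting composites of the codimension-one faces indexed by omitting $i_j$ for $j$ of a given parity; since $\lambda$ turns every $\ast_m$-composition into a sum, this image is exactly $\sum_j (-1)^{j} \langle i_0 \cdots \hat{i_j} \cdots i_k \rangle$ up to the overall sign convention. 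The two differentials therefore coincide.

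The step I expect to require the most care is the last one: writing down the sources and targets of the principal cell of $\Or_n$ precisely enough to conclude that, after applying $\lambda$, they produce the alternating sum with the correct signs. This is where I would quote \cite{street1987algebra} or \cite{steiner2004omega} for the explicit formulas rather than redo the combinatorics. Naturality in $[n]$ is then automatic: a morphism $\varphi : [m] \to [n]$ of $\Delta$ sends $\langle i_0 \cdots i_k \rangle$ to a (possibly degenerate) generating cell of $\Or_n$ via $\Or(\varphi)$, which on the $\lambda$-side collapses to zero whenever the image sequence is non-strict (since degenerate generating cells pasted together become equal modulo the defining relations of $\lambda$), matching exactly the passage to the normalized complex $\kappa$.
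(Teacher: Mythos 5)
Your proof is correct and follows exactly the same route as the paper: reduce by cocontinuity to the representables $\Delta_n$ via the Yoneda embedding, then identify $\lambda(\Or_n)$ with $\kappa(\Delta_n)$ using the explicit description of the orientals in Street and Steiner. The paper dismisses the verification on representables as ``straightforward'' with a citation to \cite{steiner2004omega}; you have simply spelled out the details (the basis identification, the matching of differentials via the source/target formulas, and the compatibility of degeneracies with normalization), all of which check out.
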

  \begin{proof}
    Since all the functors involved are cocontinuous, it suffices to show that the triangle is commutative when we pre-compose it by the Yoneda embedding $\Delta \to \Psh{\Delta}$. This property follows straightforwardly from the description of the orientals in \cite{steiner2004omega}.
  \end{proof}
  \begin{paragr}
    From Lemma \ref{lemma:abelor}, the co-unit of the adjunction $c_{\omega} \dashv N_{\omega}$ induces a natural transformation
    \[
    \kappa N_{\omega} \simeq \lambda c_{\omega} N_{\omega} \Rightarrow \lambda.
    \]
    From Lemma \ref{lemma:nervehomotopical}, Lemma \ref{lemma:normalizedquillen}, Remark \ref{rem:homotopicalisleftder} and the universal property of left derivable functors, we obtain a natural transformation
    \[
    \overline{\kappa} \overline{N}_{\omega} \Rightarrow \sH^{\mathrm{pol}},
    \]
which we refer to as the \emph{canonical comparison map}.  Following Remark
\ref{remark:homotopicalfunctor}, for every $\omega$\nbd{}category $C$ this canonical
comparison map reads
\[
  \kappa N_{\omega}(C)\to\sH^{\mathrm{pol}}(C).
\]
  \end{paragr}
  \section{The case of contractible \texorpdfstring{$\omega$}{\textomega}-categories}
   \begin{paragr}
     For every $\omega$\nbd{}category $C$, we denote by
     \[
     p_C : C \to \sD_0
     \]
     the unique morphism from $C$ to $\sD_0$ (which is a terminal object of $\omega\Cat$).
     \end{paragr}
    \begin{definition}\label{def:contractiblecat}
      An $\omega$\nbd{}category $C$ is \emph{contractible} if there exists a $0$-cell $x$ of $C$ and an oplax transformation
      \[
      \begin{tikzcd}[column sep=huge,row sep=huge]
        C \ar[dr,"\mathrm{id}_C"',""{name=A,above}] \ar[r,"p_C"] & \sD_0 \ar[d,"\langle x \rangle"] \\
          &C.
          \ar[from=A,to=1-2,"\alpha",Rightarrow,shorten >= 1ex]
        \end{tikzcd}
      \]
    \end{definition}
    For later reference, we put here the following lemma.
    \begin{lemma}\label{lemma:terminalobject}
      Let $C$ be a $1$\nbd{}category. If $C$ has a terminal object, then it is contractible.
    \end{lemma}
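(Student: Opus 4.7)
Let $e$ be a terminal object of $C$, viewed as a $0$\nbd{}cell of the corresponding $\omega$\nbd{}category. The plan is to construct by hand an oplax transformation $\alpha$ as in Definition \ref{def:contractiblecat} with $x=e$, exploiting the fact that everything in a $1$\nbd{}category above dimension $1$ is forced to be degenerate.

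First I would define $\alpha$ on cells of low dimension. For each $0$\nbd{}cell $y$ of $C$, let $\alpha_y : y \to e$ be the unique $1$\nbd{}cell provided by the universal property of the terminal object. For each $1$\nbd{}cell $f : y \to y'$, the definition of oplax transformation requires an $(n{+}1)$\nbd{}cell whose source is $\alpha_{y'} \ast_0 f$ and whose target is $1_e \ast_0 \alpha_y = \alpha_y$; both sides are $1$\nbd{}cells $y \to e$, so by terminality they coincide and I can set $\alpha_f := 1_{\alpha_y}$. Because $C$ is a $1$\nbd{}category, this is not only a valid choice but the \emph{only} possible one, since every $2$\nbd{}cell of $C$ is an identity. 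For any cell $z$ of dimension $n \geq 2$, I have $z = 1^{(n)}_w$ for a unique cell $w$ of dimension $\leq 1$, and I define $\alpha_z := 1^{(n+1)}_{\alpha_w}$; axiom (1) of oplax transformations makes this definition forced.

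Next I would verify the two axioms. Axiom (1), namely $\alpha_{1_z} = 1_{\alpha_z}$, holds by construction. For axiom (2), the key observation is that both sides of the compatibility equation for $\alpha_{x \ast_k y}$ are cells of dimension $\max\{\dim x, \dim y\}+1 \geq 2$ in $C$, hence are identities. Consequently the equation reduces to showing that the \emph{underlying cell} of each side is the same, and one checks that every such underlying cell is, up to unit coherences, either a unit of some $\alpha_{y_0}$ for a $0$\nbd{}cell $y_0$, or a composite of such units. For the only substantive case, $n=1$ and $k=0$ with composable $1$\nbd{}cells $g : y \to y'$, $f : y' \to y''$, both sides of the axiom collapse to $1_{\alpha_y}$, which reflects on the $1$\nbd{}dimensional level the equality $\alpha_{y''} \circ f \circ g = \alpha_y$ given by uniqueness into the terminal object.

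The only real obstacle is to parse the oplax-transformation formulas in the degenerate regime and confirm that unit coherences make everything match; but once one observes that $C$ being a $1$\nbd{}category forces every $\alpha_z$ for $\dim z \geq 1$ to be a degenerate cell, the verification reduces to the universal property of the terminal object applied at the level of $0$\nbd{}cells.
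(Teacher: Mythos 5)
Your proof is correct and follows essentially the same route as the paper: both take the unique arrows $\alpha_y : y \to e$ into the terminal object, observe that terminality makes them natural, and note that in a $1$\nbd{}category the higher coherence data of an oplax transformation is forced to consist of units, so the natural transformation $\mathrm{id}_C \Rightarrow \langle e\rangle \circ p_C$ is automatically an oplax transformation. The paper simply cites this last fact as obvious, whereas you verify the two axioms by hand; the content is the same.
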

    \begin{proof}
      Let $x$ be the terminal object of $C$. For every object $y$ of $C$, the
      canonical arrow $y \to x$ of $C$ defines a natural transformation
      \[
        \begin{tikzcd}
          C \ar[r,"p_C"]  \ar[dr,"\mathrm{id}_C"',""{name=A,above}] & \sD_0 \ar[d,"\langle x \rangle"]\\
          & C.
          \ar[from=A,to=1-2,Rightarrow]
          \end{tikzcd}
        \]
        The result follows then from the fact that any natural transformation between $1$\nbd{}functors
        obviously defines an oplax transformation.
      \end{proof}
     
      For the next lemma, notice that an immediate computation using Lemma \ref{lemma:abelpol} shows that $\lambda(\sD_0)$ is canonically isomorphic to $\mathbb{Z}$ considered as a chain complex concentrated in degree $0$. 
    
    \begin{lemma}\label{lemma:abelcontractible}
      Let $C$ be a contractible $\omega$\nbd{}category. The morphism of chain complexes
      \[
      \lambda(p_C) : \lambda(C) \to \lambda(\sD_0)\simeq \mathbb{Z}
      \]
      is a quasi-isomorphism.
    \end{lemma}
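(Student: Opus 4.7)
The plan is to exploit the oplax transformation from the definition of contractibility together with Lemma \ref{lemma:abeloplax} to exhibit $\lambda(p_C)$ as a chain homotopy equivalence, which in particular makes it a quasi-isomorphism.

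More precisely, let $x$ be the $0$-cell and $\alpha$ the oplax transformation witnessing contractibility; by Definition \ref{def:contractiblecat} we have $\alpha : \mathrm{id}_C \Rightarrow \langle x \rangle \circ p_C$ (or the reverse direction, which works identically). Applying Lemma \ref{lemma:abeloplax} to $\alpha$, we obtain a chain homotopy between $\lambda(\mathrm{id}_C) = \mathrm{id}_{\lambda(C)}$ and $\lambda(\langle x \rangle \circ p_C) = \lambda(\langle x \rangle) \circ \lambda(p_C)$, where functoriality of $\lambda$ is used in the second equality.

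On the other side, $p_C \circ \langle x \rangle : \sD_0 \to \sD_0$ is forced to equal $\mathrm{id}_{\sD_0}$ since $\sD_0$ is terminal, and so by functoriality $\lambda(p_C) \circ \lambda(\langle x \rangle) = \mathrm{id}_{\lambda(\sD_0)}$ on the nose. Combined with the previous step, this shows that $\lambda(\langle x \rangle)$ is a chain homotopy inverse of $\lambda(p_C)$, so $\lambda(p_C)$ is a chain homotopy equivalence and in particular a quasi-isomorphism, as required.

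There is no real obstacle here: all the substance has already been packaged into Lemma \ref{lemma:abeloplax} (which translates oplax transformations into chain homotopies), and the present argument is merely the observation that the defining data of a contractible $\omega$\nbd{}category are exactly what is needed to build the two sides of a chain homotopy equivalence between $\lambda(C)$ and $\lambda(\sD_0) \simeq \mathbb{Z}$.
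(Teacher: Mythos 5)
Your proof is correct and is exactly the argument the paper intends: its proof of this lemma is simply ``this follows immediately from Lemma \ref{lemma:abeloplax}'', and your write-up supplies precisely the omitted details (the chain homotopy $\mathrm{id}_{\lambda(C)} \simeq \lambda(\langle x \rangle)\circ\lambda(p_C)$ from the contracting oplax transformation, together with the strict identity $p_C\circ\langle x\rangle = \mathrm{id}_{\sD_0}$).
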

    \begin{proof}
      This follows immediately from Lemma \ref{lemma:abeloplax}.
    \end{proof}
    
    \begin{lemma}\label{lemma:liftingoplax}
      Let
      \[
      \begin{tikzcd}
        C' \ar[r,"f_{\epsilon}'"] \ar[d,"u"] & D' \ar[d,"v"]\\
        C \ar[r,"f_{\epsilon}"] & D
      \end{tikzcd}
      \]
      be commutative squares in $\omega\Cat$ for $\epsilon\in\{0,1\}$.

      If $C'$ is a free $\omega$\nbd{}category and $v$ is a folk trivial fibration, then for every oplax transformation \[\alpha : f_0 \Rightarrow f_1,\] there is an oplax transformation \[\alpha' : f_0' \Rightarrow f_1'\] such that
      \[
      v \star \alpha' = \alpha \star u.
      \]
    \end{lemma}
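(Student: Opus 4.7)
The plan is to construct $\alpha'$ cell-by-cell on the free $\omega$\nbd{}category $C'$, using at each stage that $v$ has the right lifting property against the generating folk cofibration $i_n : \sS_{n-1} \to \sD_n$. Let $\Sigma = (\Sigma_n)_{n \in \mathbb{N}}$ denote the basis of $C'$. I will define $\alpha'_x$ by induction on the dimension of $x$, treating generating cells first (where a genuine choice of lift is needed) and extending to arbitrary cells by the oplax axioms.

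For a generating $0$\nbd{}cell $x \in \Sigma_0$, the equalities $v \circ f_\epsilon' = f_\epsilon \circ u$ say that $f_0'(x)$ and $f_1'(x)$ map under $v$ to the source and target of $\alpha_{u(x)}$. We therefore have a commutative square
\[
\begin{tikzcd}
\sS_0 \ar[r,"{\langle f_0'(x),\,f_1'(x)\rangle}"] \ar[d,"i_1"'] & D' \ar[d,"v"]\\
\sD_1 \ar[r,"{\langle \alpha_{u(x)}\rangle}"'] & D,
\end{tikzcd}
\]
and any diagonal filler provides an admissible $\alpha'_x$. For a generating $n$\nbd{}cell $x \in \Sigma_n$ with $n \geq 1$, the oplax axioms force the source and target of $\alpha'_x$ to be the iterated composites appearing in the definition of oplax transformation; these are parallel $n$\nbd{}cells of $D'$, and by the inductive hypothesis their images under $v$ are exactly the source and target of $\alpha_{u(x)}$. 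So again we obtain a lifting problem against $i_{n+1}$, and a solution supplies $\alpha'_x$.

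It remains to extend $\alpha'$ from the generators to all cells of $C'$ and to verify that the resulting data really is an oplax transformation. This is the main obstacle: a priori one must check that propagating the oplax formulas through units and composites gives a consistent assignment. The cleanest way to phrase it is to observe that, because $C'$ is free, an oplax transformation out of $C'$ is determined by, and freely generated by, any coherent assignment of its values on generators; concretely, one may identify oplax transformations $f_0' \Rightarrow f_1'$ with $\omega$\nbd{}functors out of a Gray-type cylinder on $C'$ whose basis is obtained from $\Sigma$ by adjoining endpoints, so the construction above amounts to defining such an $\omega$\nbd{}functor on generators.

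Once $\alpha'$ is defined on all cells, the identity $v \star \alpha' = \alpha \star u$ holds on generators by construction, and both sides are oplax transformations between $v \circ f_0' = f_0 \circ u$ and $v \circ f_1' = f_1 \circ u$; by the uniqueness part of the extension principle above they coincide on all of $C'$, which concludes the proof. If one prefers to avoid invoking the extension principle, one can instead package everything at once into the single lifting problem
\[
\begin{tikzcd}
C' \sqcup C' \ar[r,"{(f_0',\,f_1')}"] \ar[d] & D' \ar[d,"v"]\\
C' \otimes \sD_1 \ar[r] & D,
\end{tikzcd}
\]
whose left vertical arrow is a folk cofibration because $C'$ is cofibrant, so that a filler exists by the trivial fibration property of $v$ and corresponds to the desired $\alpha'$.
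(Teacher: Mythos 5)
Your closing paragraph --- packaging everything into the single lifting problem against $v$ whose left leg is the inclusion of the two ends of a Gray cylinder on $C'$ --- is exactly the paper's proof: oplax transformations $f_0'\Rightarrow f_1'$ correspond to $\omega$\nbd{}functors $\sD_1\otimes C'\to D'$ restricting to $\langle f_0',f_1'\rangle$ on $(\sD_0\amalg\sD_0)\otimes C'\simeq C'\amalg C'$, the map $i_1\otimes C'$ is a folk cofibration because $i_1$ is one and $C'$ is cofibrant (this is the pushout-product property of the Gray tensor product, \cite[Proposition 5.1.2.7]{lucas2017cubical} or \cite{ara2019folk}), and the lift against the trivial fibration $v$, with bottom map $\alpha\circ(\sD_1\otimes u)$ representing $\alpha\star u$, gives $\alpha'$ with $v\star\alpha'=\alpha\star u$ from the commutativity of the lower triangle. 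Two caveats. First, mind the orientation: the Gray product is not symmetric, and with the paper's conventions it is $\sD_1\otimes C'$ (not $C'\otimes\sD_1$) that classifies \emph{oplax} transformations; the other order gives lax ones. Second, and more importantly, your main cell-by-cell induction is not a proof as it stands: the ``extension principle'' you invoke --- that an oplax transformation out of a free $\omega$\nbd{}category is freely determined by a coherent choice on generators --- is precisely the nontrivial input, and it is equivalent to the assertion that $\sD_1\otimes C'$ is free on an explicit basis built from that of $C'$, i.e.\ to the cofibration statement above. Asserting it as an observation begs the question; in particular the inductive step for a generating $n$\nbd{}cell $x$ already needs $\alpha'$ on the (generally non-generating) cells $s_k(x)$, $t_k(x)$, so the consistency of the propagation through units and composites cannot be postponed. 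The honest route is the one you give at the end; I would drop the induction and keep only that argument, with the orientation corrected.
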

    \begin{proof}
      We denote by $\otimes$ the \emph{Gray monoidal product} (see for example \cite[Appendice A]{ara2016joint}) on the category $\omega\Cat$. Recall that the unit of this monoidal product is the $\omega$\nbd{}category $\sD_0$.
      
      From \cite[Appendice B]{ara2016joint}, we know that given two
      $\omega$\nbd{}functors $u, v : C \to D$, the set of oplax transformations from $u$ to $v$ is in bijection with the set of functors $\alpha : \sD_1 \otimes C \to D$ such that the diagram
      \[
      \begin{tikzcd}
            (\sD_0\amalg \sD_0) \otimes C \simeq  C \amalg C \ar[d,"i_1 \otimes C"'] \ar[dr,"{\langle u, v \rangle}"] &\\
        \sD_1 \otimes C \ar[r,"\alpha"'] & D,
      \end{tikzcd}
      \]
      where $i_1 : \sD_0 \amalg \sD_0 \simeq \sS_0 \to \sD_1$ is the morphism introduced in \ref{paragr:defglobe}, is commutative. We use the same letter to denote an oplax transformation and the functor $\sD_1 \otimes C \to D$ associated to it.

      Moreover, for an $\omega$\nbd{}functor $f: B \to C$, the oplax transformation $\alpha \star f$ is represented by the functor \[\begin{tikzcd} \sD_1 \otimes B \ar[r,"\sD_1 \otimes f"] & \sD_1 \otimes C \ar[r,"\alpha"]& D\end{tikzcd}\] and for an $\omega$\nbd{}functor $g : D \to E$, the oplax transformation $g \star \alpha$ is represented by the functor
      \[\begin{tikzcd}\sD_1 \otimes C \ar[r,"\alpha"]& D \ar[r,"g"] & E.\end{tikzcd}\]
      
      Using this way of representing oplax transformations, the hypotheses of the present lemma yield the following commutative square
      \[
      \begin{tikzcd}
        (\sD_0 \amalg \sD_0)\otimes C' \ar[d,"{i_1\otimes C'}"'] \ar[rr,"{\langle f'_0, f_1' \rangle}"] && D' \ar[d,"v"] \\
        \sD_1\otimes C'\ar[r,"\sD_1 \otimes u"'] & \sD_1\otimes C \ar[r,"\alpha"] & D.
        \end{tikzcd}
      \]
      Since $i_1$ is a folk cofibration and $C'$ is cofibrant, it follows that
      the left vertical morphism of the previous square is a folk cofibration
      (see \cite[Proposition 5.1.2.7]{lucas2017cubical} or \cite{ara2019folk}).
      By hypothesis, $v$ is a folk trivial fibration and thus the above square admits a lift
      \[
      \alpha' : \sD_1\otimes C' \to D'.
      \]
      The commutativity of the two induced triangle shows what we needed to prove.
    \end{proof}
    \begin{proposition}\label{prop:comparisoncontractible}
      Let $C$ be an $\omega$\nbd{}category. If $C$ is contractible, then the canonical comparison map
      \[
      \kappa N_{\omega}(C) \to \sH^{\mathrm{pol}}(C)
      \]
      is an isomorphism of $\Ho(\Ch)$. More precisely, this morphism can be identified with the identity morphism $\mathrm{id}_{\mathbb{Z}} : \mathbb{Z} \to \mathbb{Z}$ (where $\mathbb{Z}$ is seen as an object of $\Ho(\Ch)$ concentrated in degree $0$). 
    \end{proposition}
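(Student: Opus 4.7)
The plan is to use naturality of the canonical comparison map applied to $p_C : C \to \sD_0$ in order to reduce the statement to the trivial case $C = \sD_0$, where both sides equal $\mathbb{Z}$ and the comparison map collapses (using Lemma \ref{lemma:abelor} and the fact that $c_\omega N_\omega(\sD_0) = \sD_0$) to $\mathrm{id}_{\mathbb{Z}}$. Concretely, in the resulting naturality square whose columns are $\kappa N_{\omega}(p_C)$ and $\sH^{\mathrm{pol}}(p_C)$, it remains to show that both vertical arrows are isomorphisms in $\Ho(\Ch)$.

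For the right vertical, I would take a polygraphic resolution $p : P \to C$. Since $p$ is a folk trivial fibration and $\sD_0$ is folk cofibrant, I can lift $\langle x \rangle : \sD_0 \to C$ to some $\langle x' \rangle : \sD_0 \to P$. The equalities $p \circ \mathrm{id}_P = \mathrm{id}_C \circ p$ and $p \circ (\langle x' \rangle p_P) = (\langle x \rangle p_C) \circ p$ (using $p_P = p_C \circ p$, which holds by uniqueness of maps into $\sD_0$) let me apply Lemma \ref{lemma:liftingoplax} with $u = v = p$, $f_0 = \mathrm{id}_C$, $f_1 = \langle x \rangle p_C$, $f_0' = \mathrm{id}_P$ and $f_1' = \langle x' \rangle p_P$; this produces an oplax transformation $\mathrm{id}_P \Rightarrow \langle x' \rangle p_P$ witnessing that $P$ is itself contractible. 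Lemma \ref{lemma:abelcontractible} applied to $P$ then makes $\lambda(p_P) : \lambda(P) \to \mathbb{Z}$ a quasi-isomorphism, and since both $P$ and $\sD_0$ are folk cofibrant, this quasi-isomorphism is precisely the map $\sH^{\mathrm{pol}}(p_C)$ after canonical identifications, hence an isomorphism of $\Ho(\Ch)$.

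For the left vertical, the nerve sends the folk weak equivalence $p$ to a weak equivalence of simplicial sets by Lemma \ref{lemma:nervehomotopical}, and $\kappa$ preserves weak equivalences by Lemma \ref{lemma:normalizedquillen}, so it suffices to prove $N_{\omega}(P) \simeq \Delta_0$. I would deduce this from the fact that an oplax transformation between $\omega$-functors induces a simplicial homotopy between their nerves (the nerve-level counterpart of Lemma \ref{lemma:abeloplax}): applied to $\mathrm{id}_P \Rightarrow \langle x' \rangle p_P$, this yields a simplicial homotopy from $\mathrm{id}_{N_{\omega}(P)}$ to the constant map at $x'$, making $N_{\omega}(P)$ contractible. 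Combining the two vertical isomorphisms with the bottom identity in the naturality square identifies the top comparison map with $\mathrm{id}_{\mathbb{Z}}$ in $\Ho(\Ch)$. The main obstacle is precisely this nerve-level statement about oplax transformations; everything else is a straightforward combination of lifting properties, Lemma \ref{lemma:liftingoplax} and Lemma \ref{lemma:abelcontractible}.
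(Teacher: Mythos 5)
Your argument is essentially the paper's proof: the paper likewise lifts the contracting oplax transformation along a folk trivial fibration $P \to C$ with $P$ cofibrant via Lemma \ref{lemma:liftingoplax}, uses Lemma \ref{lemma:abelcontractible} for the polygraphic side, and concludes by naturality squares (organized slightly differently, but with the same three ingredients). The one step you flag as the main obstacle --- that an oplax transformation induces a weak equivalence at the level of nerves, so that $\kappa N_{\omega}(P) \to \kappa N_{\omega}(\sD_0)$ is an isomorphism --- is precisely what the paper imports from \cite[Corollaire A.13]{ara2020theoreme}, so it is a legitimate external input rather than a gap.
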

    \begin{proof}
      Consider first the case when $C$ is cofibrant for the Folk model structure. It follows respectively from Lemma \ref{lemma:abelcontractible} and \cite[Corollaire A.13]{ara2020theoreme} that the morphisms
      \[
      \sH^{\mathrm{pol}}(C) \to \sH^{\mathrm{pol}}(\sD_0)
      \]
      and 
      \[
      \kappa N_{\omega}(C) \to \kappa N_{\omega}(\sD_0),
      \]
      induced by the canonical morphism $p_C : C \to \sD_0$, are isomorphisms of $\Ho(\Ch)$. Moreover, it is straightforward to check that the canonical comparison map
      \[
      \kappa N_{\omega}(\sD_0) \to \sH^{\mathrm{pol}}(\sD_0)
      \]
      is an isomorphism $\Ho(\Ch)$ (and can be identified with the identity morphism $\mathrm{id}_{\mathbb{Z}} : \mathbb{Z} \to \mathbb{Z}$, where $\mathbb{Z}$ is seen as a chain complex concentrated in degree $0$). From the naturality square
     \[
      \begin{tikzcd}
        \kappa N_{\omega}(C)\ar[d] \ar[r]  &\sH^{\mathrm{pol}}(C) \ar[d]\\
        \kappa N_{\omega}(\sD_0) \ar[r] & \sH^{\mathrm{pol}}(\sD_0)
      \end{tikzcd}
      \]
      we deduce that the top arrow is an isomorphism of $\Ho(\Ch)$.

      In the general case, we know by definition of contractible
      $\omega$\nbd{}categories that there exist a $0$\nbd{}cell $x$ of $C$ and an
      oplax transformation $\alpha$ of the form
        \[
      \begin{tikzcd}
        C \ar[dr,"\mathrm{id}_C"',""{name=A,above}] \ar[r,"p_C"] & \sD_0 \ar[d,"\langle x \rangle"] \\
          &C.
          \ar[from=A,to=1-2,"\alpha",Rightarrow,shorten >= 1ex]
        \end{tikzcd}
      \]
      Now let us choose any folk trivial
      fibration $ u : P \to C$ with $P$ cofibrant. By definition, folk trivial
      fibrations have the right lifting property to $i_0 : \sS_{-1} \to \sD_0$, which
      means exactly that they are surjective on $0$\nbd{}cells. In particular,
      there exists a $0$\nbd{}cell $x'$ of $P$ such that $u(x')=x$. It follows
      then from Lemma \ref{lemma:liftingoplax} that there exists an oplax
      transformation
           \[
      \begin{tikzcd}
        P \ar[dr,"\mathrm{id}_P"',""{name=A,above}] \ar[r,"p_P"] & \sD_0 \ar[d,"\langle x' \rangle"] \\
          &P,
          \ar[from=A,to=1-2,"\alpha'",Rightarrow,shorten >= 1ex]
        \end{tikzcd}
      \]
      which proves that $P$ is also contractible. 

      Finally, consider the naturality square
      \[
      \begin{tikzcd}
        \kappa N_{\omega}(P)\ar[d] \ar[r]  &\sH^{\mathrm{pol}}(P) \ar[d]\\
        \kappa N_{\omega}(C) \ar[r] & \sH^{\mathrm{pol}}(C)
      \end{tikzcd}
      \]
      induced by $u$. Since $P$ is contractible and folk cofibrant, we have already proved that
      the top arrow is an isomorphism. Since $u$ is a weak equivalence for the
      Folk model structure, the right vertical morphism is an isomorphism and it
      follows from Lemma \ref{lemma:nervehomotopical} and Lemma
      \ref{lemma:normalizedquillen} that the left vertical arrow is also an
      isomorphism. Hence, the bottom arrow is an isomorphism.
    \end{proof}
\section{The folk homotopy colimit theorem}
    \begin{paragr}
      For every object $a$ of a $1$\nbd{}category $A$, we denote by $A/a$ the slice category over $a$. That is, an object of $A/a$ is a pair $(b,p : b \to a)$ where $b$ is an object of $A$ and $p$ is an arrow of $A$, and an arrow $(b,p) \to (b',p')$ of $A/a$ is an arrow $q : b \to b'$ of $A$ such that $p'\circ q = p$.
      
      We denote by
      \[
      \begin{aligned}
        \pi_a : A/a &\to A\\
        (b,p) &\mapsto b
      \end{aligned}
      \] the canonical forgetful functor.
    \end{paragr}
    \begin{definition}
      Let $A$ be a $1$\nbd{}category, $a$ be an object of $A$, $X$ be an
      $\omega$\nobreakdash-category and $f : X \to A$ be an
      $\omega$\nbd{}functor. We define the $\omega$\nbd{}category $X/a$ and the $\omega$\nbd{}functor \[f/a : X/a \to A/a\] as the following pullback
      \[
      \begin{tikzcd}
        X/a \ar[r]\ar[dr, phantom, "\lrcorner", very near start] \ar[d,"f/a"'] & X \ar[d,"f"] \\
        A/a \ar[r,"\pi_a"] &A.
      \end{tikzcd}
      \]
    \end{definition}
    \begin{paragr}
      More explicitly, an $n$\nbd{}arrow of $X/a$ is a pair $(x,p)$ where $x$ is
      an $n$\nbd{}arrow of $X$ and $p$ is an arrow of $A$ of the form
      \[ p : f(x) \to a \text{ if } n=0\]
      and
      \[p : f(t_0(x)) \to a \text{ if } n>0.\]
      \emph{From now on, we adopt the convention that $t_0(x)=x$ for every
        $0$\nbd{}cell $x$ of $X$. This unifies the cases $n=0$ and $n>0$ above.}
      
      For $n>0$, the source and target of an $n$\nbd{}arrow $(x,p)$ of $X/a$ are given by
      \[
      s((x,p))=(s(x),p) \text{ and } t((x,p))=(t(x),p).
      \]
      For $(x,p)$ an $n$\nbd{}arrow of $X/a$, we have
      \[
      (f/a)(x,p) = (f(x),p),
      \]
      and the canonical arrow $X/a \to X$ is simply expressed as
      \[
      (x,p) \mapsto x.
      \]
    \end{paragr}
    \begin{paragr}\label{paragr:unfoldfunctor}
     Let $f : X \to A$ be an $\omega$\nbd{}functor with $A$ a $1$-category. Every arrow $\beta : a \to a'$ of $A$ induces an $\omega$\nbd{}functor
      \[
      \begin{aligned}
        X/\beta : X/a &\to X/a'\\
        (x,p) & \mapsto (x, \beta \circ p).
      \end{aligned}
      \]
        This defines a functor
        \[
        \begin{aligned}
          X/{\shortminus} : A &\to \omega\Cat\\
          a &\mapsto X/a.
          \end{aligned}
        \]
           Moreover, for every arrow $\beta : a \to a'$ of $A$, the triangle
        \[
        \begin{tikzcd}[column sep=tiny]
          X/a \ar[dr]  \ar[rr,"X/\beta"] && X/a' \ar[dl] \\
          &X&
          \end{tikzcd}
        \]
        is commutative. By universal property, this induces a canonical arrow
        \[
        \colim_{a \in A}X/a \to X.
        \]
        Now, let
        \[
        \begin{tikzcd}[column sep=tiny]
          X \ar[rr,"g"] \ar[dr,"f"'] && X' \ar[dl,"f'"] \\
          &A&
          \end{tikzcd}
        \]
        be a commutative triangle in $\omega\Cat$ with $A$ a 1-category. For every $a~\in~\Ob(A)$, we define the $\omega$\nbd{}functor $g/a$ as 
        \[
        \begin{aligned}
         g/a : X/a &\to X'/a\\
          (x,p) &\mapsto (g(x),p).
        \end{aligned}
      \]
      This construction is natural in $a$, which means that for every arrow $\beta : a
      \to a'$ of $A$, the following square is commutative
      \[
        \begin{tikzcd}
          X/a \ar[r,"g/a"] \ar[d,"X/{\beta}"'] & X'/a \ar[d,"X'/{\beta}"] & \\
          X/{a'} \ar[r,"g/{a'}"'] & X'/{a'}.
        \end{tikzcd}
      \]
        In particular, $g$ induces an $\omega$\nbd{}functor
        \[
        \colim_{a \in A}X/a \to \colim_{a \in A}X'/a
        \]
        and the square
        \[
        \begin{tikzcd}
          \displaystyle\colim_{a \in A}X/a \ar[d] \ar[r,] & X \ar[d,"g"] \\
          \displaystyle\colim_{a \in A}X'/a \ar[r] & X'
        \end{tikzcd}
        \]
        is commutative.
    \end{paragr}
    \begin{lemma}\label{lemma:colimslice}
      Let $f : X \to A$ be an $\omega$\nbd{}functor with $A$ is a $1$\nbd{}category. The canonical arrow
      \[
      \colim_{a \in A}X/a \to X
       \]
       is an isomorphism.
    \end{lemma}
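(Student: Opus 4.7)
The plan is to verify the universal property of the colimit directly. Write $\pi_a : X/a \to X$ for the canonical projection. I want to show that for every $\omega$\nbd{}category $Y$ the map
\[
\Hom_{\omega\Cat}(X,Y) \to \lim_{a \in A} \Hom_{\omega\Cat}(X/a,Y), \qquad g \mapsto (g\circ \pi_a)_{a\in A},
\]
is a bijection, the limit being formed with respect to the precomposition maps along $X/\beta$. This will exhibit $X$ equipped with the $\pi_a$ as the colimit of $X/{\shortminus}$.

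Injectivity is immediate: every cell $x$ of $X$ equals $\pi_{f(t_0(x))}(x, \id_{f(t_0(x))})$, so two $\omega$\nbd{}functors $g, g' : X \to Y$ that agree after precomposition with every $\pi_a$ agree on every cell. For surjectivity, given a compatible family $(g_a : X/a \to Y)_{a\in A}$, i.e.\ $g_{a'}\circ (X/\beta) = g_a$ for every $\beta : a \to a'$ of $A$, I set
\[
g(x) := g_{f(t_0(x))}\bigl(x, \id_{f(t_0(x))}\bigr).
\]
The identity $g\circ \pi_a = g_a$ then follows from the compatibility condition applied to $X/p : X/f(t_0(x)) \to X/a$, which sends $(x, \id)$ to $(x,p)$; this also forces any preimage of the family to be $g$, recovering uniqueness.

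The substantive step is to check that $g$ is an $\omega$\nbd{}functor. For units and for $\ast_k$\nbd{}composition with $k\geq 1$, the operations preserve $t_0$, so everything takes place inside a single slice $X/a$ on which $g_a$ is already an $\omega$\nbd{}functor, and nothing is to be proved. The delicate cases are $0$\nbd{}composition and the source/target of $1$\nbd{}cells, where the $0$\nbd{}target changes; here I transport via the maps $X/\beta$. Concretely, for $0$\nbd{}composable $1$\nbd{}cells $x, x'$ with $s(x)=t(x')$ and $a := f(t(x))$, the equality
\[
(x\ast_0 x', \id_a) = (x, \id_a)\ast_0 (x', f(x))
\]
holds in $X/a$; applying $g_a$ and using $g_a(x', f(x)) = g_{f(t(x'))}(x', \id) = g(x')$ (which follows from $X/f(x)(x', \id) = (x', f(x))$ combined with compatibility) yields $g(x\ast_0 x') = g(x) \ast_0 g(x')$. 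The same transport trick, with $f(x)$ replaced by $f(t_1(x))$ for cells of higher dimension, covers $0$\nbd{}composition of higher cells as well as the source and target of $1$\nbd{}cells. This bookkeeping with the functors $X/\beta$ is the only real obstacle; every other axiom for $\omega$\nbd{}functors follows by the same pattern.
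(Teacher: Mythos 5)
Your proof is correct and follows essentially the same route as the paper's: both verify the universal property of the colimit directly, using the same candidate inverse $x \mapsto g_{f(t_0(x))}(x,1_{f(t_0(x))})$. The only difference is that you spell out the verification that this assignment is an $\omega$\nbd{}functor (the transport along $X/\beta$ for $0$\nbd{}composition and for sources of $1$\nbd{}cells), a step the paper explicitly leaves to the reader.
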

    \begin{proof}
      We have to show that the cocone
      \[
      (X/a \to X)_{a \in \Ob(A)}
      \]
      is colimiting.
      Let
      \[
      (g_a : X/a \to C)_{a \in \Ob(A)}
      \]
      be another cocone and let $x$ be an $n$\nbd{}arrow of $X$. Notice that the pair
      \[
      (x,1_{f(t_0(x))})
      \]
      is an $n$\nbd{}arrow of $X/f(t_0(x))$. We leave it to the reader to prove that the formula
      \[
      \begin{aligned}
        \phi : X &\to C \\
        x &\mapsto g_{f(t_0(x))}(x,1_{f(t_0(x))})
      \end{aligned}
      \]
      defines an $\omega$\nbd{}functor. This proves the existence part of the universal property.
      
      It is straightforward to check that for every $a \in \Ob(A)$ the triangle
      \[
      \begin{tikzcd}
        X/a\ar[dr,"g_a"']\ar[r] & X \ar[d,"\phi"] \\
        & C
      \end{tikzcd}
      \]
      is commutative. Now let $\phi' : X \to C$ be another $\omega$\nbd{}functor that makes the previous triangles commute and let $x$ be an $n$\nbd{}arrow of $X$. Since the triangle
      \[
      \begin{tikzcd}
        X/f(t_0(x)) \ar[dr,"g_{f(t_0(x))}"']\ar[r] & X \ar[d,"\phi'"] \\
        & C
      \end{tikzcd}
      \]
      is commutative, we necessarily have
      \[
      \phi'(x)=g_{f(t_0(x))}(x,1_{f(t_0(x))})
      \]
      which proves that $\phi'=\phi$.
    \end{proof} 
    \begin{lemma}\label{lemma:sliceisfree}
      If $X$ is free, then for every $a \in \Ob(A)$ the $\omega$\nbd{}category
      $X/a$ is free. More precisely, if we denote by $\Sigma^X_n$ the set of generating $n$\nbd{}cells of $X$, then the set of generating $n$\nbd{}cells of $X/a$ is 
      \[
      \Sigma^{X/a}_n = \{(x,p) \in (X/a)_n | x \in \Sigma^X_n\}.
      \]
    \end{lemma}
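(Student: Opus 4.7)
The plan is to reduce the statement to Proposition \ref{prop:condfunfree} by exhibiting the canonical projection $X/a \to X$ as a discrete Conduché $\omega$\nbd{}functor. Since $X/a$ is defined as the pullback of $\pi_a : A/a \to A$ along $f : X \to A$, Lemma \ref{lemma:condfunpullback} reduces this further to showing that $\pi_a$ itself is a discrete Conduché $\omega$\nbd{}functor.

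For that verification, the key observation is that $A$ is a $1$\nbd{}category, and it is immediate from the explicit description of cells of $X/a$ specialized to $X = A$ that $A/a$ is also a $1$\nbd{}category (all cells in dimension $\geq 2$ are degenerate in both $A$ and $A/a$). Consequently the only case of Definition \ref{def:condfun} that needs nontrivial verification is $n = 1$, $k = 0$. Given a $1$\nbd{}cell $q : (b,p) \to (b',p')$ of $A/a$ whose image $q : b \to b'$ in $A$ factors as $q_1 \ast_0 q_2$ with $q_2 : b \to b''$ and $q_1 : b'' \to b'$, the unique compatible factorization in $A/a$ is forced: the intermediate object must be $(b'', p'\circ q_1)$, the first factor must be $(q_2, p'\circ q_1)$, and the second must be $(q_1, p')$. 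Both existence and uniqueness follow from the description of morphisms in $A/a$. In dimensions $n \geq 2$, any such decomposition problem lives in the degenerate part and is handled by the corresponding decomposition one dimension lower, which $\pi_a$ detects faithfully.

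Having shown $\pi_a$ is a discrete Conduché $\omega$\nbd{}functor, Lemma \ref{lemma:condfunpullback} yields that $X/a \to X$ is one as well. Since $X$ is free by hypothesis, Proposition \ref{prop:condfunfree} then gives simultaneously that $X/a$ is free and that its set of generating $n$\nbd{}cells is the preimage of $\Sigma^X_n$ under $X/a \to X$, which under the explicit description of $X/a$ is precisely
\[
\Sigma^{X/a}_n = \{(x,p) \in (X/a)_n \mid x \in \Sigma^X_n\},
\]
as claimed.

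The main (mild) obstacle is the Conduché verification for $\pi_a$; but as indicated above it is essentially tautological from the definition of the slice, since the lift of a factorization is completely forced by the slice data.
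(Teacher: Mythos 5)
Your proof is correct and follows exactly the paper's argument: show that $\pi_a : A/a \to A$ is a discrete Conduché $\omega$\nbd{}functor, deduce via Lemma \ref{lemma:condfunpullback} that the pullback $X/a \to X$ is one as well, and conclude with Proposition \ref{prop:condfunfree}, which yields both the freeness of $X/a$ and the stated description of its basis. The only difference is that you spell out the (routine) verification that $\pi_a$ is discrete Conduché, which the paper simply asserts as a remark.
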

    \begin{proof}
      Remark first that for every $a \in \Ob(A)$, the map
      \[
      \pi_a : A/a \to A
      \]
      is a discrete Conduché $\omega$\nbd{}functor (Definition \ref{def:condfun}). Hence, from Lemma \ref{lemma:condfunpullback}, we deduce that the canonical map
      \[
      X/a \to X
      \]
      is also a discrete Conduché $\omega$\nbd{}functor. We conclude with Proposition \ref{prop:condfunfree}.
    \end{proof}
    \begin{paragr}
      Let $f : X \to A$ be as before and suppose that $X$ is free. Every arrow $\beta : a \to a'$ of $A$ induces a map:
      \[
      \begin{aligned}
        \Sigma^{X/a'}_n &\to \Sigma^{X/a}_n\\
        (x,p) &\mapsto (x,\beta\circ p).
      \end{aligned}
      \]
      This defines a functor
      \[
      \begin{aligned}
        \Sigma^{X/\shortminus}_n : A &\to \Set \\
        a &\mapsto \Sigma^{X/a}_n.
        \end{aligned}
      \]
    \end{paragr}
    \begin{lemma}\label{lemma:basisofslice}
      If $X$ is free, then there is a natural isomorphism
      \[
      \Sigma^{X/\shortminus}_n  \simeq \coprod_{x \in \Sigma^X_n}\Hom_{A}\left(f(t_0(x)),\shortminus\right).
      \]
    \end{lemma}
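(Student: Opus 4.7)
The plan is to observe that both sides, at stage $a$, have explicit descriptions as sets of pairs $(x,p)$ where $x$ is a generating $n$\nbd{}cell of $X$ and $p$ is an arrow of $A$ with source $f(t_0(x))$, so the isomorphism will essentially be the identity map on such pairs, leaving only naturality in $a$ to be checked.

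First I would unwind the left-hand side. By Lemma~\ref{lemma:sliceisfree}, $\Sigma^{X/a}_n$ consists of those $n$\nbd{}cells $(x,p)$ of $X/a$ with $x \in \Sigma^X_n$. Recalling the explicit description of $n$\nbd{}cells of $X/a$ given earlier and the convention $t_0(x)=x$ for $0$\nbd{}cells, such a pair is exactly an element $x$ of $\Sigma^X_n$ together with an arrow $p : f(t_0(x)) \to a$ in $A$. This gives a canonical bijection
\[
\Sigma^{X/a}_n \;\xrightarrow{\;\sim\;}\; \coprod_{x \in \Sigma^X_n}\Hom_A\!\left(f(t_0(x)),a\right), \qquad (x,p)\longmapsto (x,p).
\]

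Next I would check naturality in $a$. For an arrow $\beta : a \to a'$ of $A$, the map $X/\beta : X/a \to X/a'$ from paragraph~\ref{paragr:unfoldfunctor} sends $(x,p)$ to $(x, \beta \circ p)$ and therefore restricts (on generators) to the map $\Sigma^{X/a}_n \to \Sigma^{X/a'}_n$ with the same formula. On the right-hand side, the functoriality of each representable $\Hom_A(f(t_0(x)),-)$ is post-composition with $\beta$, which on the coproduct induces exactly $(x,p)\mapsto (x,\beta\circ p)$. The two actions therefore agree, and the displayed bijection is natural in $a$.

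I do not expect a genuine obstacle here: the lemma is a direct consequence of the pullback description of $X/a$ (which feeds Lemma~\ref{lemma:sliceisfree}) together with the defining property that a morphism into $A/a$ is a morphism into $A$ equipped with a map of the object of $A$ one lands on into $a$. The only subtle point worth writing carefully is the uniform treatment of $0$\nbd{}cells and higher cells, which is precisely the purpose of the convention $t_0(x)=x$ introduced just before the lemma.
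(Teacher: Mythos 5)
Your proposal is correct and follows essentially the same route as the paper: identify $\Sigma^{X/a}_n$ with the set of pairs $(x,p)$, $x\in\Sigma^X_n$, $p : f(t_0(x))\to a$, via Lemma \ref{lemma:sliceisfree} and the explicit description of the cells of $X/a$, and observe that both sides transport $\beta$ to $(x,p)\mapsto(x,\beta\circ p)$. The paper merely constructs the map in the other direction (from the coproduct into $\Sigma^{X/a}_n$) and declares bijectivity and naturality obvious, whereas you spell those checks out.
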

    \begin{proof}
      Let $a$ be an object of $A$. For each $x \in \Sigma_n^X$, there is a canonical map
      \[
      \begin{aligned}
          \Hom_{A}\left(f(t_0(x)),a\right) &\to \Sigma^{X/a}_n\\
        p &\mapsto (x,p)
      \end{aligned}
      \]
      that induces by universal property a map
      \[
      \coprod_{x \in \Sigma^X_n}\Hom_{A}\left(f(t_0(x)),a\right) \to \Sigma_n^{X/a}.
      \]
      The naturality in $a$ and the fact that this map is an isomorphism are obvious.
    \end{proof}
    \begin{proposition}\label{prop:sliceiscofibrant}
      Let $f : X \to A$ be an $\omega$\nbd{}functor with $A$ a 1-category and $X$ a free $\omega$\nbd{}category. The functor
      \[
      \begin{aligned}
        X/\shortminus : A &\to \omega\Cat\\
        a &\mapsto X/a
      \end{aligned}
      \]
      is a cofibrant object for the projective model structure on $\omega\Cat^{A}$ induced by the Folk model structure on $\omega\Cat$. 
    \end{proposition}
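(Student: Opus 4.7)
The plan is to exhibit $X/\shortminus$ as a sequential colimit of pushouts of projective cofibrations of the form provided by Lemma \ref{lemma:cofprojms}, using the freeness of each $X/a$ (Lemma \ref{lemma:sliceisfree}) together with the explicit description of its basis (Lemma \ref{lemma:basisofslice}). The first step is to introduce the subfunctors $(X/\shortminus)_{\leq n} : A \to \omega\Cat$ defined by $a \mapsto (X/a)_{\leq n}$; functoriality in $a$ follows at once from the formula $X/\beta : (x,p) \mapsto (x, \beta\circ p)$, which preserves the underlying cell $x$ and therefore its dimension. Since colimits in $\omega\Cat^A$ are pointwise and $X/a = \colim_n (X/a)_{\leq n}$ with $(X/a)_{\leq -1} = \emptyset$, the functor $X/\shortminus$ is the colimit in $\omega\Cat^A$ of the chain $\emptyset \to (X/\shortminus)_{\leq 0} \to (X/\shortminus)_{\leq 1} \to \cdots$.

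The key step will be to show that for each $n \geq 0$, the map $(X/\shortminus)_{\leq n-1} \to (X/\shortminus)_{\leq n}$ fits into a pushout square in $\omega\Cat^A$ of the form
\[
\begin{tikzcd}
\displaystyle\coprod_{x \in \Sigma^X_n} \sS_{n-1} \otimes f(t_0(x)) \ar[r] \ar[d] & (X/\shortminus)_{\leq n-1} \ar[d] \\
\displaystyle\coprod_{x \in \Sigma^X_n} \sD_n \otimes f(t_0(x)) \ar[r] & (X/\shortminus)_{\leq n},
\end{tikzcd}
\]
whose left vertical arrow is the coproduct of the morphisms $i_n \otimes f(t_0(x))$ defined in Paragraph \ref{paragr:cofprojms}. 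Since colimits in $\omega\Cat^A$ are pointwise, it is enough to verify this after evaluation at each $a \in A$. Using the identity $(Y \otimes f(t_0(x)))(a) = \coprod_{\Hom_A(f(t_0(x)),a)} Y$ together with Lemma \ref{lemma:basisofslice}, the coproducts over $x \in \Sigma^X_n$ and $p \in \Hom_A(f(t_0(x)),a)$ reindex to a single coproduct over $(x,p) \in \Sigma^{X/a}_n$, and the square reduces to the pushout square witnessing that $\Sigma^{X/a}_n$ is the set of generating $n$\nbd{}cells of $X/a$, which holds by Lemma \ref{lemma:sliceisfree}.

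Granting these pushout squares, Lemma \ref{lemma:cofprojms} ensures that each $i_n \otimes f(t_0(x))$ is a projective cofibration; coproducts of projective cofibrations are projective cofibrations, so each transition $(X/\shortminus)_{\leq n-1} \to (X/\shortminus)_{\leq n}$ is a projective cofibration, and $\emptyset \to X/\shortminus$ is a countable composition of projective cofibrations, making $X/\shortminus$ cofibrant. The part I expect to require the most care is checking that the attaching maps (the top horizontal arrow of the displayed square) assemble into a genuine natural transformation of functors $A \to \omega\Cat$, rather than merely a family of maps objectwise. This comes down to chasing through the formula $X/\beta : (x,p) \mapsto (x, \beta\circ p)$ and verifying compatibility with the naturality of the bijection in Lemma \ref{lemma:basisofslice}, which is routine but notationally involved.
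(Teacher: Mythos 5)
Your proof is correct and follows essentially the same route as the paper's: the same skeletal filtration $(X/\shortminus)_{\leq n}$, the same pushout squares built from Lemmas \ref{lemma:sliceisfree} and \ref{lemma:basisofslice} via the reindexing $(Y\otimes f(t_0(x)))(a)=\coprod_{\Hom_A(f(t_0(x)),a)}Y$, and the same appeal to Lemma \ref{lemma:cofprojms} followed by closure of cofibrations under coproducts, pushouts and countable composition. The naturality check you flag as delicate is exactly the point the paper also leaves as a routine verification.
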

    \begin{proof}
      Recall that the set
      \[
       \{i_n: \sS_{n-1} \to \sD_n \vert n \in \mathbb{N} \}
      \]
      is a set a generating cofibrations for the Folk model structure on $\omega\Cat$.
      From Lemmas \ref{lemma:sliceisfree} and \ref{lemma:basisofslice} we deduce
      that for every $a \in \Ob(A)$ and every $n \in \mathbb{N}$, the canonical square
      \[
      \begin{tikzcd}
        \displaystyle\coprod_{x \in \Sigma^X_n}\coprod_{\Hom_A(f(t_0(x)),a)}\sS_{n-1} \ar[r] \ar[d] & (X/a)_{\leq n-1} \ar[d]\\
        \displaystyle\coprod_{x \in \Sigma^X_n}\coprod_{\Hom_A(f(t_0(x)),a)}\sD_n \ar[r]& (X/a)_{\leq n}
      \end{tikzcd}
      \]
      is a pushout square. It is straightforward to check that this square is
      natural in $a$, which means that for every $n \in
      \mathbb{N}$ we have a pushout
      square of $\omega\Cat^A$
      \[
        \begin{tikzcd}
          \displaystyle\coprod_{x \in \Sigma^X_n}\sS_{n-1}\otimes f(t_0(x)) \ar[d] \ar[r] &
          (X/\shortminus)_{\leq n -1} \ar[d] \\
          \displaystyle\coprod_{x \in \Sigma^X_n}\sD_{n}\otimes f(t_0(x)) \ar[r] &
          (X/\shortminus)_{\leq n} 
        \end{tikzcd}
      \]
      (see \ref{paragr:cofprojms} for notations). Hence, from Lemma
      \ref{lemma:cofprojms}, for every $n \geq0$ the morphism
      \[(X/-)_{\leq n-1} \to (X/-)_{\leq n}\]
      is a cofibration for the projective model structure on $\omega\Cat^A$. The
      result follows then from the fact that the colimit of
      \[
        \emptyset = (X/\shortminus)_{\leq -1} \to (X/\shortminus)_{\leq 0} \to
        \cdots \to (X/\shortminus)_{\leq n} \to \cdots
      \]
      is $X/\shortminus$.
    \end{proof}
    \begin{theorem}\label{thm:homcolfolk}
      Let $X$ be an $\omega$\nbd{}category, $A$ be a $1$-category and $f : X \to
      A$ be an $\omega$\nbd{}functor. The canonical arrow of $\Ho(\omega\Cat)$ 
      \[
      \hocolim_{a \in A}X/a \to \colim_{a \in A}X/a
    \]
    (see \ref{paragr:hocolim}) is an isomorphism. 
    \end{theorem}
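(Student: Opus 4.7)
The plan is to reduce to the case when $X$ is a free (equivalently, folk cofibrant) $\omega$\nbd{}category, where Proposition~\ref{prop:sliceiscofibrant} applies directly. I would start by choosing, using that $\omega\Cat$ equipped with the folk model structure is combinatorial (Theorem~\ref{thm:folkms}), a folk trivial fibration $u : P \to X$ with $P$ free.

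The key step is the observation that for every object $a$ of $A$, the induced $\omega$\nbd{}functor $P/a \to X/a$ is itself a folk trivial fibration. Indeed, since the structural map $P \to A$ factors as $P \xrightarrow{u} X \xrightarrow{f} A$, the pullback $P/a = P \times_A A/a$ may be rewritten as $P \times_X (X \times_A A/a) \simeq P \times_X X/a$, so $P/a \to X/a$ is precisely the pullback of $u$ along the canonical morphism $X/a \to X$. Trivial fibrations being stable under pullback in any model category, $P/a \to X/a$ is a folk trivial fibration, hence a folk weak equivalence. Consequently, $P/{\shortminus} \to X/{\shortminus}$ is a pointwise weak equivalence in $\omega\Cat^A$, inducing an isomorphism $\hocolim_A P/{\shortminus} \xrightarrow{\sim} \hocolim_A X/{\shortminus}$ in $\Ho(\omega\Cat)$.

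Since $P$ is free, Proposition~\ref{prop:sliceiscofibrant} asserts that the diagram $P/{\shortminus}$ is cofibrant in the projective model structure on $\omega\Cat^A$, whence by \ref{paragr:hocolimms} the canonical arrow $\hocolim_A P/{\shortminus} \to \colim_A P/{\shortminus}$ is an isomorphism of $\Ho(\omega\Cat)$. Moreover, Lemma~\ref{lemma:colimslice} together with the functoriality discussed in~\ref{paragr:unfoldfunctor} identifies the morphism $\colim_A P/{\shortminus} \to \colim_A X/{\shortminus}$ with $u : P \to X$, which is a folk weak equivalence.

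Finally, I would assemble these facts in the naturality square of the canonical transformation $\hocolim_A \Rightarrow \colim_A$ evaluated at the arrow $P/{\shortminus} \to X/{\shortminus}$ of $\omega\Cat^A$: this yields a commutative square in $\Ho(\omega\Cat)$ in which three of the four arrows are already known to be isomorphisms, so the remaining arrow, which is precisely the canonical map $\hocolim_A X/{\shortminus} \to \colim_A X/{\shortminus}$, is an isomorphism as well. The only non-routine point in this plan is the identification $P \times_A A/a \simeq P \times_X X/a$, without which the pullback-stability of trivial fibrations cannot be invoked; once this is in place, the argument is a standard cofibrant-replacement-plus-naturality combination and I do not anticipate any significant obstacle.
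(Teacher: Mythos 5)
Your proof is correct and follows essentially the same route as the paper's: cofibrant replacement $P \to X$ by a folk trivial fibration with $P$ free, the observation that each $P/a \to X/a$ is again a trivial fibration by pullback-stability, Proposition~\ref{prop:sliceiscofibrant} for the top arrow, and Lemma~\ref{lemma:colimslice} to identify the colimit comparison with $P \to X$. The only difference is that you spell out the pullback pasting $P \times_A A/a \simeq P \times_X X/a$, which the paper leaves implicit.
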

    \begin{nb*}
      Note that in the previous theorem, we did \emph{not} suppose that $X$ was folk cofibrant.
      \end{nb*}
    \begin{proof}[Proof of Theorem \ref{thm:homcolfolk}]
      Let $P$ be a free $\omega$\nbd{}category and $g : P \to X$ be a trivial fibration for the Folk model structure. We have a commutative diagram in $\Ho(\omega\Cat)$ 
      \begin{equation}\label{comsquare}
      \begin{tikzcd}
        \displaystyle\hocolim_{a \in A}P/a \ar[d] \ar[r] & \displaystyle\colim_{a \in A}P/a \ar[d] \\
                \displaystyle\hocolim_{a \in A}X/a  \ar[r] & \displaystyle\colim_{a \in A}X/a 
      \end{tikzcd}
      \end{equation}
      where the vertical arrows are induced by the arrows
      \[
      g/a : P/a \to X/a.
      \]
      Since trivial fibrations are stable by pullback, $g/a$ is a trivial fibration. This proves that the left vertical arrow of square (\ref{comsquare}) is an isomorphism.
      
      Moreover, from paragraph \ref{paragr:unfoldfunctor} and Lemma \ref{lemma:colimslice}, we deduce that the right vertical arrow of (\ref{comsquare}) can be identified with the image of $g : P \to X$ in $\Ho(\omega\Cat)$ and hence is an isomorphism.

      Finally, from Proposition \ref{prop:sliceiscofibrant}, we deduce that the
      top horizontal arrow of (\ref{comsquare}) is an isomorphism. This proves the theorem. 
    \end{proof}
    \section{Homology of 1-categories}
    \begin{paragr}
      For a functor $d : A \to \Cat$ with $A$ a small category, we denote by
      $\int_A d$ the \emph{Grothendieck construction for $d$}. We refer to
      \cite[Section 3.1]{maltsiniotis2005theorie} for a definition and basic
      properties of this construction. Recall that there is a canonical morphism
      $\int_A d \to A$ as well a canonical morphism $\int_A d~\to~\colim_A (d)$,
      and that if $d$ is the functor $a \mapsto A/a$ then these two morphisms
      can be identified via the isomorphism $\displaystyle\colim_{a \in A} A/a \simeq A$.
    \end{paragr}
    \begin{lemma}\label{lemma:homcoltho}
      Let $A$ be a small category and consider the functor
      \[
      \begin{aligned}
        A &\to \omega\Cat\\
        a &\mapsto A/ {a}.
      \end{aligned}
      \]
      The canonical map
      \[
      \hocolim_{a \in A}N_{\omega}(A/ {a}) \to N_{\omega}(\hocolim_{a \in A}A/a)
      \]
      is an isomorphism of $\Ho(\Psh{\Delta})$.
    \end{lemma}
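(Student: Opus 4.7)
The approach is to identify both sides of the canonical comparison map with the nerve $N(A)$.

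For the right-hand side, I would apply Lemma \ref{lemma:colimslice} to $\id_A : A \to A$ to obtain $\colim_{a \in A} A/a \cong A$ in $\omega\Cat$. Combined with Theorem \ref{thm:homcolfolk}, this promotes the canonical map $\hocolim_{a \in A} A/a \to A$ to an isomorphism in $\Ho(\omega\Cat)$. Applying $\overline{N}_\omega$ (Lemma \ref{lemma:nervehomotopical}) then yields $N_\omega(\hocolim_{a \in A} A/a) \simeq N(A)$ in $\Ho(\Psh{\Delta})$.

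For the left-hand side, I would use that each $A/a$ is a $1$\nbd{}category with terminal object $(a, \id_a)$: the natural transformation from $\id_{A/a}$ to the constant functor at this object descends under the nerve to a simplicial contraction of $N_\omega(A/a) = N(A/a)$. The unique morphism $N_\omega(A/{-}) \to *$ in $\Psh{\Delta}^A$, where $*$ denotes the constant diagram at the terminal simplicial set, is therefore a pointwise weak equivalence, and induces an isomorphism $\hocolim_{a \in A} N_\omega(A/a) \simeq \hocolim_{a \in A} *$ in $\Ho(\Psh{\Delta})$. The classical formula identifies $\hocolim_{a \in A} *$ with $N(A)$; one concrete verification in our setting is to exhibit $N(A/{-})$ itself as a projectively cofibrant replacement of $*$ in $\Psh{\Delta}^A$, using the explicit description $N(A/{-})_n = \coprod_{b_0 \to \cdots \to b_n} \Hom_A(b_n, {-})$ (levelwise a coproduct of representables in $\Set^A$), Lemma \ref{lemma:cofprojms} via the skeletal filtration, and the direct computation $\colim_{a \in A} N(A/a) = N(A)$ using $\colim_{a \in A} \Hom_A(b, a) = *$.

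To finish, I would check that under these two identifications the canonical comparison map of Paragraph \ref{paragr:canmaphocolim} corresponds to the natural map $\colim_{a \in A} N_\omega(A/a) \to N_\omega(\colim_{a \in A} A/a)$ arising from functoriality of $N_\omega$ applied to the cocones $(A/a \to A)_a$, which by the explicit computation above is the identity of $N(A)$, hence an isomorphism.

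The main obstacle is this final compatibility check: the canonical map of Paragraph \ref{paragr:canmaphocolim} is defined abstractly through the units and counits of the adjunctions $\hocolim_A \dashv \overline{k}$, so verifying that it agrees with the explicit natural map requires carefully unravelling these definitions and invoking the naturality of the various universal properties involved.
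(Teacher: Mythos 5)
Your proposal is correct, but it takes a genuinely different route from the paper. The paper's proof juggles three localizers: it invokes Thomason's homotopy colimit theorem (the Grothendieck construction $\int_A$ computes Thomason homotopy colimits in $\Cat$), the contractibility of the nerves $N(A/a)$, and Gagna's theorem that $N_{\omega}$ induces an equivalence of prederivators for the Thomason equivalences, and then compares folk and Thomason homotopy colimits through their common comparison maps to $\colim_A^{\omega\Cat}$. Your argument avoids all of this external input: the right-hand side is handled by Theorem \ref{thm:homcolfolk} together with Lemmas \ref{lemma:colimslice} and \ref{lemma:nervehomotopical}, and the left-hand side by exhibiting $N(A/{\shortminus})$ as a projectively cofibrant object of $\Psh{\Delta}^A$ with colimit $N(A)$ --- the exact simplicial analogue of Proposition \ref{prop:sliceiscofibrant}; your observation that the nondegenerate simplices form, levelwise, a coproduct of representables (degeneracy of a simplex of $N(A/a)$ depends only on the underlying chain in $A$) is what makes the skeletal filtration argument work with Lemma \ref{lemma:cofprojms}. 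The compatibility you flag as the main obstacle is a real obligation --- knowing that source and target are both isomorphic to $N(A)$ does not by itself make the given map invertible --- but it reduces to the commutativity of the square
\[
\begin{tikzcd}
\displaystyle\hocolim_{a \in A}N_{\omega}(A/a)\ar[r]\ar[d] & N_{\omega}(\displaystyle\hocolim_{a \in A}A/a)\ar[d]\\
\displaystyle\colim_{a \in A}N_{\omega}(A/a)\ar[r] & N_{\omega}(\displaystyle\colim_{a \in A}A/a)
\end{tikzcd}
\]
relating the derived exchange map of \ref{paragr:canmaphocolim} to the underived one via the canonical maps of \ref{paragr:hocolim}; this is a purely formal chase with the units and counits of $\hocolim_A\dashv\overline{k}$ and $\colim_A\dashv k$ (two applications of naturality plus the triangle identities), and it does go through. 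Once the square commutes, three of its sides are isomorphisms by your computations and the fourth follows. In sum, your proof is complete modulo that routine verification, and it is more self-contained than the paper's, at the price of redoing the cofibrancy analysis in $\Psh{\Delta}^A$ and the exchange-map bookkeeping that the paper outsources to Thomason's and Gagna's theorems.
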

    \begin{proof}
      The proof is long and we divide it in several parts. Recall that we
      consider $\Cat$ as a full subcategory of $\omega\Cat$ (see Remark
      \ref{rem:1cat}). Given a functor $d : A \to \Cat$, we still denote by $d$ the functor obtained by post-composition
      \[
      \begin{tikzcd}
        A \ar[r,"d"] &\Cat \ar[hook,r] &\omega\Cat.
      \end{tikzcd}
      \]
      \begin{description}
      \item[Preliminaries]
        We say that a morphism $f : X \to Y$ of $\omega\Cat$ is a
        \emph{Thomason equivalence} when $N_{\omega}(f)$ is a weak equivalence
        of simplicial sets. We denote by $\W_{\omega}^{\mathrm{Th}}$ the class
        of Thomason equivalences. In order to avoid any confusion, we use the
        notation $\Ho^{\mathrm{Th}}(\omega\Cat)$ for the localization of
        $\omega\Cat$ with respect to the Thomason equivalences and the notation $\Ho^{\mathrm{folk}}(\omega\Cat)$ the localization of $\omega\Cat$ with respect to $\W_{\omega}^{\mathrm{folk}}$, the weak equivalences of the folk model structure (\ref{paragr:folkms}).
        
        Similarly, we denote by $\W_1^{\mathrm{Th}}$ the class of arrows of
        $\Cat$ whose elements are the Thomason equivalences (seen as arrows of $\omega\Cat$) and by $\Ho^{\mathrm{Th}}(\Cat)$ the localization of $\Cat$ with respect to $\W_1^{\mathrm{Th}}$. Note that $\W_1^{\mathrm{Th}}$ is indeed the class of weak equivalences of the model structure on $\Cat$ considered by Thomason in \cite{thomason1980cat}.
        
          Thus, we have defined three localizers: $(\Cat,\W_1^{\mathrm{Th}})$, $(\omega\Cat,\W_{\omega}^{\mathrm{Th}})$ and $(\omega\Cat,\W_{\omega}^{\mathrm{folk}})$. We have already seen that $(\omega\Cat,\W_{\omega}^{\mathrm{folk}})$ has homotopy colimits and from the existence of the Thomason model structure on $\Cat$ \cite{thomason1980cat}, we deduce that $(\Cat,\W_1^{\mathrm{Th}})$ has homotopy colimits. Although the existence of a model structure on $\omega\Cat$ with the Thomason equivalences as weak equivalences is not established (see \cite{ara2014vers} though), we will see later in this proof that $(\omega\Cat,\W_{\omega}^{\mathrm{Th}})$ has homotopy colimits. In order to explicitly distinguish the homotopy colimits in these three localizers, we use the self-explanatory notations:
          \[
          \hocolim_A^{\Cat,\mathrm{Th}},\quad \hocolim_A^{\omega\Cat,\mathrm{Th}} \text{ and } \hocolim_A^{\omega\Cat,\mathrm{folk}}.
          \]
          Similarly, we use the notations
          \[
          \colim_A^{\Cat} \text{ and } \colim_A^{\omega\Cat}
          \]
          to distinguish colimits in $\Cat$ and $\omega\Cat$.
          
    \item[Thomason's homotopy colimit theorem]
      Let us recall an important result due to Thomason
      \cite{thomason1979homotopy} (see also \cite[section
      3.1]{maltsiniotis2005theorie}): the functor \[\int_A : \Cat^A \to \Cat\]
      preserves the Thomason equivalences and the induced functor \[\overline{\int_A} : \Ho^{\mathrm{Th}}(\Cat^A) \to \Ho^{\mathrm{Th}}(\Cat)\] is a left adjoint to the diagonal functor (\ref{paragr:diagfun})
      \[\overline{k} : \Ho^{\mathrm{Th}}(\Cat) \to \Ho^{\mathrm{Th}}(\Cat^A).\] Hence, there is a canonical isomorphism of functors \[\overline{\int_A}\simeq \hocolim^{\Cat,\mathrm{Th}}_A.\]

      Now, since for every object $a$ of $A$ the category $A/a$ has a terminal object, it follows from \cite[Section 1,Corollary 2]{quillen1973higher} that the canonical morphism
      \[
      p_{A/a} : A/a \to \sD_0
      \]
      to the terminal category is a Thomason equivalence. In particular, if $d : A \to \Cat$ is the functor $a \mapsto A/a$, the induced map
      \begin{equation}\label{canmapintegral}
      \int_{A}d \to \int_A k(\sD_0), 
      \end{equation}
      where $k(\sD_0)$ is the constant diagram with value $\sD_0$, is an isomorphism of $\Ho^{\mathrm{Th}}(\Cat)$. A quick computation left to the reader shows that \[\int_A k(\sD_0) \simeq A\] and that the map \eqref{canmapintegral} can be identified with the canonical map
      \[
      \hocolim_{a \in A}^{\Cat,\mathrm{Th}}A/a \to \colim_{a \in A}^{\Cat} A/a,
      \]
      which is thus an isomorphism of $\Ho^{\mathrm{Th}}(\Cat)$.
    \item[Preservation of Thomason homotopy colimits]
      From \cite[Theorem 2.4 and 6.9]{gagna2018strict}, it follows that
      $N_{\omega} : \omega\Cat \to \Psh{\Delta}$ induces an equivalence of
      prederivators (here $\omega\Cat$ is equipped with the Thomason equivalences). Concretely to us, this implies that $(\omega\Cat,\W_{\omega}^{\mathrm{Th}})$ has homotopy colimits and that for every functor $d : A \to \omega\Cat$, the canonical map
      \[
      \hocolim_A^{\Psh{\Delta}}(N_{\omega}(d)) \to N_{\omega}(\hocolim_A^{\omega\Cat,\mathrm{Th}}(d))
      \]
      is an isomorphism of $\Ho(\Psh{\Delta})$. (For the reader unfamiliar with
      the theory of prederivators, the above result follows also from the fact that 
      Gagna's result together with \cite[Theorem
      1.8]{barwick2012characterization} imply that
      $N_{\omega}$ induces an equivalence between the associated
      $\infty$\nbd{}categories in the sense of Lurie \cite{lurie2009higher}.)
            
      Similarly, the usual nerve functor for $1$-categories $N_1 : \Cat \to \Psh{\Delta}$ induces an equivalence of prederivators and from the commutativity of the triangle
      \[
      \begin{tikzcd}
        \Cat \ar[rr,"i"]\ar[dr,"N_1"'] & & \omega\Cat\ar[dl,"N_{\omega}"]\\        
        &\Psh{\Delta}&,
      \end{tikzcd}
      \]
      and the fact that $i$ preserves the Thomason equivalences (by definition), we deduce that $i$ also induces an equivalence of prederivators. Hence, for every functor $d : A \to \Cat$, the canonical map
      \[
      \hocolim_A^{\omega\Cat,\mathrm{Th}}(d)\to\hocolim_A^{\Cat,\mathrm{Th}}(d)
      \]
      is an isomorphism of $\Ho^{\mathrm{Th}}(\omega\Cat)$. Consider now the commutative square in $\Ho^{\mathrm{Th}}(\omega\Cat)$:
        \[
        \begin{tikzcd}
          \displaystyle\hocolim_{a \in A}^{\Cat,\mathrm{Th}}A/a \ar[r] \ar[d] & \displaystyle \hocolim_{a \in A}^{\omega\Cat,\mathrm{Th}}A/a \ar[d]\\
          \displaystyle\colim_{a \in A}^{\Cat}A/a \ar[r]&\displaystyle\colim_{a \in A}^{\omega\Cat}A/a.
        \end{tikzcd}
        \]
        So far we have proved that the top horizontal arrow and the left vertical arrows are isomorphisms. Since the inclusion functor $\Cat \to \omega\Cat$ preserves colimits, the bottom horizontal arrow is also an isomorphism. This implies that the right vertical arrow is an isomorphism.
      \item[Comparing Folk and Thomason homotopy colimits]From Lemma \ref{lemma:nervehomotopical}, we have that $\W_{\omega}^{\mathrm{folk}}\subseteq \W_{\omega}^{\mathrm{Th}}$. In particular, the identity functor $\omega\Cat \to \omega\Cat$ induces a functor
        \[
        \Ho^{\mathrm{folk}}(\omega\Cat) \to \Ho^{\mathrm{Th}}(\omega\Cat),
        \]
        and for every functor $d : A \to \omega\Cat$, we have a commutative triangle in $\Ho^{\mathrm{Th}}(\omega\Cat)$:
        \[
        \begin{tikzcd}[column sep=tiny]
          \displaystyle\hocolim_A^{\omega\Cat,\mathrm{folk}}(d) \ar[dr] \ar[rr]& & \displaystyle\hocolim_A^{\omega\Cat,\mathrm{Th}}(d) \ar[dl]\\
          &\displaystyle\colim_A^{\omega\Cat}(d).
        \end{tikzcd}
        \]
        In the case $d$ is the functor $a \mapsto A/a$, we have already proved that the two slanted arrows of the previous triangle were isomorphisms. Hence, the canonical map
        \[
        \hocolim_{a \in A}^{\omega\Cat,\mathrm{folk}}A/a \to \hocolim_{a \in A}^{\omega\Cat,\mathrm{Th}}A/a
        \]
        is an isomorphism. Consider now the commutative triangle induced by the nerve functor:
        \[
        \begin{tikzcd}[column sep=tiny]
          &\displaystyle\hocolim_A^{\Psh{\Delta}}(N_{\omega}(d))\ar[dl] \ar[dr]&\\
          \displaystyle N_{\omega}(\hocolim_A^{\omega\Cat,\mathrm{folk}}(d)) \ar[rr]&&\displaystyle N_{\omega}(\hocolim_A^{\omega\Cat,\mathrm{Th}}(d)).
        \end{tikzcd}
        \]
        We have already seen that the slanted arrow on the right is an isomorphism. In the case that $d$ is the functor $a \mapsto A/a$, it follows from what we have proved that the horizontal arrow is an isomorphism. Hence, the canonical map
        \[
      \hocolim_{a \in A}^{\Psh{\Delta}}N_{\omega}(A/a) \to  N_{\omega}(\hocolim_{a \in A}^{\omega\Cat,\mathrm{folk}}A/a)
      \]
      is an isomorphism. \qedhere
      \end{description}
    \end{proof}
    We can now prove the main theorem of this paper.
    \begin{theorem}\label{maintheorem}
      Let $A$ be a small category seen as an object of $\omega\Cat$. The canonical comparison map
      \[
      \kappa N_{\omega}(A) \to \sH^{\mathrm{pol}}(A)
      \]
      is an isomorphism of $\Ho(\Ch)$.
    \end{theorem}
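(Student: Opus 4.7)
The plan is to reduce Theorem \ref{maintheorem} to the contractible case already handled by Proposition \ref{prop:comparisoncontractible}, using the slice decomposition $A\simeq\colim_{a\in A}A/a$ together with the folk homotopy colimit theorem and the homotopy cocontinuity of $\LL\lambda$ and $\kappa$.

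First I would observe that, by Lemma \ref{lemma:colimslice} applied to $f=\id_A$, the canonical arrow $\colim_{a\in A}A/a\to A$ is an isomorphism, and then by Theorem \ref{thm:homcolfolk} the canonical arrow $\hocolim^{\omega\Cat,\mathrm{folk}}_{a\in A}A/a\to\colim_{a\in A}A/a$ is also an isomorphism of $\Ho(\omega\Cat)$. Since $\sH^{\mathrm{pol}}=\LL\lambda$ is the total left derived functor of a left Quillen functor, Proposition \ref{prop:leftquilcocontinuous} gives a canonical isomorphism
\[
\sH^{\mathrm{pol}}(A)\simeq\hocolim_{a\in A}\sH^{\mathrm{pol}}(A/a).
\]
On the other side, I would invoke Lemma \ref{lemma:homcoltho} to transport the folk homotopy colimit through $N_{\omega}$ into a homotopy colimit of simplicial sets, and then use that $\kappa$ is left Quillen (Lemma \ref{lemma:normalizedquillen}) and hence homotopy cocontinuous, to obtain
\[
\kappa N_{\omega}(A)\simeq\hocolim_{a\in A}\kappa N_{\omega}(A/a).
\]

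For each object $a$ of $A$, the slice $A/a$ has a terminal object $(a,\id_a)$, so Lemma \ref{lemma:terminalobject} makes it contractible, and Proposition \ref{prop:comparisoncontractible} says that the component $\kappa N_{\omega}(A/a)\to\sH^{\mathrm{pol}}(A/a)$ of the canonical comparison map is an isomorphism (identified with $\id_{\mathbb{Z}}$). By naturality of the comparison map in the $\omega$-category variable, these components assemble into a morphism of $A$-diagrams in $\Ho(\Ch)$ which is a pointwise isomorphism, hence induces an isomorphism on homotopy colimits. Combining with the two identifications above, I would then conclude that $\kappa N_{\omega}(A)\to\sH^{\mathrm{pol}}(A)$ is an isomorphism of $\Ho(\Ch)$.

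The main obstacle will be verifying that the map obtained by applying $\hocolim_A$ to the slicewise comparison maps genuinely coincides, under the above isomorphisms, with the comparison map evaluated at $A$. This is a compatibility between the canonical natural transformations $\hocolim_A\circ\,\LL F\Rightarrow\LL F\circ\hocolim_A$ of Paragraph \ref{paragr:canmaphocolim} for $F=\lambda$ and (suitably) for $F=\kappa$, the co-unit $c_{\omega}N_{\omega}\Rightarrow\id$, and the triangle of Lemma \ref{lemma:abelor}. It is purely formal, but must be laid out with care: this is essentially why the actual proof in the paper is organized around the explicit construction of the comparison map rather than a black-box appeal to derived functors.
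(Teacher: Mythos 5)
Your proposal follows essentially the same route as the paper: the slice decomposition $A\simeq\colim_{a\in A}A/a$ (Lemma \ref{lemma:colimslice}), Theorem \ref{thm:homcolfolk}, the homotopy cocontinuity of $\sH^{\mathrm{pol}}$ and of $\kappa$ transported through $N_{\omega}$ via Lemma \ref{lemma:homcoltho}, and the contractibility of the slices via Lemma \ref{lemma:terminalobject} and Proposition \ref{prop:comparisoncontractible}, assembled by naturality of the comparison map. The paper organizes exactly these ingredients into one commutative ladder of three squares and concludes by two-out-of-three, which is the compatibility check you correctly flag as the remaining formal step.
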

    \begin{proof}
      We have a commutative diagram
      \[
      \begin{tikzcd}[column sep=small]
       \displaystyle \hocolim_{a \in A}\kappa N_{\omega}(A/a) \ar[d]\ar[r]  &\displaystyle \kappa N_{\omega}(\hocolim_{a \in A}A/a) \ar[r] \ar[d] & \displaystyle\kappa N_{\omega}(\colim_{a \in A}A/a) \ar[r] \ar[d]& \kappa N_{\omega}(A)\ar[d]\\
       \displaystyle\hocolim_{a \in A}\sH^{\mathrm{pol}}(A/a) \ar[r] & \displaystyle\sH^{\mathrm{pol}}(\hocolim_{a \in A}A/a) \ar[r]& \displaystyle\sH^{\mathrm{pol}}(\colim_{a \in A}A/a) \ar[r] & \sH^{\mathrm{pol}}(A),
       \ar[phantom,from=1-1,to=2-2,"(A)",description]\ar[phantom,from=1-2,to=2-3,"(B)",description]\ar[phantom,from=1-3,to=2-4,"(C)",description]
      \end{tikzcd}
      \]
      where the vertical arrows are induced by the canonical comparison map. The goal is to show that the right vertical map of square $(C)$ is an isomorphism.

      By Lemma \ref{lemma:colimslice} the horizontal arrows of square $(C)$ are
      isomorphisms. By Theorem \ref{thm:homcolfolk}, the horizontal arrows of
      square $(B)$ are isomorphisms. By Lemma \ref{lemma:homcoltho}, Proposition
      \ref{prop:leftquilcocontinuous} and Lemma \ref{lemma:normalizedquillen}, the top
      horizontal arrow of square $(A)$ is an isomorphism, and by Proposition
      \ref{prop:leftquilcocontinuous} the bottom horizontal arrow of the same
      square is an isomorphism. Finally, from Proposition
      \ref{prop:comparisoncontractible}, Lemma \ref{lemma:terminalobject} and
      the fact that for every object $a$ of $A$ the category $A/a$ has a
      terminal object, we deduce that the left vertical arrow of square $(A)$ is
      an isomorphism. By 2-out-of-3 property for isomorphisms, this shows what
      we wanted.
    \end{proof}
    \section{Complement: a folk Theorem A}
    \begin{paragr}
      As a corollary of Theorem \ref{thm:homcolfolk}, we obtain Proposition
      \ref{prop:folkthmA} below\footnote{Since the time of writing of this
        article, the author has proved a generalization of this result, which
        can be found in \cite{guettaPHD}. However, the proof of this new result is
        completely different from the one used here and both are interesting in
        their own right.}. It is to be compare with Theorem $A$ of Quillen
      \cite[Theorem A]{quillen1973higher} and its generalization for
      $\omega$\nbd{}categories by Ara and Maltsiniotis \cite{ara2018theoreme}
      and \cite{ara2020theoreme}. However, note that in the pre-cited
      references, the weak equivalences considered are the ones induced by the
      nerve functor $N_{\omega}$ (which we called \emph{Thomason equivalences}
      in the proof of Lemma \ref{lemma:homcoltho}), whereas in the proposition below we work with the weak equivalences of the Folk model structure (\ref{paragr:defeqomegacat}).
    \end{paragr}
    \begin{proposition}\label{prop:folkthmA}
      Let
      \[
      \begin{tikzcd}
        X \ar[rr,"u"] \ar[dr,"v"'] &&Y \ar[dl,"w"] \\
        &A&
      \end{tikzcd}
      \]
      be a commutative triangle in $\omega\Cat$ and suppose that $A$ is a $1$\nbd{}category. If for every $a \in \Ob(A)$, the induced arrow
      \[
      u/a : X/a \to Y/a
      \]
      is an equivalence of $\omega$\nbd{}categories, then $u$ is also an equivalence of $\omega$\nbd{}categories.
    \end{proposition}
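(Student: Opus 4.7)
The plan is to identify, in $\Ho(\omega\Cat)$, the morphism $u$ with the homotopy colimit of the natural transformation $u/{-} : X/{-} \Rightarrow Y/{-}$, and then invoke the hypothesis to conclude the latter is an isomorphism. This suffices because $\omega\Cat$ carries the combinatorial Folk model structure (Theorem \ref{thm:folkms}), in which an arrow is a weak equivalence if and only if it becomes invertible in the homotopy category.

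First, I would apply Paragraph \ref{paragr:unfoldfunctor} to the commutative triangle $v = w\circ u$ to obtain functors $X/{-}, Y/{-} : A \to \omega\Cat$ together with a natural transformation $u/{-} : X/{-} \Rightarrow Y/{-}$ and a commutative square in $\omega\Cat$
\[
\begin{tikzcd}
\displaystyle\colim_{a \in A} X/a \ar[r] \ar[d] & X \ar[d,"u"] \\
\displaystyle\colim_{a \in A} Y/a \ar[r] & Y
\end{tikzcd}
\]
whose horizontal arrows are isomorphisms by Lemma \ref{lemma:colimslice}. Next, Theorem \ref{thm:homcolfolk} tells us that the canonical maps $\hocolim_{a \in A} X/a \to \colim_{a\in A} X/a$ and $\hocolim_{a \in A} Y/a \to \colim_{a\in A} Y/a$ are isomorphisms in $\Ho(\omega\Cat)$, and by Paragraph \ref{paragr:canmaphocolim} they are natural in the diagram, so they fit into a commutative square relating $\hocolim_A(u/{-})$ and $\colim_A(u/{-})$. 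Finally, since $u/a$ is a folk weak equivalence for every $a \in \Ob(A)$, the natural transformation $u/{-}$ is a pointwise weak equivalence, hence an isomorphism in $\Ho(\omega\Cat^A)$; applying the functor $\hocolim_A : \Ho(\omega\Cat^A) \to \Ho(\omega\Cat)$ therefore yields an isomorphism $\hocolim_{a\in A}X/a \to \hocolim_{a\in A}Y/a$.

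Assembling these pieces into the commutative rectangle in $\Ho(\omega\Cat)$
\[
\begin{tikzcd}
\displaystyle\hocolim_{a\in A}X/a \ar[r] \ar[d] & \displaystyle\colim_{a \in A} X/a \ar[r] \ar[d] & X \ar[d,"u"] \\
\displaystyle\hocolim_{a\in A}Y/a \ar[r] & \displaystyle\colim_{a \in A} Y/a \ar[r] & Y
\end{tikzcd}
\]
in which the left vertical arrow and all four horizontal arrows are isomorphisms, we conclude that $u$ is an isomorphism in $\Ho(\omega\Cat)$, hence a folk equivalence of $\omega$-categories.

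The only point that deserves explicit care is the coherence at the right-hand square, namely the identification of $\colim_A(u/{-})$, under the isomorphisms of Lemma \ref{lemma:colimslice}, with $u$ itself; but this is exactly the naturality statement recorded at the end of Paragraph \ref{paragr:unfoldfunctor}, so no additional work is required. Everything else is a formal consequence of the preceding sections, which is why I do not anticipate any real obstacle beyond bookkeeping.
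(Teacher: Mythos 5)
Your proposal is correct and follows essentially the same route as the paper: identify $u$ with $\colim_A(u/{-})$ via Lemma \ref{lemma:colimslice} and Paragraph \ref{paragr:unfoldfunctor}, use Theorem \ref{thm:homcolfolk} to replace the colimits by homotopy colimits, and conclude from the pointwise hypothesis together with the saturation of weak equivalences in a model category. The only difference is presentational (you spell out the appeal to saturation explicitly, which the paper leaves implicit).
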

    \begin{proof}
      Consider the commutative square in $\Ho(\omega\Cat)$:
      \[
      \begin{tikzcd}
        \displaystyle\hocolim_{a \in A}X/a \ar[d] \ar[r] & \displaystyle\colim_{a \in A}X/a \ar[d] \\
                \displaystyle\hocolim_{a \in A}Y/a  \ar[r] & \displaystyle\colim_{a \in A}Y/a 
      \end{tikzcd}
    \]
    where the vertical arrows are induced by the arrows
    \[
    u/a : X/a \to Y/a.
    \]
    Since we supposed that these arrows were weak equivalences of the Folk model structure, it follows that the left vertical arrow of the previous square is an isomorphism. From Theorem \ref{thm:homcolfolk}, the horizontal arrows are isomorphisms.
    
    This proves that the right vertical arrow is also an isomorphism but it follows from \ref{paragr:unfoldfunctor} and Lemma \ref{lemma:colimslice} that this arrow can be identified with the image of $u : X \to Y$ in $\Ho(\omega\Cat)$. 
    \end{proof}
\section*{Acknowledgment}
I am infinitely grateful to Georges Maltsiniotis for explaining homotopy colimits to me. His point of view on the subject has infused into me, and into this article.

    \bibliographystyle{alpha}
    \bibliography{hompolcat}

\newcommand{\etalchar}[1]{$^{#1}$}
\begin{thebibliography}{LMW10}

\bibitem[ABG{\etalchar{+}}]{polybook}
Dimitri Ara, Albert Burroni, Yves Guiraud, Philippe Malbos, Fran{\c{c}}ois
  M{\'e}tayer, and Samuel Mimram.
\newblock Polygraphs: {F}rom {R}ewriting to {H}igher {C}ategories.
\newblock In preparation.

\bibitem[AL20]{ara2019folk}
Dimitri Ara and Maxime Lucas.
\newblock The folk model category structure on strict $\omega$\nbd{}categories
  is monoidal.
\newblock {\em Theory and Applications of Categories}, 35:742--808, 2020.

\bibitem[AM14]{ara2014vers}
Dimitri Ara and Georges Maltsiniotis.
\newblock Vers une structure de cat{\'e}gorie de mod{\`e}les {\`a} la
  {T}homason sur la cat{\'e}gorie des $n$\nbd{}cat{\'e}gories strictes.
\newblock {\em Advances in Mathematics}, 259:557--654, 2014.

\bibitem[AM18]{ara2018theoreme}
Dimitri Ara and Georges Maltsiniotis.
\newblock Un th{\'e}or{\`e}me {A} de {Q}uillen pour les
  $\infty$\nbd{}cat{\'e}gories strictes {I} : la preuve simpliciale.
\newblock {\em Advances in Mathematics}, 328:446--500, 2018.

\bibitem[AM20a]{ara2020comparaison}
Dimitri Ara and Georges Maltsiniotis.
\newblock Comparaison des nerfs $n$\nbd{}cat{\'e}goriques.
\newblock {\em arXiv preprint arXiv:2010.00266}, 2020.

\bibitem[AM20b]{ara2016joint}
Dimitri Ara and Georges Maltsiniotis.
\newblock Joint et tranches pour les $\infty$\nbd{}cat{\'e}gories strictes.
\newblock {\em M{\'e}moires de la Soci{\'e}t{\'e} Math{\'e}matique de France},
  165, 2020.

\bibitem[AM20c]{ara2020theoreme}
Dimitri Ara and Georges Maltsiniotis.
\newblock Un th{\'e}or{\`e}me {A} de {Q}uillen pour les
  $\infty$\nbd{}cat{\'e}gories strictes {II} : la preuve
  $\infty$-cat{\'e}gorique.
\newblock {\em Higher Structures}, 4(1):284--388, 2020.

\bibitem[Ara19]{ara2019quillen}
Dimitri Ara.
\newblock A {Q}uillen theorem {B} for strict $\infty$\nbd{}categories.
\newblock {\em Journal of the London Mathematical Society}, 100(2):470--497,
  2019.

\bibitem[BK12]{barwick2012characterization}
Clark Barwick and Daniel~M Kan.
\newblock A characterization of simplicial localization functors and a
  discussion of {DK} equivalences.
\newblock {\em Indagationes Mathematicae}, 23(1-2):69--79, 2012.

\bibitem[Bou90]{bourn1990another}
Dominique Bourn.
\newblock Another denormalization theorem for abelian chain complexes.
\newblock {\em Journal of Pure and Applied Algebra}, 66(3):229--249, 1990.

\bibitem[DS95]{dwyer1995homotopy}
William~G. Dwyer and Jan Spalinski.
\newblock Homotopy theories and model categories.
\newblock In {\em Handbook of Algebraic Topology}, pages 73--126.
  ScienceDirect, 1995.

\bibitem[Gag18]{gagna2018strict}
Andrea Gagna.
\newblock Strict n-categories and augmented directed complexes model homotopy
  types.
\newblock {\em Advances in Mathematics}, 331:542--564, 2018.

\bibitem[GJ09]{goerss2009simplicial}
Paul~G. Goerss and John~F. Jardine.
\newblock {\em Simplicial homotopy theory}.
\newblock Birkh{\"a}user, 2009.

\bibitem[Gon12]{gonzalez2012derivability}
Beatriz~Rodr{\'\i}guez Gonz{\'a}lez.
\newblock A derivability criterion based on the existence of adjunctions.
\newblock {\em arXiv preprint arXiv:1202.3359}, 2012.

\bibitem[Gue20]{guetta2020polygraphs}
L{\'e}onard Guetta.
\newblock Polygraphs and discrete {C}onduch{\'e} $\omega$\nbd{}functors.
\newblock {\em Higher Structures}, 4(2):134--166, 2020.

\bibitem[Gue21]{guettaPHD}
L{\'e}onard Guetta.
\newblock {\em Homology of strict $\omega$\nbd{}categories}.
\newblock PhD thesis, Universit{\'e} de {P}aris, 2021.
\newblock Available on the author's webpage \url{https://www.irif.fr/~guetta/}.

\bibitem[GZ67]{gabriel1967calculus}
Peter Gabriel and Michel Zisman.
\newblock {\em Calculus of fractions and homotopy theory}.
\newblock Springer, 1967.

\bibitem[Hov07]{hovey2007model}
Mark Hovey.
\newblock {\em Model categories}.
\newblock American Mathematical Society, 2007.

\bibitem[LM09]{lafont2009polygraphic}
Yves Lafont and Fran{\c{c}}ois M{\'e}tayer.
\newblock Polygraphic resolutions and homology of monoids.
\newblock {\em Journal of Pure and Applied Algebra}, 213(6):947--968, 2009.

\bibitem[LMW10]{lafont2010folk}
Yves Lafont, Fran{\c{c}}ois M{\'e}tayer, and Krzysztof Worytkiewicz.
\newblock A folk model structure on omega-cat.
\newblock {\em Advances in Mathematics}, 224(3):1183--1231, 2010.

\bibitem[Luc17]{lucas2017cubical}
Maxime Lucas.
\newblock {\em Cubical categories for homotopy and rewriting}.
\newblock PhD thesis, Universit{\'e} Paris Diderot, 2017.

\bibitem[Lur09]{lurie2009higher}
Jacob Lurie.
\newblock {\em Higher topos theory}.
\newblock Princeton University Press, 2009.

\bibitem[Mak05]{makkai2005word}
Michael Makkai.
\newblock The word problem for computads.
\newblock {\em Available on the author’s web page \url{http://www. math.
  mcgill. ca/makkai}}, 2005.

\bibitem[Mal05]{maltsiniotis2005theorie}
Georges Maltsiniotis.
\newblock {\em La th{\'e}orie de l'homotopie de Grothendieck}.
\newblock Soci{\'e}t{\'e} Math{\'e}matique de France, 2005.

\bibitem[M{\'e}t03]{metayer2003resolutions}
Fran{\c{c}}ois M{\'e}tayer.
\newblock Resolutions by polygraphs.
\newblock {\em Theory and Applications of Categories}, 11(7):148--184, 2003.

\bibitem[M{\'e}t08]{metayer2008cofibrant}
Fran{\c{c}}ois M{\'e}tayer.
\newblock Cofibrant objects among higher-dimensional categories.
\newblock {\em Homology, Homotopy and Applications}, 10(1):181--203, 2008.

\bibitem[Qui67]{quillen1967homotopical}
Daniel~G. Quillen.
\newblock {\em Homotopical algebra}.
\newblock Springer, 1967.

\bibitem[Qui73]{quillen1973higher}
Daniel~G. Quillen.
\newblock Higher algebraic {K}\nbd{}theory: I.
\newblock In {\em Higher {K}\nbd{}theories}, pages 85--147. Springer, 1973.

\bibitem[Ste04]{steiner2004omega}
Richard Steiner.
\newblock Omega-categories and chain complexes.
\newblock {\em Homology, Homotopy and Applications}, 6(1):175--200, 2004.

\bibitem[Str87]{street1987algebra}
Ross Street.
\newblock The algebra of oriented simplexes.
\newblock {\em Journal of Pure and Applied Algebra}, 49(3):283--335, 1987.

\bibitem[Tho79]{thomason1979homotopy}
Robert~W. Thomason.
\newblock Homotopy colimits in the category of small categories.
\newblock {\em Mathematical Proceedings of the Cambridge University Press},
  85(1):91--109, 1979.

\bibitem[Tho80]{thomason1980cat}
Robert~W. Thomason.
\newblock Cat as a closed model category.
\newblock {\em Cahiers de topologie et g{\'e}om{\'e}trie diff{\'e}rentielle
  cat{\'e}goriques}, 21(3):305--324, 1980.

\end{thebibliography}
\end{document}